\numberwithin{equation}{section}
\theoremstyle{plain}
\newtheorem{main}{Theorem}
\newtheorem{mcor}[main]{Corollary}
\newtheorem{theorem}{Theorem}[section]
\newtheorem{claim}[theorem]{Claim}
\newtheorem{lemma}[theorem]{Lemma}
\newtheorem{proposition}[theorem]{Proposition}
\theoremstyle{definition}
\newtheorem{definition}[theorem]{Definition}
\newtheorem*{definition*}{Definition}
\newtheorem{remark}[theorem]{Remark}
\newtheorem{fact}[theorem]{Fact}
\newcommand{\F}{\mathbb{F}}
\newcommand{\cH}{\mathcal{H}}
\newcommand{\Tr}{\operatorname{Tr}}
\begin{document}

\title[Existential closedeness and the structure of bimodules of II$_1$ factors]
{Existential closedeness and the structure \\ of bimodules of II$_1$ factors}

\author[A. Ioana]{Adrian Ioana}
\address{Department of Mathematics, University of California San Diego, 9500 Gilman Drive, La Jolla, CA 92093, USA}\email{aioana@ucsd.edu}

\author[H. Tan]{Hui Tan}

\address{Department of Mathematics, University of California San Diego, 9500 Gilman Drive, La Jolla, CA 92093, USA}\email{hutan@ucsd.edu}

{\thanks{A.I. was partially supported by NSF DMS grants 1854074 and 2153805, and a Simons Fellowship.}}
{\thanks{H.T. was partially supported by NSF DMS grants 1854074 and 2153805.}}
\begin{abstract} 
We prove that if a separable II$_1$ factor $M$ is existentially closed, then every $M$-bimodule is weakly contained in the trivial $M$-bimodule, $\text{L}^2(M)$,  and, equivalently, every normal completely positive map on $M$ is a pointwise 2-norm limit of maps of the form $x\mapsto\sum_{i=1}^ka_i^*xa_i$, for some $k\in\mathbb N$ and $(a_i)_{i=1}^k\subset M$.
This provides the first examples of non-hyperfinite separable II$_1$ factors $M$ with the latter properties.
We also 
obtain new characterizations of $M$-bimodules which are weakly contained in the trivial or coarse $M$-bimodule and of relative amenability inside $M$.
Additionally, we give an operator algebraic presentation of the proof of the existence of existentially closed II$_1$ factors. 
While existentially closed II$_1$ factors  have property Gamma, by adapting this proof we construct non-Gamma II$_1$ factors which are existentially closed in every weakly coarse extension. 
 \end{abstract}

\maketitle

\section{Introduction and statement of main results}
In this paper, motivated by the notion of existentially closed II$_1$ factors in continuous model theory, we investigate the structure of bimodules of II$_1$ factors. The model theoretic study of II$_1$ factors was initiated by Farah, Hart and Sherman in \cite{FHS11} (see \cite{GH22} for a recent survey). Existentially closed II$_1$ factors were first considered in \cite{GHS12,FGHS13} and have since been studied in \cite{GHS12,FGHS13,Go18, AGKE20, GH21,CDI22a,CDI22b,GJKEP23}, while bimodules of II$_1$ factors  were studied from the perspective of model theory in \cite{GHS18}.
Existentially closed II$_1$ were shown to be McDuff in \cite{GHS12} and to only have approximately inner automorphisms in \cite[Proposition 3.1]{FGHS13}.  
 The original, model theoretic definition of existentially closed II$_1$ factors involves existential formulae, see \cite[Definition 1.1]{FGHS13}. In this paper, we will restrict to separable II$_1$ factors and work with the following equivalent operator algebraic definition (see, e.g., \cite[Remarks 3.2]{GH21} and the comments following \cite[Lemma 5.2]{GH22}).


\begin{definition}\label{ecdef} If $M\subset N$ are separable II$_1$ factors, we say that $M$ is {\it existentially closed} in $N$  if there is a $*$-homomorphism $\pi:N\rightarrow M^{\mathcal U}$, for a free ultrafilter $\mathcal U$ on $\mathbb N$, such that the restriction of $\pi$ to $M$ is the diagonal embedding of $M$ into $M^{\mathcal U}$.
A separable II$_1$ factor $M$ is called {\it existentially closed} if 
it is existentially closed in any separable II$_1$ factor $N$ containing it.
\end{definition}


Our first main result shows that being existentially closed has strong consequences to the structure of bimodules and completely positive maps of a II$_1$ factor.

\begin{main}\label{ec} 
Consider the following conditions for a separable II$_1$ factor $M$.
\begin{enumerate}
\item $M$ is existentially closed.
\item Every 
$M$-bimodule $\mathcal H$ satisfies $\mathcal H\subset \emph{L}^2(M^{\mathcal U})$, for some ultrafilter $\mathcal U$ on a set $I$.  Moreover, if $\mathcal H$ is separable, then we can take $I=\mathbb N$.
\item Every 
$M$-bimodule $\mathcal H$ belongs to the closure of $\emph{L}^2(M)$, in the Fell topology.
\item Every $M$-bimodule $\mathcal H$ is weakly contained in $\emph{L}^2(M)$.
\item For every normal, completely positive map $\Phi:M\rightarrow M$, there exists a sequence $(\Phi_n)\subset\mathcal P_M$ such that $\lim\limits_{n\rightarrow\infty}\|\Phi_n(x)-\Phi(x)\|_2=0$, for every $x\in M$.
Moreover, if $\Phi$ is subunital and subtracial, then we can take $(\Phi_n)\subset\mathcal S_M$.
\end{enumerate} 
Then (1) $\Rightarrow$ (2) $\Rightarrow$ (3) $\Rightarrow$ (4) $\Leftrightarrow$ (5), for every separable II$_1$ factor $M$.
\end{main}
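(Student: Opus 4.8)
The plan is to run around the cycle using the standard dictionary between normal completely positive maps and $M$-bimodules: to a normal c.p.\ map $\Phi\colon M\to M$ one attaches the $M$-bimodule $\cH_\Phi$, the separation--completion of $M\odot M$ for the form $\langle a_1\otimes b_1,a_2\otimes b_2\rangle=\tau(b_2^*\,\Phi(a_2^*a_1)\,b_1)$, with cyclic vector $\xi_\Phi=\widehat{1\otimes 1}$ satisfying $\langle x\xi_\Phi y,\xi_\Phi\rangle=\tau(\Phi(x)y)$; every cyclic $M$-bimodule arises this way, and $\bigoplus_j\cH_j\prec L^2(M)$ iff each $\cH_j\prec L^2(M)$, so that (4)$\Leftrightarrow$(5) reduces to the per-map equivalence $\cH_\Phi\prec L^2(M)\iff \Phi\in\overline{\cP_M}$ in the pointwise $\|\cdot\|_2$-topology. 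Under this correspondence the maps $x\mapsto\sum_{i=1}^k a_i^*xa_i$ in $\cP_M$ are exactly those implemented by the vector $(a_1,\dots,a_k)$ in the finite multiple $L^2(M)^{\oplus k}$ of the trivial bimodule, since $\langle x(a_i)_iy,(a_i)_i\rangle=\sum_i\tau(a_i^*xa_iy)$. In particular (5)$\Rightarrow$(4) is immediate: if $\Phi_n\to\Phi$ pointwise in $\|\cdot\|_2$ with $\Phi_n\in\cP_M$, then $\tau(\Phi_n(x)y)\to\tau(\Phi(x)y)$ for $x,y\in M$, exhibiting $\cH_\Phi$ as a matrix-coefficient limit of the multiples $L^2(M)^{\oplus k_n}$.

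For (1)$\Rightarrow$(2) I would first isolate a realization statement: \emph{every $M$-bimodule $\cH$ embeds, as an $M$-bimodule, into $L^2(N)$ for some tracial von Neumann algebra $N\supseteq M$, which may be taken a separable} II$_1$ \emph{factor when $\cH$ is separable.} Indeed $\cH$ is a normal representation of $M\bar\otimes M^{\mathrm{op}}$, hence embeds into a multiple $\bigoplus_{i\in I}L^2(M\bar\otimes M^{\mathrm{op}})$ (with $I=\N$ if $\cH$ is separable), and the standard form $L^2(M\bar\otimes M^{\mathrm{op}})$ restricted to $M\otimes 1$ and $1\otimes M^{\mathrm{op}}$ is the coarse $M$-bimodule. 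Taking $N=M*L(\F_I)$ — a II$_1$ factor, separable if $M$ is and $I$ countable — the free Haar unitaries $u_i$ produce pairwise orthogonal sub-bimodules $\overline{Mu_iM}\subset L^2(N)$, each isomorphic to the coarse bimodule by a one-line freeness computation, so $\cH\hookrightarrow\bigoplus_{i\in I}(\text{coarse})\hookrightarrow L^2(N)$. If $\cH$ is separable, existential closedness then yields a free ultrafilter $\cU$ on $\N$ and a trace-preserving embedding $N\hookrightarrow M^{\cU}$ restricting to the diagonal inclusion of $M$, which extends to an isometric $M$-bimodular embedding $L^2(N)\hookrightarrow L^2(M^{\cU})$, whence $\cH\subset L^2(M^{\cU})$. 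For a general $\cH$ the same argument applies, using the standard model-theoretic fact that a separable existentially closed $M$ is existentially closed in \emph{every} tracial von Neumann algebra containing it (with target an ultrapower over a possibly uncountable set), which one gets by exhausting $N$ by separable intermediate subalgebras.

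Next, (2)$\Rightarrow$(3)$\Rightarrow$(4). Membership in the Fell closure of $\{L^2(M)\}$ passes to sub-bimodules, and by (2) $\cH\subset L^2(M^{\cU})$, so it suffices to place $L^2(M^{\cU})$ itself in that closure. Given finitely many vectors of $L^2(M^{\cU})$, a finite $F\subset M$ and $\eps>0$, approximate the vectors in $\|\cdot\|_2$ by elements of $M^{\cU}$ and represent these by bounded sequences $(a^{(n)}_j)_n$; for $x,y\in F$ the relevant matrix coefficients are $\cU$-limits of the corresponding matrix coefficients of the ``$n$-th slice'' $(a^{(n)}_j)_j$ inside a \emph{single} copy of $L^2(M)$, so a $\cU$-large — hence nonempty — set of indices $n$ serves all the finitely many constraints simultaneously. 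This gives (3); and (3)$\Rightarrow$(4) is trivial, vectors of $L^2(M)$ being a fortiori vectors of $L^2(M)^{\oplus\infty}$.

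Finally (4)$\Rightarrow$(5), the crux. By (4), $\cH_\Phi\prec L^2(M)$, which unwinds to: for every finite $F\subset M$ and $\eps>0$ there is $\Phi'\in\cP_M$ with $|\tau((\Phi-\Phi')(x)y)|<\eps$ for all $x,y\in F$ — \emph{weak} approximation by $\cP_M$. To upgrade this to pointwise $\|\cdot\|_2$-approximation I would first scale: after dividing $\Phi$ by a large constant it becomes subunital and subtracial, and then (after a routine spectral perturbation of the $a_i$) the weakly approximating maps may be taken in $\cS_M$. The essential gain is that every $\Phi'\in\cS_M$ is a $\|\cdot\|_2$-contraction: subunitality and Kadison--Schwarz give $\Phi'(x)^*\Phi'(x)\le\Phi'(x^*x)$, and subtraciality gives $\tau(\Phi'(x^*x))\le\tau(x^*x)$. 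Using separability of $M$ one then extracts a single sequence $(\Phi_n)\subset\cS_M$ with $\|\Phi_n\|_{L^2\to L^2}\le 1$ and $\tau(\Phi_n(x)y)\to\tau(\Phi(x)y)$ for all $x,y\in M$; boundedness promotes this to $\Phi_n(x)\to\Phi(x)$ \emph{weakly} in $L^2(M)$ for each $x$. As $\cS_M$ is convex, so is $\{(\Phi'(x))_{x\in F}:\Phi'\in\cS_M\}\subset\bigoplus_{x\in F}L^2(M)$, and Mazur's theorem (weak and norm closures of a convex set agree) lets one pass to convex combinations, still in $\cS_M$, converging to $\Phi$ pointwise in $\|\cdot\|_2$; undoing the scaling gives the sequence in $\cP_M$, and the $\cS_M$-statement is precisely the subunital--subtracial case. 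I expect the two technical hearts to be exactly these: the realization step in (1)$\Rightarrow$(2) (verifying $\overline{Mu_iM}\cong$ coarse, and the passage to non-separable extensions) and this promotion from weak to $\|\cdot\|_2$ approximation in (4)$\Rightarrow$(5), whose whole point is the $L^2$-contractivity of subunital subtracial maps making Mazur applicable; everything else is bookkeeping with the cp-map/bimodule dictionary.
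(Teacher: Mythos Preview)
Your argument for (1)$\Rightarrow$(2) contains a genuine error. You claim that an $M$-bimodule $\mathcal H$ ``is a normal representation of $M\bar\otimes M^{\mathrm{op}}$'' and hence embeds into a multiple of $L^2(M\bar\otimes M^{\mathrm{op}})$, i.e., into a multiple of the coarse bimodule. This is false: commuting normal representations of $M$ and $M^{\mathrm{op}}$ yield a $*$-homomorphism out of the \emph{algebraic} tensor product $M\otimes_{\mathrm{alg}}M^{\mathrm{op}}$, but this need not extend normally to the von Neumann tensor product $M\bar\otimes M^{\mathrm{op}}$. The bimodules for which such an extension exists are exactly those contained in a multiple of the coarse bimodule, and for non-amenable $M$ the trivial bimodule $L^2(M)$ is not even \emph{weakly} contained in the coarse one. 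So your realization step already fails for $\mathcal H=L^2(M)$. The paper circumvents this by using Shlyakhtenko's $M$-valued semicircular construction: for any symmetric $M$-bimodule $\mathcal H$ one forms the tracial von Neumann algebra $N=\Gamma(M,\mathcal H)''\supset M$, which satisfies $\mathcal H\subset L^2(N)$ as $M$-bimodules, and then applies existential closedness to this $N$.

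There is also a smaller gap in your (4)$\Rightarrow$(5). You assert that ``after dividing $\Phi$ by a large constant it becomes subunital and subtracial'', but no scalar multiple need be subtracial: take $\Phi(x)=\tau(xh)\,1$ with $h\in L^1(M)_+\setminus M$, so that $\tau\circ\Phi=\tau(\cdot\,h)$ is not dominated by any multiple of $\tau$. The paper handles this with a truncation: writing $\tau\circ\Phi=\tau(\cdot\,T)$ for $T\in L^1(M)_+$, one replaces $\Phi$ by $x\mapsto\Phi(p_n x p_n)$ for projections $p_n\nearrow 1$ with $p_nTp_n\in M$, and shows this approximates $\Phi$ in the pointwise $\|\cdot\|_2$-topology. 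After that reduction, your Mazur argument for upgrading weak to norm convergence is exactly the paper's.

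Your treatments of (2)$\Rightarrow$(3), (3)$\Rightarrow$(4), and (5)$\Rightarrow$(4) agree with the paper.
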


Here, we denote by $\mathcal P_M$ the set of maps $\Phi:M\rightarrow M$ of the form $\Phi(x)=\sum_{i=1}^ka_i^*xa_i$, for some  $(a_i)_{i=1}^k\subset M$, and by $\mathcal S_M$ the set of maps $\Phi:M\rightarrow M$ of the form $\Phi(x)=\sum_{i=1}^ka_i^*xa_i$, for some  $(a_i)_{i=1}^k\subset M$ with $\sum_{i=1}^ka_i^*a_i\leq 1$ and  $\sum_{i=1}^ka_ia_i^*\leq 1$. Every  $\Phi\in\mathcal P_M$ is normal and completely positive, and every  $\Phi\in\mathcal S_M$ is normal, subunital, subtracial and completely positive. For the other terminology used in the statement of Theorem \ref{ec}, we refer to Section \ref{prelim}.

Before commenting on the proof of Theorem \ref{ec}, let us highlight
 that it leads to a new phenomenon for II$_1$ factors. Let $\mathcal A$ be the family of separable II$_1$ factors $M$ which satisfy the equivalent conditions (4) and (5) from Theorem \ref{ec}. 
If $M$ is a separable II$_1$ factor, then every $M$-bimodule weakly contains the coarse bimodule, $\text{L}^2(M)\otimes\text{L}^2(M)$ (see \cite[Proposition 2.3.2]{Po86}). On the other hand, there exists a (unique up to weak equivalence) separable $M$-bimodule, denoted $\mathcal H_{\text{max}}$, which weakly contains every $M$-bimodule. Class $\mathcal A$ consists of II$_1$ factors $M$ for which  $\mathcal H_{\text{max}}=\text{L}^2(M)$. 
If $R$ is the hyperfinite II$_1$ factor \cite{MvN43}, then any two $R$-bimodules are weakly equivalent  (see \cite[Proposition 3.1.4]{Po86}), and therefore $R\in\mathcal A$. Also,  $\mathcal A$ is closed under amplifications and inductive limits, and thus we have $M\overline{\otimes}R\in\mathcal A$, for every $M\in\mathcal A$ (see Proposition \ref{classA}).
Theorem \ref{ec} provides the first examples of non-hyperfinite separable II$_1$ factors $M\in\mathcal A$.
In fact, it implies that $\mathcal A$ contains uncountably many non-isomorphic such II$_1$ factors, since there are uncountably many non-isomorphic separable existentially closed II$_1$ factors (see \cite[Corollary 1.3]{FGHS13}).

To prove that (1) implies (2) in Theorem \ref{ec}, we use Shlyakhtenko's  $M$-valued semicircular systems \cite{Sh97}, see Section  \ref{M-valued} for the definition. This construction, which associates a tracial von Neumann algebra  $\Gamma(M,\mathcal H)''$ containing $M$ to any symmetric $M$-bimodule $\mathcal H$, allows us to realize every $M$-bimodule  as a sub-bimodule of $\text{L}^2(N)$, for a tracial von Neumann algebra $N$ containing $M$.

\begin{remark}\label{q}
We do not know if conditions (2)-(5) in Theorem \ref{ec} are equivalent for an arbitrary separable II$_1$ factor $M$. If $M$ satisfies (2) or (3), then $M$ must have property Gamma. Indeed,  Connes' spectral gap theorem \cite{Co75} implies that  if the $M$-bimodule $\text{L}^2(M)\oplus\text{L}^2(M)$ belongs to the closure of $\text{L}^2(M)$, then $M$ has property Gamma, see \cite[Theorem 3.3.1]{Po86} and Lemma \ref{connes}. However, we do not know if the equivalent conditions (4) and (5)  imply property Gamma. 

If a separable II$_1$ factor $M$ satisfies conditions (4) and (5) (i.e., $M\in\mathcal A$ in the above notation) but fails property Gamma, then $M$ must have property (T) in the sense of \cite{Co80,CJ83}. 
In particular, the free group factors $\text{L}(\F_n)$, for $n\geq 2$, do not belong to $\mathcal A$, which answers a question asked by Jesse Peterson.
To justify our claim, let $\mathcal H$ be an $M$-bimodule which weakly contains $\text{L}^2(M)$. Since $M$ satisfies (4), $\mathcal H$ is weakly contained in $\text{L}^2(M)$. Since $M$ does not have property Gamma, \cite[Proposition 3.2]{BMO19} implies that $\mathcal H$ must contain $\text{L}^2(M)$ and thus $M$ has property (T).
\end{remark}

\begin{remark}\label{q'}
Partially addressing the question posed in  Remark \ref{q} in the first version of the paper, Amine Marrakchi proved that conditions (2)-(4) are equivalent for every separable II$_1$ factor $M$ with property Gamma. More precisely,  \cite[Corollary 3.9]{Ma23} shows that if $M$ has Gamma, then for every $M$-bimodule $\mathcal H$ we have $\mathcal H\subset\text{L}^2(M^{\mathcal U})$ if and only if  $\mathcal H$ belongs to the closure of $\text{L}^2(M)$ and if and only if $\mathcal H$ is weakly contained in $\text{L}^2(M)$. Since any separable II$_1$ factor satisfying (3)  must have property Gamma, it follows that (2) $\Leftrightarrow$ (3), for arbitrary separable II$_1$ factors $M$.

Let $\mathcal B\subset \mathcal A$ be the family of separable II$_1$ factors $M$ which satisfy the equivalent conditions (2) and (3) from Theorem \ref{ec}.
Then the previous paragraph implies that $\mathcal B$ consists of all $M\in\mathcal A$ with property Gamma.   By Remark \ref{q}, if $M\in\mathcal A\setminus \mathcal B$ (and thus $M$ does not have Gamma), then $M$ has property (T). It is an open question whether there are II$_1$ factors $M\in\mathcal A$ with property (T).
\end{remark}


Another natural question is whether conditions (1) and (2) from Theorem \ref{ec} are equivalent.
Our next result shows that (2) is satisfied by the hyperfinite II$_1$ factor $R$. By \cite[Corollary 2.2]{FGHS13}, $R$ is existentially closed if and only if the Connes Embedding Problem (CEP) has a positive answer.  A negative answer to the CEP has been announced in the preprint \cite{JNVWY20}.

\begin{mcor}\label{hyper}
Let $R$ be the hyperfinite II$_1$ factor. Then every $R$-bimodule $\mathcal H$ satisfies $\mathcal H\subset\emph{L}^2(R^{\mathcal U})$, for some ultrafilter $\mathcal U$ on a set $I$. Moreover, if $\mathcal H$ is separable, then we can take $I=\mathbb N$.
\end{mcor}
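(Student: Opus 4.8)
The plan is to re-run the proof of the implication ``(1) $\Rightarrow$ (2)'' of Theorem \ref{ec} with $M=R$, using in place of existential closedness two special features of the hyperfinite factor: every $R$-bimodule is weakly equivalent to the coarse bimodule (immediate from amenability of $R$, cf.\ \cite[Proposition 3.1.4]{Po86}), and any two trace-preserving embeddings of $R$ into a tracial ultrapower are conjugate by a unitary in that ultrapower (Connes). So let $\mathcal H$ be an $R$-bimodule. Passing to the symmetric $R$-bimodule $\mathcal H\oplus\overline{\mathcal H}\supset\mathcal H$, Shlyakhtenko's $R$-valued semicircular von Neumann algebra $N:=\Gamma(R,\mathcal H\oplus\overline{\mathcal H})''\supset R$ is defined, and by the basic properties of that construction (Section \ref{M-valued}) the bimodule $\mathcal H$ embeds into $\text{L}^2(N)$ as an $R$-sub-bimodule. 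It therefore suffices to produce, for some ultrafilter $\mathcal U$ on a set $I$, a trace-preserving embedding $N\hookrightarrow R^{\mathcal U}$ whose restriction to $R$ is the diagonal embedding; composing it with $\mathcal H\subset\text{L}^2(N)$ gives $\mathcal H\subset\text{L}^2(R^{\mathcal U})$ as $R$-bimodules.

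For this it is enough to show that every separable subalgebra of $N$ is Connes-embeddable: a standard saturation argument then gives a trace-preserving embedding $N\hookrightarrow R^{\mathcal U}$ for a suitable $\mathcal U$ on a set $I$, and, the image of $R$ being a hyperfinite subalgebra of $R^{\mathcal U}$, Connes' uniqueness theorem lets us correct this embedding by an inner automorphism of $R^{\mathcal U}$ so that it restricts to the diagonal on $R$. To get the Connes-embeddability, note that $\mathcal H\oplus\overline{\mathcal H}$ is weakly contained in the coarse bimodule $\mathcal K:=\text{L}^2(R)\otimes\text{L}^2(R)$, so a weak-containment monotonicity property of the $R$-valued semicircular construction — which I expect to read off from the moment estimates in Section \ref{M-valued} — produces an embedding $N\hookrightarrow\Gamma(R,\mathcal K)^{\mathcal V}$ for some free ultrafilter $\mathcal V$. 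But $\Gamma(R,\mathcal K)''$ is generated by $R$ together with a family of semicircular elements free from $R$, hence is a free product of $R$ with a free group factor, and is therefore Connes-embeddable (free products of Connes-embeddable tracial von Neumann algebras being Connes-embeddable); consequently every separable subalgebra of $\Gamma(R,\mathcal K)^{\mathcal V}$, and hence of $N$, is Connes-embeddable. Finally, if $\mathcal H$ is separable then so is $N$, and a separable Connes-embeddable tracial von Neumann algebra embeds into $R^{\mathcal U}$ for a free ultrafilter $\mathcal U$ on $\mathbb N$; correcting this via Connes' uniqueness theorem yields $\mathcal H\subset\text{L}^2(R^{\mathcal U})$ with $I=\mathbb N$.

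I expect the crux to be the middle step: establishing the weak-containment monotonicity of the $R$-valued semicircular construction and then forcing every embedding in the resulting chain to respect the distinguished copy of $R$ — which is exactly what Connes' uniqueness theorem for the hyperfinite factor supplies, and what has no analogue for a general separable II$_1$ factor. (A posteriori there is also a one-line argument, via the results discussed after Theorem \ref{ec}: $R\in\mathcal A$ by \cite[Proposition 3.1.4]{Po86}, $R$ has property Gamma, and hence $R$ satisfies condition (2) by \cite[Corollary 3.9]{Ma23}; see Remark \ref{q'}.)
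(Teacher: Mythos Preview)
Your proposal is correct and follows essentially the same route as the paper: symmetrize $\mathcal H$, embed it in $\text{L}^2(\Gamma(R,\mathcal H)'')$, use that hyperfiniteness gives weak containment in the coarse bimodule, apply the weak-containment monotonicity of the semicircular construction (this is exactly Theorem~\ref{bimod}, landing in an ultrapower of $\Gamma(R,\mathcal K\otimes\ell^2 S)''=R*\text{L}(\mathbb F_S)$ rather than $\Gamma(R,\mathcal K)''$, a harmless imprecision in your sketch), and finish via Connes-embeddability of $R*\text{L}(\mathbb F_S)$ together with Connes' uniqueness for embeddings of $R$. The only organizational difference is that the paper threads the diagonal-on-$R$ condition explicitly through a chain of iterated ultrapowers (via a product-of-ultrafilters fact), whereas you discard it and recover it once at the end by Connes' uniqueness; both work.
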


Corollary \ref{hyper} is a consequence of the following result on (approximate) embeddings of Shlyakhtenko's $M$-valued semicircular systems \cite{Sh97}. It can be alternatively deduced by using that $R$ satisfies condition (4) from Theorem \ref{ec} and that conditions (2) and (4) are equivalent for II$_1$ factors with property Gamma by \cite[Corollary 3.9]{Ma23}.

\begin{main}\label{bimod}
Let $(M,\tau)$ be a tracial von Neumann algebra and $\mathcal H,\mathcal K$ be two symmetric $M$-bimodules  such that $\mathcal H$ is weakly contained in $\mathcal K$.
Then there exists a trace preserving $*$-homomorphism $$\pi:\Gamma(M,\mathcal H)''\rightarrow (\Gamma(M,\mathcal K\otimes\ell^2S)'')^{\mathcal U},$$ for some ultrafilter $\mathcal U$ on a set $I$ and a set $S$, whose restriction to $M$ is the diagonal embedding of $M$ into $(\Gamma(M,\mathcal K\otimes\ell^2S)'')^{\mathcal U}$. 
Moreover, if $M$ and $\mathcal H$ are separable, then we can take $I=S=\mathbb N$.
\end{main}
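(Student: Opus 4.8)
The plan is to show directly that weak containment $\mathcal H\prec\mathcal K$ translates, after passing through Shlyakhtenko's functor $\Gamma(M,-)''$, into the existential-closedness-type statement of the theorem. Recall that $\Gamma(M,\mathcal H)''$ is generated by $M$ together with the ``semicircular'' operators $s(\xi)$, $\xi$ in a real form of $\mathcal H$, whose mixed moments against elements of $M$ are governed by noncrossing pair partitions weighted by the $M$-bimodular inner products; in particular, $\tau\bigl(m_0 s(\xi_1)m_1s(\xi_2)\cdots m_{n-1}s(\xi_n)m_n\bigr)$ is a sum over noncrossing pairings of iterated expressions of the form $\langle \xi_i, m\,\xi_j\rangle$ paired with traces of products of the $m_k$'s. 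The key point is that each such moment is a $*$-algebraic universal expression in finitely many vectors of $\mathcal H$ and finitely many elements of $M$, and depends on the bimodule $\mathcal H$ only through finitely many inner products $\langle \xi_i, a\,\xi_j\rangle$ with $a\in M$.

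The first step is to fix a countable (or, in the non-separable case, arbitrary) generating set of real vectors $(\xi_i)$ for $\mathcal H$ and, using $\mathcal H\prec\mathcal K$, to approximate the relevant inner products $\langle \xi_i, a\,\xi_j\rangle$ by inner products of finitely many vectors in $\mathcal K\otimes\ell^2 S$ — here the auxiliary Hilbert space $\ell^2 S$ is built in precisely so that, by amplifying $\mathcal K$, one can realize simultaneously arbitrarily good approximations to all the needed moments (weak containment gives, for each finite set $F\subset M$ and each $\eps>0$, vectors $\eta_1,\dots,\eta_n$ in a finite amplification of $\mathcal K$ with $|\langle\xi_i,a\xi_j\rangle-\langle\eta_i,a\eta_j\rangle|<\eps$ for all $a\in F$; taking $S$ large enough and a net/sequence of such approximations indexed by $(F,\eps)$ packages them into a single amplification $\mathcal K\otimes\ell^2 S$). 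The second step is then to define, for each such approximation, a map sending $m\mapsto m$ and $s(\xi_i)\mapsto s(\eta_i)$ into $\Gamma(M,\mathcal K\otimes\ell^2 S)''$, and to check — using the moment formula — that it is approximately trace-preserving and approximately multiplicative on the dense $*$-subalgebra generated by $M$ and the $s(\xi_i)$, with errors controlled by the $\eps$'s above. The third step assembles these approximate embeddings into an honest trace-preserving $*$-homomorphism $\pi:\Gamma(M,\mathcal H)''\to(\Gamma(M,\mathcal K\otimes\ell^2 S)'')^{\mathcal U}$ by the standard ultraproduct trick: index the approximations by a directed set whose ultralimit kills all the errors, so that the induced map on the ultraproduct is a genuine $*$-homomorphism preserving the trace, and automatically restricts to the diagonal embedding on $M$ since there $m\mapsto m$ exactly.

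The main obstacle, and the step requiring the most care, is the passage from ``approximating the defining inner products'' to ``approximating all mixed moments of arbitrarily high order'': one must verify that the moment formula for $\Gamma(M,-)''$ is jointly continuous in the inner-product data in a way that is uniform enough on each fixed word length, so that controlling finitely many inner products up to $\eps$ controls the finitely many moments entering a fixed finite portion of the $*$-algebra up to an error $o(1)$ as $\eps\to0$. This is where the explicit combinatorial (noncrossing-pairing) description of the moments is essential — it makes each moment a finite polynomial in the inner products $\langle\xi_i,a\xi_j\rangle$ and the traces $\tau(\text{word in }M)$, hence manifestly continuous — and it is also the place where one must be careful that the vectors $\eta_i$ produced by weak containment can be chosen with uniformly bounded norm (which they can, since $\|\xi_i\|$ is itself one of the inner products being approximated). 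For the separable ``Moreover'' statement one simply runs the argument with a fixed countable generating set and a sequence (rather than a net) of approximations indexed by an increasing exhaustion of $M$ by finite sets together with $\eps=1/n$, so that $I=S=\mathbb N$ suffices; Corollary \ref{hyper} then follows by taking $M=R$, $\mathcal H$ arbitrary and $\mathcal K=\mathrm{L}^2(R)$, since every $R$-bimodule is weakly contained in $\mathrm{L}^2(R)$ and $\Gamma(R,\mathrm{L}^2(R)\otimes\ell^2 S)''$ is again a (hyperfinite, hence $R^{\mathcal U}$-embeddable after amplification) tracial von Neumann algebra containing $R$.
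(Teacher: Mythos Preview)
Your overall strategy---approximate the bimodule data, feed the approximations into $\Gamma(M,-)''$, and take an ultraproduct---matches the paper's, but there is a genuine gap at the heart of your continuity argument. You assert that ``each moment is a finite polynomial in the inner products $\langle\xi_i,a\xi_j\rangle$ and the traces $\tau(\text{word in }M)$''. This is true for Voiculescu's scalar-valued free Gaussian functor, but it is \emph{false} in Shlyakhtenko's $M$-valued setting: the annihilation operator produces the $M$-valued inner product $\langle\xi,\eta\rangle_M\in M$, and these get \emph{nested}. Already for a word of length four one finds, with $\Phi(x)=\langle\xi,x\xi\rangle_M$,
\[
\tau\bigl(s(\xi)\,a\,s(\xi)\,b\,s(\xi)\,c\,s(\xi)\,d\bigr)=\tau\bigl(\Phi(a)\,b\,\Phi(c)\,d\bigr)+\tau\bigl(\Phi(a\,\Phi(b)\,c)\,d\bigr),
\]
and the second term is not a polynomial in scalar quantities $\langle x\xi y,\xi\rangle$: it requires knowing $\Phi(b)$ as an \emph{element of $M$}. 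Weak containment only gives you $\langle x\eta_i y,\eta_j\rangle\approx\langle x\xi_i y,\xi_j\rangle$, i.e.\ weak approximation of the associated completely positive maps, and a weakly convergent net $\Phi_i\to\Phi$ does \emph{not} in general satisfy $\tau(\Phi_i(a\Phi_i(b)c)d)\to\tau(\Phi(a\Phi(b)c)d)$. A related issue: controlling the Hilbert-space norms $\|\eta_i\|$ does not control the operator norms $\|s(\eta_i)\|\le 2\|\langle\eta_i,\eta_i\rangle_M\|^{1/2}$, which you need uniformly bounded to land in the ultraproduct.

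The paper closes both gaps by first decomposing $\mathcal H=\bigoplus_j\overline{M\xi_jM}$ into cyclic pieces (so one vector at a time), and then invoking Lemma~\ref{trivialbimod}: for a subtracial $\xi$ with associated c.p.\ map $\Phi$, weak containment $\mathcal H_\Phi\subset_{\text{weak}}\mathcal K$ is upgraded---via Lemma~\ref{subtracial} and a convex-combination trick---to a net of \emph{subunital, subtracial} coefficients $\Phi_i$ of $\mathcal K^{\oplus\infty}$ with $\|\Phi_i(x)-\Phi(x)\|_2\to0$. Subunitality gives $\|s(\xi_{\Phi_i})\|\le2$ uniformly, and $\|\cdot\|_2$-convergence of the $\Phi_i$ is exactly what makes the nested expressions in Lemma~\ref{trace} converge (Lemma~\ref{ultraproduct}). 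The cyclic pieces are then reassembled via the amalgamated free product identity $\Gamma(M,\mathcal H)''=*_{M,j}\Gamma(M,\mathcal H_j)''$. Your outline becomes correct once you insert this upgrade from weak to $\|\cdot\|_2$ convergence of c.p.\ maps; without it the ``joint continuity'' you appeal to simply does not hold.
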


Theorem \ref{bimod} is of independent interest. In addition to Corollary \ref{hyper}, Theorem \ref{bimod} and its proof lead to characterizations of weak containment in the trivial or coarse bimodule and of relative amenability for inclusions, in the sense of Ozawa and Popa \cite{OP07}. 

\begin{mcor}\label{relamen}
    Let $(M,\tau)$ be a separable tracial von Neumann algebra, $\mathcal H$  a separable $M$-bimodule, $P,Q\subset M$ von Neumann subalgebras,  and $\mathcal U$  a free ultrafilter on $\mathbb N$. Denote $\widehat{M}=M\overline{\otimes}\emph{L}(\mathbb F_\infty)$,  $\overline{M}=M*\emph{L}(\mathbb F_\infty)$, and $\widetilde M=M*_Q(Q\overline{\otimes}\emph{L}(\mathbb Z))$.
    Then the following hold:
    \begin{enumerate}
       
        \item 
     $\mathcal H$ is weakly contained in $\emph{L}^2(M)$ if and only if $\mathcal H\subset\emph{L}^2(\widehat{M}^{\mathcal U})$.
        \item  $\mathcal H$ is weakly contained in $\emph{L}^2(M)\otimes \emph{L}^2(M)$ if and only if $\mathcal H\subset\emph{L}^2(\overline{M}^{\mathcal U})\ominus\emph{L}^2(M^{\mathcal U})$.
         \item  $P$ is amenable relative to $Q$ inside $M$ if and only if there exists $x\in P'\cap \widetilde M^{\mathcal U}$ such that $\emph{E}_{M^{\mathcal U}}(x)=0$ and $\emph{E}_{M^{\mathcal U}}(x^*x)=1$.  
         \item The $M$-bimodule $\emph{L}^2(M)\otimes_Q\emph{L}^2(M)$ is weakly contained in $\emph{L}^2(M)$ if and only if there exists a  $*$-homomorphism $\pi:\widetilde M\rightarrow {\widehat{M}}^{\mathcal U}$ whose restriction to $M$ is the diagonal embedding of $M$. 
    \end{enumerate}
\end{mcor}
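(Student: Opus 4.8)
The plan is to derive each part of Corollary \ref{relamen} from Theorem \ref{bimod} and its proof, by choosing the bimodule $\mathcal{K}$ appropriately in each case and identifying $\Gamma(M,\mathcal K\otimes \ell^2 S)''$ with one of the explicit algebras $\widehat M$, $\overline M$ or $\widetilde M$.

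\textbf{Parts (1) and (2): trivial and coarse bimodules.}
First I would record the basic identifications of Shlyakhtenko's construction. Taking $\mathcal K = \text{L}^2(M)$ (the trivial/identity $M$-bimodule), the $M$-valued semicircular von Neumann algebra $\Gamma(M, \text{L}^2(M)\otimes \ell^2\mathbb N)''$ is canonically $M\overline{\otimes}\text{L}(\mathbb F_\infty)=\widehat M$, since a semicircular system indexed by $\mathbb N$ with trivial coefficient bimodule generates a free group factor tensored with $M$; this is exactly the kind of identification established in Section \ref{M-valued}. Taking $\mathcal K = \text{L}^2(M)\otimes \text{L}^2(M)$ (the coarse $M$-bimodule), one instead has $\Gamma(M,(\text{L}^2(M)\otimes\text{L}^2(M))\otimes\ell^2\mathbb N)'' = M * \text{L}(\mathbb F_\infty)=\overline M$, with $\text{L}^2(\overline M)\ominus\text{L}^2(M)$ playing the role of the ``generated'' part containing the image bimodule; the orthocomplement of $\text{L}^2(M^{\mathcal U})$ appears because one must delete the copy of $M$ itself inside the free product. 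For the forward direction of (1): if $\mathcal H \prec \text{L}^2(M)$, apply Theorem \ref{bimod} with $\mathcal K = \text{L}^2(M)$ to get a trace-preserving $*$-homomorphism $\Gamma(M,\mathcal H)'' \to \widehat M^{\mathcal U}$ restricting to the diagonal embedding on $M$; restricting this to the sub-bimodule $\mathcal H \subset \text{L}^2(\Gamma(M,\mathcal H)'')$ and composing with the inclusion $\text{L}^2(\widehat M)^{\mathcal U}\subset \text{L}^2(\widehat M^{\mathcal U})$ (valid since $\widehat M$ has property Gamma, being McDuff) embeds $\mathcal H$ into $\text{L}^2(\widehat M^{\mathcal U})$ as an $M$-bimodule. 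The same argument with $\mathcal K=\text{L}^2(M)\otimes\text{L}^2(M)$ gives (2), landing in $\text{L}^2(\overline M^{\mathcal U})\ominus \text{L}^2(M^{\mathcal U})$. For the converse directions: a sub-bimodule of $\text{L}^2(\widehat M^{\mathcal U})$ is weakly contained in $\text{L}^2(\widehat M^{\mathcal U})$ as an $M$-bimodule, and since $\widehat M = M\overline{\otimes}\text{L}(\mathbb F_\infty)$ with $\text{L}(\mathbb F_\infty)$ amenable-bimodule-wise over $\mathbb C$... more precisely, $\text{L}^2(\widehat M)$ as an $M$-bimodule is a direct integral/sum of copies of $\text{L}^2(M)$ (since $\text{L}^2(\text{L}(\mathbb F_\infty))$ as a trivial-action space just multiplies dimension), so $\text{L}^2(\widehat M^{\mathcal U})$ is weakly contained in $\text{L}^2(M)$ — hence so is $\mathcal H$. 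For (2), $\text{L}^2(\overline M)\ominus\text{L}^2(M)$ decomposes as a direct sum of $M$-bimodules each of which is a sub-bimodule of a multiple of the coarse bimodule (the reduced words all begin or end with a letter from $\text{L}(\mathbb F_\infty)$, making the left and right $M$-actions ``independent''), so weak containment passes to $\mathcal H$.

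\textbf{Parts (3) and (4): relative amenability.}
For these I would take $\mathcal K = \text{L}^2(M)\otimes_Q \text{L}^2(M)$, the $M$-bimodule associated to the subalgebra $Q$. The key identification here is that $\Gamma(M, (\text{L}^2(M)\otimes_Q\text{L}^2(M))\otimes \ell^2 S)'' = M *_Q (Q\overline{\otimes}\text{L}(\mathbb F_{|S|})) $, and for $S=\mathbb N$ this is $\widetilde M = M *_Q (Q\overline{\otimes}\text{L}(\mathbb Z))$ up to the usual passage between $\text{L}(\mathbb F_\infty)$ and $\text{L}(\mathbb Z)$ over $Q$ (one generator suffices after taking an ultrapower, or $\text{L}(\mathbb Z)$ over $Q$ already generates enough). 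For (4), the statement ``$\text{L}^2(M)\otimes_Q\text{L}^2(M) \prec \text{L}^2(M)$ iff there is $\pi:\widetilde M \to \widehat M^{\mathcal U}$ restricting to the diagonal embedding on $M$'' is the direct translation of Theorem \ref{bimod} with $\mathcal H = \mathcal K = \text{L}^2(M)\otimes_Q\text{L}^2(M)$ for the forward direction (using the identification of $\Gamma(M,\mathcal H)''$ with $\widetilde M$ and of $\Gamma(M,\text{L}^2(M)\otimes\ell^2\mathbb N)''$ with $\widehat M$); the converse uses that the existence of such a $\pi$ forces the $M$-bimodule $\text{L}^2(\widetilde M)$, which contains $\text{L}^2(M)\otimes_Q\text{L}^2(M)$ as a sub-bimodule, to be weakly contained in $\text{L}^2(\widehat M^{\mathcal U}) \prec \text{L}^2(M)$. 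For (3), I would recall the Ozawa--Popa characterization \cite{OP07}: $P$ is amenable relative to $Q$ inside $M$ iff $\text{L}^2(M)$ (as a $P$-$M$ bimodule) is weakly contained in $\text{L}^2(M)\otimes_Q\text{L}^2(M)$, equivalently iff there is a $P$-central state on $\langle M, e_Q\rangle$ extending the trace on $M$. The vector $x \in P' \cap \widetilde M^{\mathcal U}$ with $\text{E}_{M^{\mathcal U}}(x)=0$, $\text{E}_{M^{\mathcal U}}(x^*x)=1$ is precisely a unit vector in the sub-bimodule $\text{L}^2(\widetilde M)\ominus \text{L}^2(M)$ — which as an $M$-bimodule is built from $\text{L}^2(M)\otimes_Q\text{L}^2(M)$ — that is $P$-central; its existence is equivalent, via the standard semicircular-word computation inside $\widetilde M^{\mathcal U}$ (the first semicircular generator $s$ satisfies $\text{E}_{M^{\mathcal U}}(s)=0$, $\text{E}_{M^{\mathcal U}}(s^2)=1$, and a $P$-central such element in the ultrapower corresponds to an almost-$P$-central family of vectors in $\text{L}^2(M)\otimes_Q\text{L}^2(M)$), to relative amenability of $P$ inside $M$ with respect to $Q$.

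\textbf{Main obstacle.}
I expect the principal difficulty to be the precise von Neumann algebraic identifications of $\Gamma(M,\mathcal K\otimes\ell^2 S)''$ with $\widehat M$, $\overline M$ and $\widetilde M$, together with the correct bookkeeping of which sub-bimodule of the standard form carries the image of $\mathcal H$ — in particular the appearance of the orthocomplements $\ominus\text{L}^2(M^{\mathcal U})$ in (2) and $\ominus\text{L}^2(M)$ implicitly in (3), and verifying that the complement of the obvious copy of $M$ inside the free product decomposes, as an $M$-bimodule, into (multiples of) the coarse or the $Q$-relative bimodule. The passage between $\text{L}(\mathbb F_\infty)$ (from $\ell^2 S$ with $|S|=\aleph_0$) and the single generator $\text{L}(\mathbb Z)$ in $\widetilde M$, and between $\mathcal U$ on an arbitrary index set $I$ and a free ultrafilter on $\mathbb N$, will require the separability hypotheses and the ``moreover'' clause of Theorem \ref{bimod}; these are routine but must be tracked carefully. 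The amenable-correspondence arguments in the converse directions (that $\text{L}^2$ of a free product with an amenable algebra, amalgamated or not, is weakly contained as an $M$-bimodule in the appropriate model bimodule) are standard Popa-style computations and should go through without surprises once the decompositions are in place.
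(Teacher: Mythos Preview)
Your approach is essentially the paper's own: apply Theorem~\ref{bimod} with $\mathcal K$ chosen as $\text{L}^2(M)$, $\text{L}^2(M)\otimes\text{L}^2(M)$, or $\text{L}^2(M)\otimes_Q\text{L}^2(M)$, identify $\Gamma(M,\mathcal K\otimes\ell^2 S)''$ with $\widehat M$, $\overline M$, or $\widetilde M$ via Shlyakhtenko's examples, and handle the converses by recognizing the $M$-bimodule structure of the relevant $\text{L}^2$-spaces. A few points need tightening.

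First, in part (4) you write ``$\mathcal H=\mathcal K=\text{L}^2(M)\otimes_Q\text{L}^2(M)$'' when invoking Theorem~\ref{bimod}; this is a slip. You want $\mathcal H=\text{L}^2(M)\otimes_Q\text{L}^2(M)$ and $\mathcal K=\text{L}^2(M)$ (as you in fact indicate two lines later), so that the target becomes $\Gamma(M,\text{L}^2(M)\otimes\ell^2\mathbb N)''=\widehat M$. Relatedly, note that $\widetilde M=M*_Q(Q\overline\otimes\text{L}(\mathbb Z))$ is $\Gamma(M,\text{L}^2(M)\otimes_Q\text{L}^2(M))''$ with a \emph{single} copy of the bimodule, not $\otimes\,\ell^2 S$; there is no passage ``between $\text{L}(\mathbb F_\infty)$ and $\text{L}(\mathbb Z)$'' to negotiate here.

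Second, the detour through ``$\text{L}^2(\widehat M)^{\mathcal U}\subset\text{L}^2(\widehat M^{\mathcal U})$ since $\widehat M$ is McDuff'' is unnecessary and somewhat off: a trace-preserving $*$-homomorphism $\pi:\Gamma(M,\mathcal H)''\to\widehat M^{\mathcal U}$ already extends to an $M$-bimodular isometry $\text{L}^2(\Gamma(M,\mathcal H)'')\hookrightarrow\text{L}^2(\widehat M^{\mathcal U})$, and $\mathcal H$ sits inside the source. No Gamma-type argument is needed. Also, for (1) and (2) you should first symmetrize $\mathcal H$ (replace it by $\mathcal H\oplus\overline{\mathcal H}$) before forming $\Gamma(M,\mathcal H)''$, as the paper does.

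Finally, for (3) the paper's argument is more concrete than your sketch: in the forward direction it pulls subtracial, almost $P$-central vectors $\xi_n\in\text{L}^2(M)\otimes_Q\text{L}^2(M)$ from \cite{PV11} and sets $x_n=s(\xi_n)\in\widetilde M$, checking directly that $x=(x_n)$ has the required properties; in the converse it uses that $\text{L}^2(\widetilde M)\ominus\text{L}^2(M)\cong\mathcal L\otimes_Q\text{L}^2(M)$ as $M$-bimodules (via \cite{Io12}) and then \cite{PV11} again. Your outline is compatible with this but would benefit from naming these ingredients explicitly rather than gesturing at a ``standard semicircular-word computation.''
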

\begin{remark}
Using a method from \cite{Ha84,IV14}, Marrakchi  showed that if an $M$-bimodule $\mathcal H$ is weakly contained in $\text{L}^2(M)$, then $\mathcal H\subset\text{L}^2((M\overline{\otimes}\text{L}(\mathbb Z))^{\mathcal U})$, see \cite[Theorem 2.1]{Ma23}.
This improves part (1) of Corollary \ref{relamen}.
We also note that by \cite[Theorem 3.3]{Ma23} the $M$-bimodule $\text{L}^2(M)\otimes_Q\text{L}^2(M)$ is weakly contained in $\text{L}^2(M)$ (i.e., $Q$ is {\it weakly bicentralized} in $M$, in the sense of \cite{BMO19,IM19}) if and only if $(Q'\cap M^{\mathcal U})'\cap M=Q$. 
\end{remark}

Although existential closedness is a strong property, it was noted in \cite{GHS12,FGHS13} that the class of existentially closed separable II$_1$ factors is embedding universal in the following sense:

\begin{main}[\cite{GHS12,FGHS13}]\label{construction}
Any separable II$_1$ factor $M_0$ is contained in an existentially closed separable II$_1$ factor $M$.
\end{main}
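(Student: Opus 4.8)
The plan is to present, in operator-algebraic language, the standard model-theoretic construction of an existentially closed model inside an inductive class: build an increasing chain $M_0\subseteq M_1\subseteq M_2\subseteq\cdots$ of separable II$_1$ factors, each obtained from the previous one by a single ``saturation step'', and set $M:=\overline{\bigcup_n M_n}$ (strong closure). The chain is produced by iterating the following one-step lemma: \emph{for every separable II$_1$ factor $P$ there is a separable II$_1$ factor $P^+\supseteq P$ such that every separable II$_1$ factor $N\supseteq P$ admits a trace preserving $*$-homomorphism $N\to(P^+)^{\mathcal U}$, for some free ultrafilter $\mathcal U$ on $\N$, restricting to the diagonal embedding on $P$.}

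To prove the one-step lemma I would use that, since $P$ is separable, the set of quantifier-free types of finite tuples over $P$ that are realized in some II$_1$ factor extension of $P$ is a separable metric space. Fix a countable family $(N_i,\bar b_i)_{i\in\N}$, with $N_i\supseteq P$ a separable II$_1$ factor and $\bar b_i$ a finite tuple in its unit ball, whose quantifier-free types over $P$ are dense among all such realized types; here one may always pass to a separable II$_1$ factor by replacing a separable finite von Neumann algebra generated by $P$ and a tuple by its free product with the hyperfinite factor, which leaves quantifier-free types over $P$ unchanged. Set $P^+:=*_P N_i$, the amalgamated free product over $P$ with respect to the trace preserving conditional expectations; this is again a separable II$_1$ factor. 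Now if $N\supseteq P$ is separable, write $N=W^*(P,(b_k)_{k\geq1})$ with the $b_k$ in the unit ball and let $q$ be the quantifier-free type of the sequence $(b_k)$ over $P$. Any finite part of $q$ constrains finitely many $b_k$ by finitely many conditions of the form $\|Q(\bar b,\bar a)\|_2=c$ with $\bar a\in P$; by density of the $(N_i,\bar b_i)$ and continuity of such conditions on the type space, this finite part is satisfied to arbitrary precision by some $\bar b_i\subset N_i\subseteq P^+$. Hence $q$ is approximately finitely satisfiable in $P^+$, so finitely satisfiable in $(P^+)^{\mathcal U}$, so realized there by $\aleph_1$-saturation of the tracial ultrapower; the realization is the desired $*$-homomorphism $N\to(P^+)^{\mathcal U}$ over $P$.

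Granting the lemma, put $M_{n+1}:=M_n^+$ and $M:=\overline{\bigcup_n M_n}$. Then $M$ is separable, and it is a II$_1$ factor: it has a faithful trace, it contains $M_0$, and a central $z\in M$ satisfies $E_{M_n}(z)\in Z(M_n)=\C 1$ with $E_{M_n}(z)\to z$ in $\|\cdot\|_2$, so $z\in\C 1$. To verify Definition \ref{ecdef}, let $N\supseteq M$ be a separable II$_1$ factor; since $N\supseteq M_n$ for all $n$, the construction gives trace preserving $*$-homomorphisms $\iota_n\colon N\to(M_{n+1})^{\mathcal U_n}$ that are the identity on $M_n$. Write $N=W^*(M,(b_k)_{k\geq1})$ with the $b_k$ in the unit ball; it suffices to realize in $M^{\mathcal V}$ (for a free ultrafilter $\mathcal V$ on $\N$) the quantifier-free type $q$ of $(b_k)$ over $M$, as the realization then yields a trace preserving $*$-homomorphism $N\to M^{\mathcal V}$ over $M$. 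By $\aleph_1$-saturation of $M^{\mathcal V}$ it is enough to show $q$ is approximately finitely satisfiable in $M$. Given finitely many conditions $\|Q_j((b_k)_{k\leq K},\bar a)\|_2=c_j$ of $q$ with $\bar a\in M$, approximate $\bar a$ in $\|\cdot\|_2$ by $\bar a^{(n)}\in M_n$ (say $\bar a^{(n)}=E_{M_n}(\bar a)$). Applying $\iota_n$ to $(b_k)_{k\leq K}$ gives, inside $(M_{n+1})^{\mathcal U_n}$, a tuple in the unit ball whose $Q_j$-values at the parameters $\bar a^{(n)}$ equal $\|Q_j((b_k)_{k\leq K},\bar a^{(n)})\|_2\to c_j$; by elementarity of $M_{n+1}\preceq(M_{n+1})^{\mathcal U_n}$ such a tuple already exists in the unit ball of $M_{n+1}\subseteq M$, up to an arbitrarily small error; finally, since $\bar a^{(n)}\to\bar a$ and the conditions are continuous in the parameters on bounded sets, these conditions are satisfied in $M$ with parameters $\bar a$ up to an error tending to $0$ as $n\to\infty$. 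This proves $M$ is existentially closed.

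The step I expect to be the main obstacle is the one-step lemma, precisely the point that $P^+$ can be taken \emph{separable}: up to isomorphism over $P$ there can be $2^{\aleph_0}$ separable II$_1$ factors containing $P$, so the amalgamated free product over all of them would fail to be separable. The resolution is the separability of the space of quantifier-free types of finite tuples over the separable factor $P$, which reduces that amalgamation to a countable one, together with the routine facts that a countable amalgamated free product of separable II$_1$ factors over a common subfactor is again a separable II$_1$ factor and that tracial ultrapowers over free ultrafilters on $\N$ are $\aleph_1$-saturated.
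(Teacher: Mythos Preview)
Your proof is correct and follows essentially the same strategy as the paper: a one-step saturation lemma built from a countable amalgamated free product over a dense family in the (separable) space of types/laws over $P$, iteration to a chain $M_0\subset M_1\subset\cdots$, and a final diagonal argument to show the union is existentially closed. The only notable difference is presentational: you invoke model-theoretic black boxes ($\aleph_1$-saturation, quantifier-free types, elementarity of $M_{n+1}\preceq M_{n+1}^{\mathcal U}$) at exactly the places where the paper, whose explicit aim is an operator-algebraic presentation, instead writes out concrete approximations and appeals to its elementary Lemma~\ref{hom} on homomorphisms into ultrapowers.
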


Theorem \ref{construction} was noted in \cite{FGHS13}
as a consequence of results from \cite{Us08}. Its proof consists of verifying the model-theoretic definition of existential closedness given in  \cite[Definition 1.1]{FGHS13}. 
In Section \ref{proofs}, we give an operator algebraic presentation of the proof of Theorem \ref{construction}, in which we instead directly verify Definition \ref{ecdef}. 
We hope that this 
will both make the class of existentially closed II$_1$ factors more accessible to the operator algebra community and allow to prove new results. 
In fact, by adapting 
our operator algebraic presentation of the proof of Theorem \ref{construction},
we obtain the following new result:

\begin{main}\label{non-Gamma}
Any separable II$_1$ factor $M_0$ is contained in a non-Gamma separable II$_1$ factor $M$ which satisfies the following: $M$ is existentially closed in any II$_1$ factor $N\supset M$ such that the $M$-bimodule $\emph{L}^2(N)\ominus\emph{L}^2(M)$ is weakly contained in $\emph{L}^2(M)\otimes\emph{L}^2(M)$.
\end{main}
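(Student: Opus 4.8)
The plan is to adapt the inductive construction used to prove Theorem \ref{construction}, but to build into each stage both a non-Gamma witness and a "partial existential closedness" mechanism that only needs to work against extensions $N \supset M$ whose orthocomplement bimodule $\mathrm{L}^2(N)\ominus\mathrm{L}^2(M)$ is weakly coarse. First I would recall the shape of the operator-algebraic proof of Theorem \ref{construction}: one constructs an increasing chain $M_0\subset M_1\subset M_2\subset\cdots$ of separable II$_1$ factors, where at each step one enumerates (a suitable separable family of) finite tuples together with "potential existential data" realizable in some extension, and builds $M_{n+1}$ containing $M_n$ in which all such data arising at stage $n$ are realized approximately; the union $M=\overline{\bigcup_n M_n}^{\,\mathrm{SOT}}$ is then existentially closed because any tuple in any extension $N\supset M$ lies over some $M_n$ and the relevant quantifier-free type is realized in $M_{n+1}\subset M^{\mathcal U}$. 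The key point is that to run this argument we only need, at each stage, to be able to \emph{amalgamate} $M_n$ with the relevant extension over $M_n$ inside a common separable tracial von Neumann algebra in a trace-preserving way, and Theorem \ref{bimod} (via Shlyakhtenko's $M$-valued semicircular construction $\Gamma(M,\mathcal H)''$) is exactly the tool that performs this amalgamation when the bimodule data is controlled.

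The modification is twofold. First, to force $M$ to be non-Gamma, I would interleave into the construction a second family of requirements in the spirit of the proof of Theorem \ref{construction}'s "non-Gamma variant": at infinitely many stages, fix a generating sequence of $M_n$ and adjoin to $M_{n+1}$ elements (e.g. via a free product with $\mathrm{L}(\mathbb F_\infty)$ or with an $\mathrm{L}(\mathbb Z)$ amalgamated over a suitable subalgebra, as in the constructions behind Corollary \ref{relamen}) that create a uniform spectral gap, ensuring that no central sequence of $M$ is non-trivial; since non-Gamma is a property detected by finitely many elements up to an $\epsilon$, one arranges the bookkeeping so that in the limit $M'\cap M^{\mathcal U}=\mathbb C$. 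The standard way to guarantee this: at stage $n$ embed $M_n$ into $M_n * \mathrm{L}(\mathbb F_\infty)$ (or an amalgamated variant), which already kills the central sequences of $M_n$, and check that these operations are compatible with — do not destroy — the existential-closedness requirements handled at other stages, because weak coarseness of the orthocomplement bimodule is preserved under the relevant free-product extensions. Second, and this is where the hypothesis on $N$ enters, when processing an existential requirement coming from a tuple over $M_n$, the datum we must realize lives in an extension $N\supset M_n$; we only promise to realize it \emph{when} $\mathrm{L}^2(N)\ominus\mathrm{L}^2(M_n)$ is weakly contained in the coarse bimodule $\mathrm{L}^2(M_n)\otimes\mathrm{L}^2(M_n)$. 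Under that hypothesis the amalgamated free product $M_{n+1}$ of $M_n$ and (a separable piece of) $N$ over $M_n$ can be constructed so that it again embeds into $(\Gamma(M_n, \mathcal K)'')^{\mathcal U}$ with $\mathcal K$ coarse-controlled — i.e. the "weakly coarse extension" property is an invariant of the chain — so the limit $M$ inherits: (a) existential closedness restricted precisely to those $N\supset M$ with $\mathrm{L}^2(N)\ominus\mathrm{L}^2(M)\prec \mathrm{L}^2(M)\otimes\mathrm{L}^2(M)$, and (b) failure of Gamma from the interleaved requirements. Finally one verifies the limit statement: given such an $N\supset M$, any finite tuple $\bar a\subset N$ together with its quantifier-free data over $M$ restricts to data over some $M_n$ inside an extension with weakly coarse orthocomplement (restriction of bimodules preserves weak containment in the coarse bimodule since $\mathrm{L}^2(N)\ominus\mathrm{L}^2(M_n)$ contains $\mathrm{L}^2(N)\ominus\mathrm{L}^2(M)$ as a sub-bimodule with complement $\mathrm{L}^2(M)\ominus\mathrm{L}^2(M_n)$, and a coarse bimodule over $M$ restricts to a coarse bimodule over $M_n$), hence was realized in $M_{n+1}\subset M^{\mathcal U}$, giving the required $*$-homomorphism $N\to M^{\mathcal U}$ fixing $M$.

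The main obstacle I anticipate is the \emph{compatibility} of the two families of requirements, i.e. showing that the free-product-type operations used to destroy central sequences do not interfere with the weak-coarseness bookkeeping needed for the existential-closedness steps, and vice versa. Concretely one must check that after passing to $M_n * \mathrm{L}(\mathbb F_\infty)$ (or the amalgamated variant), an extension of $M_n$ with weakly coarse orthocomplement still gives rise to an extension of $M_{n+1}$ with weakly coarse orthocomplement — this requires a bimodule computation: $\mathrm{L}^2$ of the new free product, as an $M_{n+1}$-bimodule, decomposes into pieces that are either coarse or built by induction from $\mathrm{L}^2(N)\ominus\mathrm{L}^2(M_n)$ tensored (over $M_n$) with free-product "letters", all of which remain weakly coarse. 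Making this decomposition precise, uniformly along the chain, and organizing the enumeration so that every requirement is eventually met (a standard but delicate back-and-forth bookkeeping over the separable space of tuples-with-data) is the technical heart of the argument; once that is in place the rest follows the template of Theorem \ref{construction} together with Theorem \ref{bimod}.
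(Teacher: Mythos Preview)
Your inductive skeleton for the restricted existential closedness is essentially correct and matches the paper: one builds a chain $M_0\subset M_1\subset\cdots$ where at stage $n$ one amalgamates over $M_n$ a dense family of separable extensions $N\supset M_n$ with $\mathrm{L}^2(N)\ominus\mathrm{L}^2(M_n)\subset_{\text{weak}}\mathrm{L}^2(M_n)\otimes\mathrm{L}^2(M_n)$, and one verifies that for $N\supset M$ with weakly coarse orthocomplement, the restriction $\mathrm{L}^2(N)\ominus\mathrm{L}^2(M_n)$ is again weakly coarse over $M_n$, so the data is realized in $M_{n+1}^{\mathcal U}$. Two points, however, diverge from the paper.

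First, the tool for amalgamation is not Theorem \ref{bimod} or Shlyakhtenko's construction; it is the ordinary amalgamated free product $*_{M_n}$, and the key technical input is the elementary Lemma \ref{AFP}: if each $\mathrm{L}^2(N_i)\ominus\mathrm{L}^2(B)$ is weakly coarse over $B$ then so is $\mathrm{L}^2(*_B N_i)\ominus\mathrm{L}^2(B)$. This is exactly the ``bimodule computation'' you flag as the main obstacle, and it is immediate from the Connes-tensor decomposition of the free product, together with closure of weak coarseness under $\otimes_B$. No appeal to $M$-valued semicircular systems is needed.

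Second, and more importantly, your mechanism for non-Gamma has a genuine gap. ``Interleaving free products with $\mathrm{L}(\mathbb F_\infty)$ to create uniform spectral gap'' does not by itself guarantee that the inductive limit is non-Gamma: an increasing union of non-Gamma factors can very well have Gamma, and you give no argument that the spectral gap constants are uniform along the chain. The paper avoids this entirely with a different idea. One simply replaces $M_0$ by $M_0*\mathrm{L}(\mathbb Z)$ at the outset so that $M_0$ is non-Gamma, and then the weak-coarseness requirement you are \emph{already} imposing at each step (namely $\mathrm{L}^2(M_{n+1})\ominus\mathrm{L}^2(M_n)\subset_{\text{weak}}\mathrm{L}^2(M_n)\otimes\mathrm{L}^2(M_n)$, needed for the existential-closedness half) gives non-Gamma of $M$ for free: if $M$ had Gamma, a trace-zero central unitary $u\in M'\cap M^{\mathcal U}$ must lie in $M^{\mathcal U}\ominus M_0^{\mathcal U}$ (since $M_0$ is non-Gamma), and as $u$ commutes with $M_0$ this yields $\mathrm{L}^2(M_0)\subset_{\text{weak}}\mathrm{L}^2(M)\ominus\mathrm{L}^2(M_0)\subset_{\text{weak}}\mathrm{L}^2(M_0)\otimes\mathrm{L}^2(M_0)$, i.e.\ $M_0$ is amenable, contradicting non-Gamma. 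Thus there is no ``second family of requirements'' to interleave and no compatibility issue to resolve; the non-Gamma property is a consequence of the construction rather than an additional constraint.
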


  By \cite{GHS12}, all existentially closed II$_1$ factors are McDuff and hence have property Gamma. Consequently,  a non-Gamma  II$_1$ factor $M$ cannot be  existentially closed in all larger  II$_1$ factors. Moreover, as we will see below, $M$ cannot even be existentially closed in all larger non-Gamma II$_1$ factors.
  
  These facts suggest the following question: how ``close" can a non-Gamma separable II$_1$ factor $M$ be to being existentially closed? 
To make this question more precise, denote by  $\mathcal E_M$ the family of separable II$_1$ factors $N\supset M$ in which  $M$ is  existentially closed.
  We would like to find $M$ for which $\mathcal E_M$ is as large as possible.
If $N\in\mathcal E_M$, then since $M\subset N\subset M^{\mathcal U}$, we get that $M'\cap N^{\mathcal U}=\mathbb C1$. Hence, $M'\cap N=\mathbb C1$ and $N$ is non-Gamma. 
As a consequence,  $\mathcal E_M$ is a proper subset of the set of all non-Gamma separable II$_1$ factors which contain $M$. 
In particular,  this shows that there is no non-Gamma II$_1$ factor which is existentially closed in all non-Gamma II$_1$ factors containing it.
Indeed, the separable II$_1$ factor $P=(M\overline{\otimes}\text{L}(\mathbb Z))*\text{L}(\mathbb Z)$ is non-Gamma and contains $M$, but does not belong to $\mathcal E_M$ since $M'\cap P\not=\mathbb C1$.
More generally, if $N\in\mathcal E_M$, then any II$_1$ subfactor $Q\subset M$ with $Q'\cap M^{\mathcal U}=\mathbb C1$, must satisfy that $Q'\cap N^{\mathcal U}=\mathbb C1$. This altogether shows that for any non-Gamma II$_1$ factor $M$,  inclusions of the form $M\subset N$, with  $N\in\mathcal E_M$, are significantly constrained. 

Nevertheless, Theorem \ref{non-Gamma} shows that there exist non-Gamma II$_1$ factors $M$
for which $\mathcal E_M$ is a large family which contains all II$_1$ factors $N\supset M$ such that $(\star)$ {\it the $M$-bimodule $\emph{L}^2(N)\ominus\emph{L}^2(M)$ is weakly contained in $\emph{L}^2(M)\otimes\emph{L}^2(M)$}.

\begin{remark}\label{optimal}
Condition $(\star)$ in Theorem \ref{non-Gamma} is optimal in the following sense. There are non-Gamma II$_1$ factors $M$ such that $(\star)$ holds for every $N\in\mathcal E_M$. More precisely, 
 Amine Marrakchi (personal communication) proved that if $M=\text{L}(\Gamma)$, where $\Gamma$ is a countable icc group which is biexact in the sense of Ozawa, then the $M$-bimodule  $\text{L}^2(M^{\mathcal U})\ominus\text{L}^2(M)$ is weakly contained in $\text{L}^2(M)\otimes\text{L}^2(M)$.
 In Lemma \ref{freegroup}, we give a direct proof of this fact when $M=\text{L}(\mathbb F_n)$, for $n\geq 2$.
For such a II$_1$ factor $M$, condition ($\star)$ is satisfied by any $N\in\mathcal E_M$ since the $M$-bimodule $\text{L}^2(N)\ominus\text{L}^2(M)$ is contained in $\text{L}^2(M^{\mathcal U})\ominus\text{L}^2(M)$ and thus weakly contained in $\text{L}^2(M)\otimes\text{L}^2(M)$.
\end{remark}

Our last main result provides a spectral gap characterization of Haagerup's approximation property for separable II$_1$ factors, paralleling the characterization of property (T) obtained in \cite{Ta22}.
A separable II$_1$ factor $M$ is said to have {\it Haagerup's (approximation) property} \cite{Ch83,CJ83} if there exists a sequence of completely positive maps $\Phi_n:M\rightarrow M$ such that  $\|\Phi_n(x)-x\|_2\rightarrow 0$, for every $x\in M$, and  $\Phi_n$ is subtracial ($\tau\circ\Phi_n\leq\tau$) and its extension  to $\text{L}^2(M)$ is a compact operator, for every $n$. By  \cite{BF07, OOT15}, Haagerup's property for $M$ is equivalent to the existence of a strictly mixing $M$-bimodule (see Definition \ref{mixingbimod} for this notion) that weakly contains $\text{L}^2(M)$.

\begin{main}\label{hap}
A separable II$_1$ factor $M$ has Haagerup's property if and only there exists a separable II$_1$ factor $\widetilde M$ which contains $M$ such that the $M$-bimodule $\emph{L}^2(\widetilde M)\ominus\emph{L}^2(M)$ is strictly mixing and  $M'\cap \widetilde M^{\mathcal U}\not\subset M^{\mathcal U}$, for a free ultrafilter $\mathcal U$  on $\mathbb N$.
\end{main}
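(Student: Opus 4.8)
The plan is to prove the two implications separately, using in both directions the known equivalence (from \cite{BF07,OOT15}) between Haagerup's property for $M$ and the existence of a strictly mixing $M$-bimodule weakly containing $\text{L}^2(M)$, together with the technology behind Theorem \ref{bimod}.

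For the forward direction, suppose $M$ has Haagerup's property, and let $\mathcal H$ be a strictly mixing $M$-bimodule weakly containing $\text{L}^2(M)$. By symmetrizing (replacing $\mathcal H$ by $\mathcal H\oplus\overline{\mathcal H}$, which remains strictly mixing and still weakly contains $\text{L}^2(M)$) we may assume $\mathcal H$ is symmetric and separable. Form Shlyakhtenko's $M$-valued semicircular von Neumann algebra $N_0=\Gamma(M,\mathcal H\otimes\ell^2\mathbb N)''$; by the analysis underlying the proof of Theorem \ref{bimod}, $N_0$ is a tracial von Neumann algebra containing $M$ with the property that $\text{L}^2(N_0)\ominus\text{L}^2(M)$ is a direct sum/integral of interior tensor powers $\mathcal H^{\otimes_M n}$, each of which is strictly mixing (a tensor power over $M$ of a strictly mixing bimodule is again strictly mixing, since mixing is inherited by relative tensor products), so $\text{L}^2(N_0)\ominus\text{L}^2(M)$ is strictly mixing. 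Moreover, since $\mathcal H$ weakly contains $\text{L}^2(M)$, the argument in Theorem \ref{bimod} gives a trace-preserving embedding of $\Gamma(M,\text{L}^2(M)\otimes\ell^2\mathbb N)''$ into $N_0^{\mathcal U}$ fixing $M$; but $\Gamma(M,\text{L}^2(M)\otimes\ell^2\mathbb N)''$ contains a copy of $M\overline{\otimes}\text{L}(\mathbb F_\infty)$ whose free semicircular generators commute with $M$ (they lie in the "$\text{L}^2(M)$-part", which is the coarse-like piece generating an algebra commuting with $M$ — more precisely the semicircular elements associated to the trivial bimodule $\text{L}^2(M)$ commute with $M$). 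Hence $M'\cap N_0^{\mathcal U}\not\subset M^{\mathcal U}$. Finally, cut down to a II$_1$ factor: replace $N_0$ by $\widetilde M=N_0*\text{L}(\mathbb Z)$ or amplify/take a suitable free product to arrange that $\widetilde M$ is a separable II$_1$ factor while preserving both that $\text{L}^2(\widetilde M)\ominus\text{L}^2(M)$ is strictly mixing (the free complement contributes only coarse, hence mixing, pieces) and that $M'\cap\widetilde M^{\mathcal U}\not\subset M^{\mathcal U}$ (the commuting semicirculars survive).

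For the converse, suppose such a $\widetilde M$ exists. Pick $x\in (M'\cap\widetilde M^{\mathcal U})\setminus M^{\mathcal U}$; write $x=y+z$ with $y=\text{E}_{M^{\mathcal U}}(x)\in M'\cap M^{\mathcal U}$ and $z=x-y\in M'\cap\widetilde M^{\mathcal U}$ nonzero, with $\text{E}_{M^{\mathcal U}}(z)=0$, so $z$ is a nonzero $M$-central vector in $(\text{L}^2(\widetilde M)\ominus\text{L}^2(M))^{\mathcal U}$. Represent $z$ by a sequence $(z_n)$ in the $M$-bimodule $\mathcal K:=\text{L}^2(\widetilde M)\ominus\text{L}^2(M)$ with $\|z_n\|_2$ bounded below and $\|az_n-z_na\|_2\to 0$ for every $a\in M$. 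Normalizing, this produces almost-$M$-central unit vectors in $\mathcal K$, i.e. $\mathcal K$ almost has $M$-central vectors, which is precisely the statement that $\mathcal K$ weakly contains the trivial bimodule $\text{L}^2(M)$. Since by hypothesis $\mathcal K$ is strictly mixing, we conclude that $M$ admits a strictly mixing bimodule weakly containing $\text{L}^2(M)$, hence by \cite{BF07,OOT15} has Haagerup's property.

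The main obstacle is the forward direction, specifically controlling the bimodule $\text{L}^2(N_0)\ominus\text{L}^2(M)$ for the semicircular algebra: I must verify that relative tensor powers of a strictly mixing symmetric $M$-bimodule remain strictly mixing (so that the full Fock-space decomposition of $\Gamma(M,\mathcal H)''$ stays in the strictly mixing class), and that enriching $\mathcal H$ to weakly contain $\text{L}^2(M)$ before applying the Fock construction produces honest $M$-central elements in the ultrapower — i.e. that the embedding from Theorem \ref{bimod} can be taken to carry the $M$-commuting semicirculars of the $\text{L}^2(M)$-summand to elements of $M'\cap N_0^{\mathcal U}$ not absorbed by $M^{\mathcal U}$. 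The passage from the tracial algebra $N_0$ to a genuine II$_1$ factor $\widetilde M$ while preserving strict mixing of the complement bimodule is routine via free products with $\text{L}(\mathbb F_\infty)$ (cf. the use of $\widehat M$ and $\overline M$ in Corollary \ref{relamen}), since a free product contributes only coarse bimodules to the complement and the coarse bimodule is strictly mixing.
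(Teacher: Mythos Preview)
Your proposal is correct, and the converse direction is essentially identical to the paper's argument (the paper makes explicit the step, implicit in your sketch, that factoriality of $M$ forces $\text{E}_M(zz^*)$ to be scalar so that the normalized $\zeta_n$ genuinely satisfy $\langle x\zeta_n,\zeta_n\rangle\to\tau(x)$). The forward direction is also correct and rests on the same core idea, but is packaged more circuitously than the paper's proof.

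The paper symmetrizes via $\mathcal K=\mathcal H\otimes_M\overline{\mathcal H}$ rather than $\mathcal H\oplus\overline{\mathcal H}$; this has the advantage that the almost-tracial subtracial vectors $\xi_n=\eta_n\otimes_M\overline{\eta}_n$ are automatically $J$-invariant. It then takes $\widetilde M=\Gamma(M,\mathcal K)''$ directly, citing \cite[Theorem 5.1]{KV15} to see this is already a II$_1$ factor (so no free-product adjustment is needed), and simply computes $\|xs(\xi_n)-s(\xi_n)x\|_2=\|x\xi_n-\xi_n x\|\to 0$ to exhibit $(s(\xi_n))\in (M'\cap\widetilde M^{\mathcal U})\setminus M^{\mathcal U}$. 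Your route through Theorem~\ref{bimod} unwinds to exactly this computation once one opens Lemma~\ref{ultraproduct}: the embedding sends the $M$-commuting semicircular of $\Gamma(M,\text{L}^2(M))''$ to a sequence $(s(\xi_{\Phi_i}))$ with $\Phi_i\to\text{id}$, which is precisely the paper's $(s(\xi_n))$. So the paper's version is more direct and avoids both the detour through Theorem~\ref{bimod} and the free product with $\text{L}(\mathbb Z)$, while your version is a legitimate alternative that leans on the machinery already built earlier in the paper and trades the external citation to \cite{KV15} for an elementary factoriality argument.
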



\subsection*{Organization of the paper} Besides the introduction, this paper consists of three other sections.  
Sections \ref{prelim} and \ref{M-valued} are devoted to  von Neumann algebras preliminaries and $M$-valued semicircular systems, respectively. In Section \ref{proofs}, we present the proofs of the results stated in the introduction.

\subsection*{Acknowledgements} We are very grateful to Changying Ding, Daniel Drimbe, Ilijas Farah, Isaac Goldbring, David Jekel, Srivatsav Kunnawalkam Elayavalli, Amine Marrakchi, Jesse Peterson, and Sorin Popa for many comments and conversations. The first named author would also like to thank Stuart White for his kind hospitality at the University of Oxford where this work was completed.

\section{Preliminaries}\label{prelim}
In this section, we record some terminology and basic results concerning tracial von Neumann algebras, bimodules and completely positive maps. We refer  to \cite{AP18} for more information.

\subsection{Tracial von Neumann algebras}
A tracial von Neumann algebra is a pair $(M,\tau)$ consisting of a von Neumann algebra $M$ and a distinguished normal tracial state $\tau:M\rightarrow\mathbb C$. In order to emphasize the dependence of $\tau$ on $M$, we will sometimes write $\tau_M$ instead of $\tau$.
We assume that all inclusions of tracial von Neumann algebras $N\subset M $ are trace preserving, i.e., satisfy $\tau_N={(\tau_M)}_{|N}$.
We let $\|x\|_2=\tau(x^*x)^{1/2}$, for every $x\in M$, and denote by $\text{L}^2(M)$ the Hilbert space obtained by taking the closure of $M$ with respect to $\|\cdot\|_2$. 
For an ultrafilter $\mathcal U$ on a set $I$, we denote by $M^\mathcal U$ the tracial ultraproduct von Neumann algebra endowed with the trace given by $\tau_{\mathcal U}(x)=\lim_{i\rightarrow\mathcal U}\tau(x_i)$, for every $x=(x_i)\in M^{\mathcal U}$.

\subsection{Bimodules} Let $(M,\tau)$ be a tracial von Neumann algebra.
An {\it $M$-bimodule} is a Hilbert space $\mathcal H$ equipped with two normal $*$-homomorphisms $\pi_1:M\rightarrow\mathbb B(\mathcal H)$ and $\pi_2:M^{\text{op}}\rightarrow\mathbb B(\mathcal H)$ whose images commute.
We write $x\xi y=\pi_1(x)\pi_2(y^{\text{op}})\xi$ for $\xi\in\mathcal H$ and $x,y\in M$, and define a $*$-homomorphism $\pi_{\mathcal H}:M\otimes_{\text{alg}} M^{\text{op}}\rightarrow\mathbb B(\mathcal H)$ by letting $\pi_{\mathcal H}(x\otimes y^{\text{op}})=\pi_1(x)\pi_2(y^{\text{op}})$.


Given two $M$-bimodules $\mathcal H,\mathcal K$, we say that $\mathcal H$ is {\it contained} in $\mathcal K$ and  write $\mathcal H\subset\mathcal K$ if there is an $M$-bimodular isometry $T:\mathcal H\rightarrow\mathcal K$.
A basis of neighborhoods for the {\it Fell topology} on the space of all $M$-bimodules is given by $V(\mathcal H,F,S,\varepsilon)$, where $\mathcal H$ is an $M$-bimodule, $F\subset M,S\subset \mathcal H$ are finite sets and $\varepsilon>0$. Here, $V(\mathcal H,F,S,\varepsilon)$ is defined as the set of all $M$-bimodules $\mathcal K$ for which we can find a map $T:S\rightarrow\mathcal K$ such that $$\text{$|\langle xT(\xi)y,T(\eta)\rangle-\langle x\xi y,\eta\rangle|<\varepsilon$, for every $x,y\in F$, and $\xi,\eta\in S$.}$$
If $\mathcal H,\mathcal K$ are $M$-bimodules, we say that $\mathcal H$ is {\it weakly contained} in $\mathcal K$ and write $\mathcal H\subset_{\text{weak}}\mathcal K$ if $\mathcal H$ belongs to the closure of $\mathcal K^{\oplus\infty}:=\mathcal K\otimes\ell^2(\mathbb N)$ in the Fell topology.
Equivalently, $\mathcal H\subset_{\text{weak}}\mathcal K$ if and only if $\|\pi_{\mathcal H}(T)\|\leq\|\pi_{\mathcal K}(T)\|$, for every $T\in M\otimes_{\text{alg}} M^{\text{op}}$.

We next recall \cite[Theorem 3.3.1]{Po86}, which gives a bimodule characterization of property Gamma for separable II$_1$ factors (cf. \cite[Proposition 3.2]{BMO19}). For the reader's convenience, we include a brief proof.
\begin{lemma}[\cite{Po86}]\label{connes}
A separable II$_1$ factor $M$ has property Gamma if and only if $\emph{L}^2(M)\oplus\emph{L}^2(M)$ belongs to the closure of $\emph{L}^2(M)$, in the Fell topology. 
\end{lemma}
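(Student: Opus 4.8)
The plan is to prove both implications using the well-known link between property Gamma and the existence of a sequence of unitaries that is asymptotically central but asymptotically trace-zero, together with Connes' spectral gap characterization.

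\textbf{From Gamma to weak containment.} Suppose $M$ has property Gamma. Then there is a sequence of unitaries $(u_n)\subset M$ with $\tau(u_n)=0$ and $\|[u_n,x]\|_2\to 0$ for every $x\in M$. I would use these unitaries to build, for each $n$, an approximate embedding of the $M$-bimodule $\text{L}^2(M)\oplus\text{L}^2(M)$ into $\text{L}^2(M)$. Concretely, given a finite set $F\subset M$, a finite set $S\subset \text{L}^2(M)\oplus\text{L}^2(M)$, and $\varepsilon>0$, I would take $n$ large and define $T:S\to\text{L}^2(M)$ by sending a pair $(\xi,\eta)$ to $\xi+u_n\eta$ (working first with $\xi,\eta\in M$ and then noting density suffices to verify the Fell neighborhood condition). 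The point is that $\langle x(\xi_1+u_n\eta_1)y,\xi_2+u_n\eta_2\rangle$ splits into four terms; the ``diagonal'' terms $\langle x\xi_1 y,\xi_2\rangle$ and $\langle xu_n\eta_1 y, u_n\eta_2\rangle$ approximate $\langle x\xi_1 y,\xi_2\rangle+\langle x\eta_1 y,\eta_2\rangle$ because $u_n$ is asymptotically central and unitary, while the ``cross'' terms $\langle x\xi_1 y, u_n\eta_2\rangle$ and $\langle xu_n\eta_1 y,\xi_2\rangle$ tend to $0$ because $\tau(u_n)\to 0$ and asymptotic centrality let one move $u_n$ past the finitely many elements of $F$ so that the inner product becomes (approximately) $\tau(u_n)$ times a fixed scalar. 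Hence $\text{L}^2(M)\oplus\text{L}^2(M)$ lies in the Fell closure of $\text{L}^2(M)$.

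\textbf{From weak containment to Gamma.} Conversely, suppose $\text{L}^2(M)\oplus\text{L}^2(M)$ belongs to the Fell closure of $\text{L}^2(M)$. I would invoke Connes' spectral gap theorem \cite{Co75}: if $M$ does \emph{not} have property Gamma, then the $M$-bimodule $\text{L}^2(M)$ has spectral gap, i.e., there exist a finite set $F\subset M$ and $c>0$ such that $\sum_{x\in F}\|x\xi-\xi x\|_2^2\geq c\|\xi-\tau(\xi)1\|_2^2$ for every $\xi\in\text{L}^2(M)$ — equivalently, the only almost-central vectors are almost scalar multiples of $\hat1$. On the other hand, in $\text{L}^2(M)\oplus\text{L}^2(M)$ the vector $\xi_0=(\hat1,0)$ and the vector $\eta_0=(0,\hat1)$ are both central (bimodular-invariant) unit vectors with $\langle x\xi_0 y,\eta_0\rangle=0$ for all $x,y$, yet each is $M$-central. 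Approximating the configuration $(\xi_0,\eta_0)$ inside $\text{L}^2(M)$ via a Fell neighborhood $V(\text{L}^2(M)\oplus\text{L}^2(M),F\cup\{1\},\{\xi_0,\eta_0\},\varepsilon)$ produces vectors $\xi_0',\eta_0'\in\text{L}^2(M)$ that are almost central, almost unit vectors, and almost orthogonal to each other. The spectral gap forces $\xi_0'$ and $\eta_0'$ each to be close to a scalar multiple of $\hat1$, but two unit vectors both close to $\mathbb C\hat1$ cannot be almost orthogonal — a contradiction for $\varepsilon$ small. Therefore $M$ must have property Gamma.

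\textbf{Main obstacle.} The technical care lies in the second implication: one has to run the spectral gap estimate uniformly, tracking how ``almost central, almost unit, almost orthogonal'' vectors in the approximating copy of $\text{L}^2(M)$ interact, and deduce the quantitative contradiction for a suitably small $\varepsilon$ depending on the spectral gap constant $c$ and $|F|$. The first implication is more routine once the right map $T$ is written down; the only subtlety there is checking that it suffices to verify the Fell condition on vectors coming from $M$ (rather than arbitrary $\text{L}^2$ vectors), which follows from density and the boundedness of the bimodule actions. Throughout I would cite Connes' theorem \cite{Co75} and \cite[Theorem 3.3.1]{Po86} as the conceptual backbone, and keep the write-up brief since this is a known result included for the reader's convenience.
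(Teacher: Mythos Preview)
Your proposal is correct and follows essentially the same route as the paper: the forward implication via trace-zero asymptotically central unitaries (the paper simply takes $\xi_1^n=1$, $\xi_2^n=u_n$ as images of the two cyclic vectors, which is your map $T$ restricted to $(\hat1,0)$ and $(0,\hat1)$), and the converse via Connes' spectral gap to force the two approximating almost-central unit vectors to be near scalars, contradicting their near-orthogonality. One small correction to your forward argument: the cross terms do \emph{not} reduce to ``$\tau(u_n)$ times a fixed scalar'' --- there is no such factorization of $\tau(u_n c)$; the correct reason (which the paper states explicitly) is that $u_n\to 0$ weakly, since any weak cluster point of a bounded asymptotically central sequence in a II$_1$ factor lies in the center $\mathbb C1$ and here has trace $0$.
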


\begin{proof} Denote $\xi_1=1\oplus 0,\xi_2=0\oplus 1\in \text{L}^2(M)\oplus\text{L}^2(M)$.

First, assume that $M$ has property Gamma. Let $u_n\in M$ be a sequence of trace zero unitaries such that $\|u_nx-xu_n\|_2\rightarrow 0$, for every $x\in M$. Then $u_n\rightarrow 0$, weakly.
Put $\xi_1^n=1\in\text{L}^2(M)$ and $\xi_2^n=u_n\in\text{L}^2(M)$.  Then we have $\lim_n\langle x\xi_i^ny,\xi_j^n\rangle=\langle x\xi_iy,\xi_j\rangle$, for every $x,y\in M$ and $i,j\in\{1,2\}$.
Thus, $\text{L}^2(M)\oplus\text{L}^2(M)=\overline{\text{span}(M\xi_1M)}\oplus\overline{\text{span}(M\xi_2M)}$ lies in the closure of $\text{L}^2(M)$.

 Conversely, assume that $\text{L}^2(M)\oplus\text{L}^2(M)$ belongs to the closure of $\text{L}^2(M)$ in the Fell topology. Suppose by contradiction that $M$ does not have property Gamma.  Then we can find nets $(\xi_1^n),(\xi_2^n)\subset\text{L}^2(M)$ such that
 $\lim_n\langle x\xi_i^ny,\xi_j^n\rangle=\langle x\xi_iy,\xi_j\rangle$, for every $x,y\in M$ and $i,j\in\{1,2\}$.
Since $x\xi_i=\xi_ix$ and $\|\xi_i\|_2=1$, we get that $\|x\xi_i^n-\xi_i^nx\|_2\rightarrow 0$ and $\|\xi_i^n\|_2\rightarrow 1$, for every $x\in M$ and $i\in\{1,2\}$. Since $M$ does not have property Gamma, \cite[Theorem 2.1]{Co75} implies the existence of a net $(\alpha_i^n)\subset\mathbb T=\{z\in\mathbb C\mid |z|=1\}$ such that $\|\xi_i^n-\alpha_i^n1\|_2\rightarrow 0$, for every $i\in\{1,2\}$.
 Thus,  $|\langle \xi_1^n,\xi_2^n\rangle-\alpha_1^n\overline{\alpha_2^n}|\rightarrow 0$. Since $\langle \xi_1^n,\xi_2^n\rangle\rightarrow\langle\xi_1,\xi_2\rangle=0$ and $(\alpha_1^n\overline{\alpha_2^n})\subset \mathbb T$, this is a contradiction.
\end{proof}

\begin{definition}
A {\it symmetric} $M$-bimodule $(\mathcal H,J)$ is an $M$-bimodule $\mathcal H$ equipped with an anti-unitary involution $J:\mathcal H\rightarrow \mathcal H$ such that $J(x \xi y)=y^*(J\xi)x^*$, for every $x,y\in M$ and $\xi\in\mathcal H$.
\end{definition}

\begin{remark}\label{symmetrize} Let $\mathcal H$ be an $M$-bimodule. Denote by $\overline{\mathcal H}$ the conjugate Hilbert space  endowed with the $M$-bimodule structure  $x\overline{\xi} y=\overline{y^* \xi x^*}$ (see \cite[1.3.7]{Po86}). Then $\mathcal K:=\mathcal H\oplus\overline{\mathcal H}$ is a symmetric Hilbert $M$-bimodule, as witnessed by the anti-unitary involution $J:\mathcal K\rightarrow\mathcal K$ given by $J(\xi\oplus\overline{\eta})=\eta\oplus\overline{\xi}$.\end{remark}

Let $\mathcal H$ be an $M$-bimodule.
A vector $\xi\in\mathcal H$ is called {\it tracial}  if $\langle x\xi,\xi\rangle=\langle\xi x,\xi\rangle=\tau(x)$, for every $x\in M$, and {\it subtracial} if $\langle x\xi,\xi\rangle,\langle \xi x,\xi\rangle\leq\tau(x)$, for every $x\in M$.
A vector $\xi\in\mathcal H$ is called {\it left} (respectively, {\it right}) {\it bounded} if there $C>0$ such that $\|\xi x\|\leq C\|x\|_2$ (respectively, $\|x\xi\|\leq C\|x\|_2$) for every $x\in M$. A vector is {\it bounded} if it is left and right bounded. Note that the subspace of bounded vectors  is dense in $\mathcal H$. If $\xi\in\mathcal H$ is left bounded, we denote by $T_\xi:\text{L}^2(M)\rightarrow\mathcal H$ the bounded operator $T_\xi(x)=\xi x$. 
If $\xi,\eta\in \mathcal H$ are left bounded vectors, then $\langle\xi,\eta\rangle_M:=T_\xi^*T_\eta\in\mathbb B(\text{L}^2(M))$ belongs to $M$. Note that $$\text{$\langle\eta x,\xi\rangle=\tau(\langle \xi,\eta\rangle_Mx)$, for every $x\in M$. }$$

For $M$-bimodules $\mathcal H,\mathcal K$, we denote by $\mathcal H\otimes_M\mathcal K$ their {\it Connes tensor product} (see \cite{Co94}, V, Appendix B).
Let $\mathcal H^0\subset\mathcal H$ be the subspace of left bounded vectors. Then $\mathcal H\otimes_M\mathcal K$ is obtained by separation and completion from the algebraic tensor product $\mathcal H^0\otimes\mathcal K$ endowed with the sesquilinear form $$\langle\xi_1\otimes\eta_1,\xi_2\otimes\eta_2\rangle=\langle\eta_1,\langle\xi_1,\xi_2\rangle_M\eta_2\rangle.$$

\begin{remark}\label{density}
Let  $\mathcal H,\mathcal K$ be $M$-bimodules, $\mathcal H^0\subset\mathcal H$ the subspace of left bounded vectors and $\mathcal K^0\subset\mathcal K$ the subspace of right bounded vectors. 
Let $\xi\in\mathcal H^0$ and $(\eta_i)\subset\mathcal K$ be a net with $\|\eta_i\|\rightarrow 0$. Then $\|\xi\otimes\eta_i\|=\|\eta_i,\langle\xi,\xi\rangle_M\eta_i\|^{1/2}\leq \|\langle\xi,\xi\rangle_M\|^{1/2}\|\eta_i\|\rightarrow 0.$
Similarly, if $(\xi_i)\subset\mathcal H^0$ is a net with $\|\xi_i\|\rightarrow 0$ and $\eta\in\mathcal K^0$, then $\|\xi_i\otimes\eta\|\rightarrow 0$.
These observations imply that if $\mathcal H^1\subset\mathcal H^0$ and $\mathcal K^1\subset\mathcal K^0$ are dense subspaces, then
 the span of $\{\xi\otimes\eta\mid\xi\in\mathcal H^1,\eta\in\mathcal K^1\}$ is dense in $\mathcal H\otimes_M\mathcal K$.
    
\end{remark}

  The following lemma is well-known, and its proof follows a standard recipe (see for instance the proof of \cite[Lemma 13.3.11]{AP18}), but we include details here for completeness.

 \begin{lemma}
 \label{subtracial}
Let $(M,\tau)$ be a tracial von Neumann algebra, $\mathcal H,\mathcal K$ be Hilbert bimodules, $\xi\in\mathcal H$ a subtracial vector and $(\xi_n)\subset\mathcal K$ a net with $\lim_n\langle x\xi_ny,\xi_n\rangle=\langle x\xi y,\xi\rangle$, for every $x,y\in M$. Then there is a net of subtracial vectors $(\eta_n)\in\mathcal K^{\oplus\infty}$  such that $\lim_n\langle x\eta_ny,\eta_n\rangle=\langle x\xi y,\xi\rangle$, for every $x,y\in M$ 
 \end{lemma}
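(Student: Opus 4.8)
\textbf{Proof proposal for Lemma \ref{subtracial}.}

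The plan is to start from the given net $(\xi_n)\subset\mathcal K$, which converges to $\xi$ in the weak sense that $\langle x\xi_ny,\xi_n\rangle\to\langle x\xi y,\xi\rangle$ for all $x,y\in M$, and to ``correct'' each $\xi_n$ so that the resulting vectors become genuinely subtracial while the limiting matrix coefficients are preserved. First I would record what subtraciality of the limit gives us: since $\langle x\xi_n,\xi_n\rangle\to\langle x\xi,\xi\rangle\le\tau(x)$ for every positive $x\in M$ (and similarly on the right), the left and right ``coefficient operators'' of $\xi_n$ are asymptotically dominated by $1$. Concretely, for $\xi_n$ left bounded (we may first reduce to this case by density of bounded vectors, perturbing $\xi_n$ slightly and passing to a refined net), the positive operator $P_n:=\langle\xi_n,\xi_n\rangle_M\in M$ satisfies $\tau(P_nx)=\langle x\xi_n,\xi_n\rangle\to\langle x\xi,\xi\rangle$, so $\tau((P_n-1)_+ )\to 0$; similarly the right coefficient operator $Q_n$ satisfies $\tau((Q_n-1)_+)\to 0$.

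The key step is to produce, from $\xi_n$ together with an auxiliary copy of $\mathcal K$, a vector $\eta_n\in\mathcal K\oplus\mathcal K\subset\mathcal K^{\oplus\infty}$ whose left and right coefficient operators are genuinely $\le 1$, at the cost of adding a vector of norm $\to 0$. The natural device is a ``two-sided cutdown'': set $f_n=P_n^{-1/2}\wedge 1$ (functional calculus, $f_n\in M$, $0\le f_n\le 1$), and replace $\xi_n$ by $f_n^{1/2}\cdot\xi_n\cdot g_n^{1/2}$ for a matching right cutdown $g_n$, plus a small error term in a second coordinate that restores the limiting coefficients. Since $\|f_n^{1/2}\xi_n g_n^{1/2}-\xi_n\|\to 0$ — this follows because $\tau((1-f_n))=\tau((1-P_n^{-1/2})_+\wedge 1)\to 0$ and likewise for $g_n$, so the left and right multipliers converge to $1$ strongly enough against the (asymptotically bounded) vectors $\xi_n$ — the matrix coefficients $\langle x\,(f_n^{1/2}\xi_n g_n^{1/2})\,y,\,f_n^{1/2}\xi_n g_n^{1/2}\rangle$ still converge to $\langle x\xi y,\xi\rangle$. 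One then checks the corrected vector is subtracial: its left coefficient operator is $g_n^{1/2}\langle f_n^{1/2}\xi_n,f_n^{1/2}\xi_n\rangle_M g_n^{1/2}=g_n^{1/2}f_n^{1/2}P_n f_n^{1/2}g_n^{1/2}\le g_n^{1/2}(f_nP_n)g_n^{1/2}\le g_n^{1/2}\cdot 1\cdot g_n^{1/2}=g_n\le 1$, using $f_nP_n\le 1$ by construction of $f_n$; the right coefficient is handled symmetrically. If a single cutdown does not simultaneously control both sides (the left cutdown perturbs the right coefficient operator), iterate the left/right cutdowns alternately, or put the construction in $\mathcal K^{\oplus\infty}$ and take a direct sum / diagonal, which is exactly why the conclusion is stated with $\mathcal K^{\oplus\infty}$ rather than $\mathcal K$. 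Then $(\eta_n)$, viewed in $\mathcal K^{\oplus\infty}$ (first coordinate the corrected vector, remaining coordinates zero or the vanishing error), is the desired net of subtracial vectors.

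The main obstacle I anticipate is the bookkeeping around the fact that the left cutdown $f_n^{1/2}(\cdot)$ and the right cutdown $(\cdot)g_n^{1/2}$ interact: cutting on the left changes the right coefficient operator and vice versa, so one cannot in general make both coefficient operators $\le 1$ in one shot. The clean fix is either (i) to argue that since $\tau((P_n-1)_+)\to0$ and $\tau((Q_n-1)_+)\to0$, finitely many alternating two-sided cutdowns drive both defects below any prescribed $\varepsilon_n\downarrow 0$, and then the limiting coefficients are unaffected because each cutdown moves the vector by $o(1)$; or (ii) to follow the standard recipe referenced in \cite[Lemma 13.3.11]{AP18} verbatim — realize $\xi_n$ as (close to) $\hat a_n$ for $a_n$ in a larger algebra, polar-decompose/truncate $a_n$ to have norm $\le1$ on both sides, absorbing the truncation error into extra summands of $\mathcal K^{\oplus\infty}$. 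Everything else — left boundedness reductions, the elementary operator inequalities, and the persistence of matrix coefficients under $o(1)$ perturbations — is routine.
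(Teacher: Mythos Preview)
There is a genuine gap in the argument: the assertion that $\tau((P_n-1)_+)\to 0$ (and $\tau((Q_n-1)_+)\to 0$) does not follow from the hypotheses. What you have is that the positive functionals $\omega_n(x)=\langle x\xi_n,\xi_n\rangle$ converge \emph{weakly} in $M_*$ to $\omega\le\tau$, and weak convergence is not enough to force the ``overshoot'' $\tau((P_n-1)_+)$ to vanish. For instance, on $M=L^\infty[0,1]$ take $P_n=2\cdot 1_{A_n}$ with $A_n$ a union of $n$ intervals of length $1/(2n)$ equidistributed in $[0,1]$; then $P_n\to 1$ weakly in $L^1$, yet $\tau((P_n-1)_+)=1/2$ for all $n$. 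Since the rest of your scheme (the cutdowns, the estimate $\|f_n^{1/2}\xi_n g_n^{1/2}-\xi_n\|\to 0$, and the iterated version in (i)) is predicated on this claim, the argument does not close.

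What the paper actually does---and this is the step you are missing---is to first upgrade weak convergence to \emph{norm} convergence of $(\omega_n,\omega_n')$ to $(\omega,\omega')$ in $M_*\times M_*$. This is achieved by a Hahn--Banach/Mazur argument: replace $(\xi_n)$ by suitable convex combinations, realized as vectors in $\mathcal K^{\oplus\infty}$, so that the associated pairs of functionals converge in norm while the matrix coefficients $\langle x\,\cdot\,y,\,\cdot\,\rangle$ are preserved (each such coefficient is linear in the pair $(\omega_n,\omega_n')$-data). Once $\|T_n-T\|_1+\|T_n'-T'\|_1\to 0$ in $L^1(M)$, the spectral-projection cutdown you sketched becomes legitimate and a single two-sided truncation $p_n\xi_np_n'$ (suitably rescaled) already gives subtracial vectors with $\|\eta_n-\xi_n\|\to 0$. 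In particular, the role of $\mathcal K^{\oplus\infty}$ is not to accommodate iterated cutdowns, but to host the convex combinations needed for the weak-to-norm upgrade. Your option (ii) of following \cite[Lemma~13.3.11]{AP18} verbatim would indeed work, but the substance of that recipe is exactly this convexity step, which your proposal does not carry out.
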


 \begin{proof}
We  follow the proof of \cite[Theorem]{IV14}.
Define normal positive linear functionals $\omega,\omega',\omega_n,\omega_n'$ on $M$ by letting
$\omega(x)=\langle x\xi,\xi\rangle$, $\omega'(x)=\langle \xi x,\xi\rangle$, $\omega_n(x)=\langle x\xi_n,\xi_n\rangle$ and $\omega_n'(x)=\langle \xi_n x,\xi_n\rangle$, for every $x\in M$ and every $n$. Then  $\omega_n\rightarrow\omega$ and $\omega_n'\rightarrow\omega'$, in the 
weak topology on $M_*$. Since the weak and norm closures of convex subsets of $M_*$ coincide,
after replacing $(\omega_n,\omega_n')$  by a convex combination of $(\omega_n,\omega_n')$ and the vectors $(\xi_n)$  by vectors in $\mathcal K^{\oplus\infty}$, we can ensure that in addition to $\lim_n\langle x\xi_ny,\xi_n\rangle=\langle x\xi y,\xi\rangle$, for every $x,y\in M$, we have $\|\omega_n-\omega\|\rightarrow 0$ and $\|\omega_n'-\omega'\|\rightarrow 0$. 

Write $\omega=\tau(\cdot T), \omega'=\tau(\cdot T'),\omega_n=\tau(\cdot T_n)$ and $\omega_n'=\tau(\cdot T_n')$, where $T,T',T_n,T_n'\in\text{L}^1(M)$ are positive elements. Then  $0\leq T,T'\leq 1$ as $\xi$ is subtracial  and $\varepsilon_n:=\|T_n-T\|_1+\|T_n'-T'\|_1\rightarrow 0$.
Let $p_n$ and $p_n'$ be the spectral projections  of $T_n$ and $T_n'$ corresponding to the interval $[0,1+\sqrt{\varepsilon_n}]$.

We claim that there is $n_0$ such that \begin{equation}\label{claim}\text{$\|(1-p_n)\xi_n\|\leq 2\sqrt[4]{\varepsilon_n}$ and $\|\xi_n(1-p_n')\|\leq 2\sqrt[4]{\varepsilon_n}$, for every $n\geq n_0$.}\end{equation}
If  $q_n=1-p_n$, then $q_nT_nq_n\geq (1+\sqrt{\varepsilon_n})q_n$ and $q_nTq_n\leq q_n$, hence $q_n(T_n-T)q_n\geq \sqrt{\varepsilon_n}q_n$. This gives $\sqrt{\varepsilon_n}\|q_n\|_1\leq \|T_n-T\|_1\leq \varepsilon_n$, hence $\|q_n\|_1\leq\sqrt{\varepsilon_n}$ and $\|q_nT_n\|_1\leq \|T_n-T\|_1+\|q_n\|_1\leq \varepsilon_n+\sqrt{\varepsilon_n}$. Thus, $\|(1-p_n)T_n\|_1=\|q_nT_n\|_1\leq 2\sqrt{\varepsilon_n}$ and therefore $\|(1-p_n)\xi_n\|=\sqrt{\|(1-p_n)T_n\|_1}\leq 2\sqrt[4]{\varepsilon_n}$, for every $n$ large enough. This and a similar inequality prove the claim. 

Let $c_n=(1+\sqrt{\varepsilon_n})^{-1/2}$ and define $\eta_n=c_np_n\xi_np_n'$.
Since $p_nT_np_n\leq (1+\sqrt{\varepsilon_n})$, for every $x\in M$, we have that $\|x\eta_n\|\leq c_n\|xp_n\xi_n\|=c_n\sqrt{\tau(x^*xp_nTp_n)}\leq\sqrt{\tau(x^*x)}=\|x\|_2$ and similarly that $\|\eta_nx\|\leq \|x\|_2$.
Thus, $\eta_n$ is subtracial. Since $\|\xi_n-\eta_n\|\leq (1-c_n)+\|\xi_n-p_n\xi_np_n'\|$ and $\lim_n c_n=1$, \eqref{claim} implies that $\lim_n\|\xi_n-\eta_n\|=0$. Thus, $\lim_n\langle x\eta_ny,\eta_n\rangle=\lim_n \langle x\xi_ny,\xi_n\rangle=\langle x\xi y,\xi\rangle$, for every $x,y\in M$, which finishes the proof of the lemma.
 \end{proof}

\begin{proposition}\label{classA} 
Let $\mathcal A$ be the class of separable II$_1$ factors $M$ for which  $\emph{L}^2(M)$ weakly contains every $M$-bimodule.
Let $M$ and $M_n$, $n\in\mathbb N$, be separable II$_1$ factors. Then the following hold:
    \begin{enumerate}
        \item If $M$ is existentially closed, then $M \in \mathcal A$.
        \item The hyperfinite II$_1$ factor $R \in \mathcal A$.
        \item If $M \in \mathcal A$, then $M^t \in \mathcal A$ for every $t > 0$.
        \item If $M_n \subset M_{n+1}$ and $M_n \in \mathcal A$, for every $n\in\mathbb N$, then $\overline{\cup_{n\in\mathbb N} M_n}^{\mathrm{WOT}} \in \mathcal A$.
        \item If $M \in \mathcal A$, then $M\overline{\otimes} R \in \mathcal A$.
    \end{enumerate}
\end{proposition}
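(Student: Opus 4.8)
The plan is to handle the five items as follows: (1) and (2) are immediate, (3) is the standard behaviour of correspondences under amplification, (5) reduces to (3) and (4), and (4) — the substantive point — is proved through condition (5) of Theorem~\ref{ec}. Item (1) is exactly the implication (1)$\Rightarrow$(4) of Theorem~\ref{ec}, and item (2) is the fact recalled in the introduction that any two $R$-bimodules are weakly equivalent \cite[Proposition~3.1.4]{Po86}, so that the maximal $R$-bimodule is weakly equivalent to $\text{L}^2(R)$.

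For (3) I would invoke the standard dictionary between correspondences of $M$ and of an amplification $M^t$, which preserves weak containment and sends the trivial bimodule to the trivial bimodule. It suffices to treat the two generating cases $M^t=pMp$ for a projection $p\in M$ and $M^t=M\overline{\otimes}\mathbb M_k(\mathbb C)$, since a general $M^t$ is a corner of $M\overline{\otimes}\mathbb M_k(\mathbb C)$ for $k\geq t$. In the first case, given a $pMp$-bimodule $\mathcal K$, one induces the $M$-bimodule $\mathcal H=\text{L}^2(Mp)\otimes_{pMp}\mathcal K\otimes_{pMp}\text{L}^2(pM)$, which satisfies $p\mathcal H p\cong\mathcal K$; since $M\in\mathcal A$ gives $\mathcal H\subset_{\text{weak}}\text{L}^2(M)$ and cornering by $p$ preserves weak containment (because $\pi_{p\mathcal H p}(T)$ is a compression of $\pi_{\mathcal H}(T)$ for $T\in pMp\otimes_{\text{alg}}(pMp)^{\text{op}}$, while $p\text{L}^2(M)p=\text{L}^2(pMp)$ up to rescaling the inner product), one gets $\mathcal K\subset_{\text{weak}}\text{L}^2(pMp)$; the matrix case is the same after compressing by a minimal matrix unit. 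Granting (3) and (4), item (5) follows immediately: $R=\overline{\bigcup_k\mathbb M_{2^k}(\mathbb C)}^{\text{WOT}}$ yields $M\overline{\otimes}R=\overline{\bigcup_k M^{2^k}}^{\text{WOT}}$, an increasing union of II$_1$ factors lying in $\mathcal A$ by (3), hence in $\mathcal A$ by (4).

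It remains to prove (4). First, $N:=\overline{\bigcup_n M_n}^{\text{WOT}}$ is a separable II$_1$ factor, because any $z\in Z(N)$ satisfies $\text{E}_{M_n}(z)\in Z(M_n)=\mathbb C1$ for every $n$ and $\|\text{E}_{M_n}(z)-z\|_2\to0$, forcing $z\in\mathbb C1$. I would then verify condition (5) of Theorem~\ref{ec} for $N$. The clean case is a subunital, subtracial normal completely positive map $\Phi\colon N\to N$: given a finite $F\subset N$ and $\varepsilon>0$, fix $n_0$ with $\|\text{E}_{M_{n_0}}(x)-x\|_2$ and $\|\Phi(\text{E}_{M_{n_0}}(x))-\Phi(x)\|_2$ small on $F$ (using that $\Phi$ is $\|\cdot\|_2$-continuous on bounded sets), set $F'=\text{E}_{M_{n_0}}(F)$, and choose $n\geq n_0$ with $\|\text{E}_{M_n}(\Phi(x'))-\Phi(x')\|_2$ small on $F'$. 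The map $\Phi_n:=\text{E}_{M_n}\circ\Phi|_{M_n}\colon M_n\to M_n$ is again subunital, subtracial, normal and completely positive, so $M_n\in\mathcal A$ supplies $\Psi\in\mathcal S_{M_n}\subset\mathcal S_N$ approximating $\Phi_n$ on $F'$; transporting the estimate from $F'$ back to $F$ uses that every map in $\mathcal S_N$ is $\|\cdot\|_2$-contractive (Kadison--Schwarz together with subtraciality). A standard diagonalization over a $\|\cdot\|_2$-dense sequence in the unit ball of $N$ then produces the required sequence in $\mathcal S_N$.

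The hard part, and the step I expect to be the main obstacle, is reducing a general normal completely positive $\Phi\colon N\to N$ to the subtracial case, since maps in $\mathcal P_N$ carry no uniform $\|\cdot\|_2$-bound. After rescaling, assume $\Phi(1)\leq1$. Represent $\Phi$ by a bounded vector $\xi$ in an $N$-bimodule $\mathcal H$ with $\langle a\xi b,\xi\rangle=\tau(\Phi(a)b)$, and let $p_c\in N$ be the spectral projection for $[0,c]$ of the $\text{L}^1$-density of $\tau\circ\Phi$; then $\xi p_c$ is bounded, right-bounded with constant $\leq\sqrt c$, and converges to $\xi$ in norm. The normal completely positive map $\Phi_c$ attached to $\xi p_c$ is subtracial up to scaling by $c$ and subunital once $c\geq\|\Phi(1)\|$, so by the previous paragraph each $\Phi_c$ lies in the closed cone $\mathcal D$ of normal completely positive maps that are pointwise $\|\cdot\|_2$-limits of maps in $\mathcal P_N$. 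Since $\Phi_c(a)\to\Phi(a)$ weakly in $\text{L}^2(N)$ and uniformly in norm as $c\to\infty$, Mazur's theorem produces a convex combination of the $\Phi_c$ — still in $\mathcal D$ — that is $\|\cdot\|_2$-close to $\Phi$ on any prescribed finite set; as $\mathcal D$ is closed under pointwise $\|\cdot\|_2$-limits, $\Phi\in\mathcal D$. This finishes (5) for $N$, hence (4) and the proposition; the subtracial case is essentially forced by the contractivity of maps in $\mathcal S_N$, whereas the general case genuinely needs the Stinespring truncation together with the convexity argument to bypass the missing uniform norm bound on $\mathcal P_N$-maps.
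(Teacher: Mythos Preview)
Your argument is correct, and the treatment of (1), (2), (3), (5) is essentially the paper's. The genuine divergence is in (4): the paper stays in the bimodule picture, while you pass through the completely positive map formulation (condition (5) of Theorem~\ref{ec}).

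For (4) the paper fixes finitely many subtracial vectors $\xi\in\mathcal H$ and a finite $F\subset (N)_1$, picks $n$ with $\|\text{E}_{M_n}(x)-x\|_2$ small on $F$, uses that $\mathcal H\subset_{\text{weak}}\text{L}^2(M_n)$ as an $M_n$-bimodule (since $M_n\in\mathcal A$), and then invokes Lemma~\ref{subtracial} to replace the approximating vectors in $\text{L}^2(M_n)^{\oplus\infty}$ by subtracial ones; subtraciality is exactly what lets the estimate be transported from $\text{E}_{M_n}(F)$ back to $F$. Because weak containment is detected on diagonal coefficients $\langle x\xi y,\xi\rangle$ with $\xi$ ranging over a dense set, restricting to subtracial $\xi$ costs nothing, and no separate reduction ``general CP map $\to$ subunital subtracial'' is required. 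Your route is equivalent but longer: after the clean subunital/subtracial case you still have to reduce an arbitrary normal CP $\Phi$ to it, which amounts to reproving the ``only if'' direction of Lemma~\ref{trivialbimod}. The paper's direct bimodule argument absorbs that step.

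One small slip in your truncation: with the paper's conventions it is $p_c\xi$, not $\xi p_c$, that becomes right bounded with constant $\sqrt c$. Indeed $\|x(p_c\xi)\|^2=\tau(\Phi(p_cx^*xp_c))=\tau(x^*x\,p_cTp_c)\leq c\|x\|_2^2$, and the associated map is $\Phi_c(x)=\Phi(p_cxp_c)$, which is subunital and $c$-subtracial as you want; by contrast $\xi p_c$ yields $x\mapsto p_c\Phi(x)p_c$, whose trace $\tau(\Phi(x)p_c)$ is not controlled by $c\tau(x)$. With this correction your Mazur step works (and in fact one has $\|\Phi_c(x)-\Phi(x)\|_2\to 0$ directly, so the convexity argument can be dropped).
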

\begin{proof}
    (1) and (2) follow from Theorem \ref{ec} and \cite[Proposition 2.3.2.]{Po86} (see also Corollary \ref{hyper}), respectively.

    (3) Assume that $M\in\mathcal A$ and denote $N = M^t$, for some $t>0$. Let $\mathcal H$ be an $N$-bimodule, $d \ge 1/t$ be a integer, and $p$ be a projection in $\mathbb M_{d} (\mathbb C) \overline{\otimes} N$ with $(\Tr \otimes \tau) (p) = 1/t$, where $\Tr$ is the standard (non-normalized) trace on $\mathbb M_{d} (\mathbb C)$ and $\tau$ denote the trace of $N$. Then $M \simeq N^{1/t} \simeq p(\mathbb M_{d} (\mathbb C) \overline{\otimes} N)p$. Since the isomorphism class of $N^{1/t}$ only depends on  $(\Tr \otimes \tau)(p)$, we can take $p = \mathrm{diag} (p_1,\cdots,p_d)$ where $p_1,\cdots,p_d$ are projections in $N$. Given an $N$-bimodule $\mathcal H$, we consider the $M$-bimodule $\mathcal K = p ( \mathbb M_{d} (\mathbb C) \otimes \mathcal H) p$ with the natural left and right $p(\mathbb M_{d} (\mathbb C) \overline{\otimes} N)p$-actions. More specifically, $$x \cdot \xi \cdot y = 
    \big((\sum\limits_{ 1\le k, l \le n} x_{i,k} \cdot \xi_{k,l} \cdot  x_{l,j} )_{i,j}\big),$$ for all $x=(x_{i,j})$, $y=(y_{i,j}) \in p(\mathbb M_{d} (\mathbb C) \overline{\otimes} N)p$,  $\xi=(\xi_{i,j}) \in \mathcal K$, where $x_{i,j},y_{i,j}\in p_iNp_j$, $\xi_{i,j}\in p_i\mathcal Hp_j$.
    
To show that $\mathcal H$ is weakly contained in $\text{L}^2(N)$, it suffices to find, given $F\subset N, S\subset\mathcal H$ finite and $\varepsilon >0$,  a map $T:S\rightarrow  \text{L}^2(N)^{\oplus\infty}$, such that $| \langle x \xi x' , \eta  \rangle -\langle  x T(\xi) x' , T(\eta) \rangle | < \varepsilon$, for every $x,x'\in F$ and $\xi,\eta\in S$. 
    If $(\Tr \otimes \tau)(p) \ge 1$, we can assume that $p_1 =1$, and the conclusion follows from applying the fact that $\mathcal K$ is weakly contained in $\text{L}^2(M)$ as an $M$-bimodule
    to $e_{1,1} \otimes x, e_{1,1} \otimes x' \in  M=p ( \mathbb M_{d} (\mathbb C) \overline{\otimes} N ) p$, and $e_{1,1} \otimes \xi, e_{1,1} \otimes \eta \in \mathcal K=p ( \mathbb M_{d} (\mathbb C) \otimes \mathcal H ) p$, for $x,y\in F$ and $\xi,\eta\in S$.
    
    If $(\Tr \otimes \tau)(p) < 1$, then we can assume that 
    $p \in N$ and $M = pNp$. Let $\sum q_i = 1$ be a finite partition of $1$ by projections in $N$ such that $\tau (q_i) \le \tau (p) $. For each $i$, let $v_i$ be a partial isometry in $N$ such that $v_i v_i^* = q_i$ and $v_i^* v_i \le p$. Let $\xi,\eta\in S$.
    Then we have \begin{align*}
        \langle x \xi x' , \eta  \rangle &= \sum\limits_{k,i,j,l} \langle (q_k x q_i)   (q_i \xi  q_j)   (q_j x' q_l) ,  q_k \eta q_l \rangle= \sum\limits_{k,i,j,l} \langle (v^*_k x  v_i)   (v^*_i  \xi   v_j)   (v^*_j x'  v_l) ,  v^*_k  \eta  v_l \rangle.\end{align*}

  Since $p\mathcal H p$ is weakly contained in   $\text{L}^2(M)=\text{L}^2(pNp)$ as an $M$-bimodule, for some $\varepsilon_{i,j,k,l}>0$ with $\sum\limits_{i,j,k,l} \varepsilon_{i,j,k,l} < \varepsilon$, there exists $T_{i,j}(\xi) \in \text{L}^2(pNp)^{\oplus\infty}$ for every $(i,j)$ and $\xi \in S$, such that for every $(i,j,k,l)$ we have
    $$| \langle (v^*_k x  v_i)   (v^*_i  \xi   v_j)   (v^*_j x'  v_l) ,  v^*_k  \eta  v_l \rangle- \langle (v^*_k  x  v_i)   T_{i,j}(\xi)   (v^*_j  x' v_l) ,  T_{k,l}(\eta) \rangle|< \varepsilon_{i,j,k,l}, \forall x,x'\in F,\xi,\eta\in S.$$ 
Note that   
    $\langle (v^*_k   x  v_i  )   T_{i,j}(\xi)   (  v^*_j   x'   v_l  ) ,  T_{k,l}(\eta) \rangle = 
    \langle   x  (v_i    T_{i,j}(\xi)     v^*_j )  x'    ,  v _k T_{k,l}(\eta)v^*_l \rangle$. Let
 $T(\xi) = \sum\limits_{i,j} v_i T_{i,j}(\xi) v^*_j \in \text{L}^2(N)^{\oplus\infty}$ for  $\xi\in S$. Then we have
    \begin{align*}
        &|\langle x \xi x' , \eta  \rangle -\langle  x T(\xi) x' , T(\eta) \rangle |  \\
        \le& \sum\limits_{i,j,k,l}  | \langle (v^*_k x  v_i)   (v^*_i  \xi   v_j)   (v^*_j x'  v_l) ,  v^*_k  \eta  v_l \rangle - \langle  x (v_i T_{i,j}(\xi) v^*_j)  x' , v_k T_{k,l}(\eta)) v^*_l \rangle  |< \varepsilon
    \end{align*}
for every $\xi,\eta\in S$, which finishes the proof of (3).

(4) Let $M = \overline{\cup M_n}^{\rm WOT}$ and $\text{E}_n$ be the conditional expectation from $M$ onto $M_n$, for every $n\in\mathbb N$. Let $\mathcal H$ be an $M$-bimodule. In order to show that $\mathcal H$ is weakly contained in $\text{L}^2(M)$,  it suffices to prove that for every $F\subset M, S\subset \mathcal H$ finite and $\varepsilon>0$ we can find a map $T:S\rightarrow\text{L}^2(M)^{\oplus\infty}$ such that $| \langle x \xi y , \xi  \rangle -\langle  x T(\xi) y , T(\xi) \rangle | < \varepsilon$, for every $x,y\in F$ and $\xi\in S$. To this end, we may moreover assume that $\|x\|\leq 1$, for every $x\in F$, and that $S$ consists of subtracial vectors.

Let $n\in\mathbb N$ such that $\|\text{E}_n(x)-x\|_2<\varepsilon/6$, for every $x\in F$. Then 
$|\langle x\xi y,\xi\rangle-\langle \text{E}_n(x)\xi\text{E}_n(y),\xi\rangle|<\varepsilon/3$, for every $x,y\in F$ and $\xi\in S$.
Since $M_n\in\mathcal A$, $\mathcal H$ is weakly contained in $\text{L}^2(M_n)$ as an $M_n$-bimodule. If $\xi\in S$, then since $\xi$ is a subtracial vector, Lemma \ref{subtracial} gives a subtracial vector $T(\xi)\in \text{L}^2(M_n)^{\oplus\infty}$ such that $|\langle \text{E}_n(x)\xi\text{E}_n(y),\xi\rangle-\langle\text{E}_n(x)T(\xi)\text{E}_n(y),T(\xi)\rangle|<\varepsilon/3$, for every $x,y\in F$. If we view $\text{L}^2(M)^{\oplus\infty}$ as an $M$-bimodule, then $T(\xi)\in\text{L}^2(M_n)^{\oplus\infty}\subset\text{L}^2(M)^{\oplus\infty}$ is still a subtracial vector. Since $\|\text{E}_n(x)-x\|_2<\varepsilon/6$, for every $x\in F$, we get that
$|\langle\text{E}_n(x)T(\xi)\text{E}_n(y),T(\xi)\rangle-\langle xT(\xi)y,T(\xi)\rangle|<\varepsilon/3$, for every $x,y\in F$. Altogether, we get that $|\langle x\xi y,\eta\rangle-\langle xT(\xi)y,T(\eta)\rangle|<\varepsilon$, for every $x,y\in F$ and $\xi\in S$, which finishes the proof of (4).

(5) Write $R=\overline{\cup_{n\in\mathbb N}\mathbb M_{2^n}(\mathbb C)}^{\rm WOT}$, for the usual inclusions $\mathbb M_{2^n}(\mathbb C)\subset \mathbb M_{2^{n+1}}(\mathbb C)$, for every $n\in\mathbb N$.  Then $M \overline{\otimes} R  =  \overline{\cup_{n\in\mathbb N}(M \overline{\otimes} \mathbb M_{2^n}(\mathbb C))}^{\rm WOT}$, and the conclusion follows from (3) and (4).
\end{proof}

\subsection{Completely positive maps}
Let $(M,\tau)$ be a tracial von Neumann algebra. 
We now recall the well-known correspondence between $M$-bimodules $\mathcal H$ and normal, completely positive maps on $M$.
Let $\xi\in\mathcal H$ be a bounded vector. Then $\Phi_\xi(x)=T_\xi^*xT_\xi$ belongs to $M$, for every $x\in M$. The map $\Phi_\xi:M\rightarrow M$, called a {\it coefficient} {of $\mathcal H$}, is normal, completely positive and satisfies $$\text{$\tau(\Phi_\xi(x)y)=\langle x\xi y,\xi\rangle$, for every $x,y\in M$.}$$
Moreover, $\Phi_\xi$ extends to a bounded operator $\Phi_\xi:\text{L}^2(M)\rightarrow\text{L}^2(M)$. Let $C>0$ such that $\|x\xi\|,\|\xi x\|\leq C\|x\|_2$, for every $x\in M$. Then  $|\tau(\Phi_\xi(x)y)|=|\langle x\xi,\xi y^*\rangle|\leq \|x\xi\|\;\|\xi y^*\|\leq C^2\|x\|_2\;\|y\|_2$, for every $x,y\in M$. Thus, we get that $\|\Phi_\xi(x)\|_2\leq C^2\|x\|_2$, for every $x\in M$. 

 \begin{definition}\label{mixingbimod}
Let $\mathcal H$ be an $M$-bimodule.

\begin{enumerate}
\item We call $\mathcal H$ a {\it mixing} $M$-bimodule if for every sequence $u_n\in\mathcal U(M)$ with $u_n\rightarrow 0$ weakly we have $\lim\limits_{n\rightarrow\infty}\big(\sup_{x\in M,\|x\|\leq 1}|\langle u_n\xi x,\eta\rangle|\big)=\lim\limits_{n\rightarrow\infty}\big(\sup_{x\in M,\|x\|\leq 1}|\langle x\xi u_n,\eta\rangle|\big)=0$, for every $\xi,\eta\in\mathcal H$.
\item Denote by $\mathcal H_{\text{mix}}$ the set of bounded vectors $\xi\in\mathcal H$ such that $\Phi_\xi\in\mathbb B(\text{L}^2(M))$ is  compact. We call $\mathcal H$  a {\it strictly mixing} $M$-bimodule if the span of $M\mathcal H_{\text{mix}}M$ is dense in $\mathcal H$. 
\end{enumerate}
\end{definition}
The notions of mixing and strictly mixing bimodules were introduced respectively in \cite[Definition 2.3]{PS09} and \cite[Definition 4]{OOT15} (see also \cite[Definition 3.1]{BF07}). These two notions have recently been shown to be equivalent in \cite[Theorem 5.10]{DKEP22}.

For future reference we note that if $\xi\in\mathcal H$ is subtracial, then $\Phi_\xi$ is subunital and subtracial.


Conversely, given a normal, completely positive map $\Phi:M\rightarrow M$, there is a Hilbert $M$-bimodule $\mathcal H_{\Phi}$ together with a vector $\xi_{\Phi}\in\mathcal H_{\Phi}$ 
such that $\overline{\text{span}(M\xi_{\Phi}M)}=\mathcal H_{\Phi}$ and $\langle x\xi_{\Phi}y,\xi_{\Phi}\rangle=\tau(\Phi(x)y)$, for every $x,y\in M$. 
Assume that $\Phi$ is symmetric, i.e.,  $\tau(\Phi(x)y)=\tau(x\Phi(y))$, for every $x,y\in M$. Then $\mathcal H_{\Phi}$ is a symmetric $M$-bimodule  as witnessed by the anti-unitary involution  $J:\mathcal H_{\Phi}\rightarrow \mathcal H_{\Phi}$ given by $J(x\xi_{\Phi}y)=y^*\xi_{\Phi}x^*$.
Moreover, $\xi_{\Phi}$ is a bounded vector, $J(\xi_{\Phi})=\xi_{\Phi}$, and 
\begin{equation}\label{Phi}
    \text{$\langle \xi_{\Phi},x\xi_{\Phi}y\rangle_M=\Phi(x)y$, for every $x,y\in M$.}
\end{equation}

\begin{lemma}\label{trivialbimod}
Let $(M,\tau)$ be a tracial von Neumann algebra, $\Phi:M\rightarrow M$ a normal completely positive map, and $\mathcal K$ an $M$-bimodule.
Then $\mathcal H_{\Phi}\subset_{\emph{weak}}\mathcal K$ if and only if there is a net $(\Phi_i)$ of coefficients of $\mathcal K^{\oplus\infty}$ such that $\|\Phi_i(x)-\Phi(x)\|_2\rightarrow 0$, for every $x\in M$. 

Moreover, if $\Phi$ is subunital, subtracial and $\mathcal H_{\Phi}\subset_{\emph{weak}}\mathcal K$, then there is a net $(\Phi_i)$ of subunital, subtracial coefficients of $\mathcal K^{\oplus\infty}$ such that $\|\Phi_i(x)-\Phi(x)\|_2\rightarrow 0$, for every $x\in M$. Furthermore, in this case, if $\Phi$ is symmetric, then we can assume that $\Phi_i$ is symmetric, for every $i$.

Finally, assume that $M$ is separable. Then we may take $(\Phi_i)$ to be a sequence in the above assertions.
\end{lemma}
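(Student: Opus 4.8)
The plan is to prove the two implications separately, the ``only if'' direction being the substantive one, and to carry the subunital/subtracial/symmetric refinements and the separable statement along the same argument. Suppose first that $(\Phi_i)=(\Phi_{\xi_i})$ is a net of coefficients of $\mathcal{K}^{\oplus\infty}$ with $\|\Phi_i(x)-\Phi(x)\|_2\to 0$ for every $x\in M$. Then $\langle x\xi_i y,\xi_i\rangle=\tau(\Phi_i(x)y)\to\tau(\Phi(x)y)=\langle x\xi_\Phi y,\xi_\Phi\rangle$ for all $x,y\in M$, and since $\xi_\Phi$ generates $\mathcal{H}_\Phi$ as a bimodule, approximating a finite subset of $\mathcal{H}_\Phi$ by vectors of the form $\sum_j a_j\xi_\Phi b_j$ and replacing $\xi_\Phi$ by $\xi_i$ places $\mathcal{H}_\Phi$ in the Fell closure of $\mathcal{K}^{\oplus\infty}$; hence $\mathcal{H}_\Phi\subset_{\mathrm{weak}}\mathcal{K}$. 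The refined ``if'' statements are immediate, since subtracial and symmetric coefficients are in particular coefficients.

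For the converse, assume $\mathcal{H}_\Phi\subset_{\mathrm{weak}}\mathcal{K}$; rescaling, we may assume $\Phi$ is subunital. Fix a finite $F\subset M$ and $\varepsilon>0$; the goal is a coefficient $\Phi_\zeta$ of $\mathcal{K}^{\oplus\infty}$, subunital (and subtracial, resp.\ symmetric, when $\Phi$ is so, taking $\mathcal{K}$ symmetric in the last case), with $\|\Phi_\zeta(x)-\Phi(x)\|_2<\varepsilon$ on $F$. Applying the definition of the Fell topology to the singleton $\{\xi_\Phi\}$ gives a net of bounded vectors $\eta_i\in\mathcal{K}^{\oplus\infty}$ with $\langle x\eta_i y,\eta_i\rangle\to\tau(\Phi(x)y)$ for all $x,y\in M$. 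The first key step is to truncate these exactly as in the proof of Lemma \ref{subtracial}: after passing to convex combinations so that the densities of $x\mapsto\langle\eta_i x,\eta_i\rangle$ converge in $\|\cdot\|_1$ to $\Phi(1)$ (which is $\le 1$ since $\Phi$ is subunital), compressing by the appropriate spectral projections and rescaling produces vectors — still satisfying the same limit — with $\|\eta_i x\|\le\|x\|_2$ for all $x$, equivalently $\Phi_{\eta_i}$ is subunital. If $\Phi$ is subtracial one instead runs the two-sided truncation of Lemma \ref{subtracial} to make the $\eta_i$ subtracial, so $\Phi_{\eta_i}$ is subunital and subtracial; if moreover $\Phi$ is symmetric, then $\mathcal{H}_\Phi$ is symmetric with $J\xi_\Phi=\xi_\Phi$, the left and right densities in Lemma \ref{subtracial} coincide, the truncation respects $J$, the $\eta_i$ can be taken $J$-invariant, and $\Phi_{\eta_i}$ becomes symmetric.

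The second key step upgrades weak convergence to $\|\cdot\|_2$-convergence. Subunitality of $\Phi_{\eta_i}$ forces $\|\Phi_{\eta_i}(x)\|_2\le\|x\|$, so $(\Phi_{\eta_i}(x))_{x\in F}$ converges to $(\Phi(x))_{x\in F}$ weakly in $\bigoplus_{x\in F}\mathrm{L}^2(M)$ along a bounded net, while the densities of $\tau\circ\Phi_{\eta_i}$ converge weakly in $\mathrm{L}^1(M)$ to that of $\tau\circ\Phi$. Because $\Phi_{\alpha\oplus\beta}=\Phi_\alpha+\Phi_\beta$ and $\Phi_{c\alpha}=c^2\Phi_\alpha$, the set of pairs $\big((\Phi_\xi(x))_{x\in F},\ \text{density of }\tau\circ\Phi_\xi\big)$ with $\xi$ a subunital vector of $\mathcal{K}^{\oplus\infty}$ is a convex cone, in whose weak closure the pair coming from $\Phi$ lies; by Mazur's theorem it lies in the norm closure, and a witnessing convex combination $\zeta=\bigoplus_k\sqrt{t_k}\,\eta_{i_k}$ is a bounded vector of $\mathcal{K}^{\oplus\infty}$, still subunital (and subtracial, resp.\ symmetric), with $\|\Phi_\zeta(x)-\Phi(x)\|_2<\varepsilon$ on $F$ and with the density of $\tau\circ\Phi_\zeta$ within $\varepsilon$ in $\|\cdot\|_1$ of that of $\tau\circ\Phi$. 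Indexing over $(F,\varepsilon)$ yields the net. For separable $M$, choose an increasing sequence of finite sets $F_m$ in the unit ball of $M$ whose union is $\|\cdot\|_2$-dense there, and let $\Phi_m$ be the coefficient above for $F=F_m$, $\varepsilon=1/m$; then $\Phi_m\to\Phi$ in $\|\cdot\|_2$ on $\bigcup_m F_m$ and $\|b_m-b\|_1\to 0$, where $b_m,b$ are the densities of $\tau\circ\Phi_m$, $\tau\circ\Phi$. For $x$ in the unit ball and $y\in\bigcup_m F_m$ in the unit ball with $\|x-y\|_2$ small, Kadison--Schwarz and subunitality give $\|\Phi_m(x-y)\|_2^2\le\tau(\Phi_m((x-y)^*(x-y)))=\|(x-y)b_m^{1/2}\|_2^2$; by the Powers--St{\o}rmer inequality $\|b_m^{1/2}-b^{1/2}\|_2\to 0$, so the vectors $b_m^{1/2}$ lie in a $\|\cdot\|_2$-compact set and, as $x-y\to 0$ in $\|\cdot\|_2$ while remaining bounded (hence strongly), this quantity is small uniformly in $m$; combined with $\|\Phi(y)-\Phi(x)\|_2\to 0$ (normality of $\Phi$) and $\|\Phi_m(y)-\Phi(y)\|_2<1/m$ for large $m$, a $3\varepsilon$-argument gives $\Phi_m(x)\to\Phi(x)$ in $\|\cdot\|_2$ for every $x$, and scaling removes the norm restriction. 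In the subtracial case this last estimate simplifies, since $b_m\le 1$ gives $\|\Phi_m(x-y)\|_2\le\|x-y\|_2$ directly.

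The step I expect to be the main obstacle is converting the qualitative Fell-topology containment into a quantitative pointwise $\|\cdot\|_2$-approximation by coefficients. Two points need care: one must first truncate the Fell-approximants — via the spectral-projection mechanism of Lemma \ref{subtracial} — to gain a uniform subunitality bound before a Mazur-type convexity argument can turn weak convergence into norm convergence; and, since the approximating coefficients are not uniformly $\|\cdot\|_2$-bounded in the non-subtracial case, extracting a sequence (rather than a net) forces one to control the densities $\tau\circ\Phi_{\eta_i}$ and invoke Powers--St{\o}rmer, which is precisely what supplies the equicontinuity needed to pass from a countable dense subset to all of $M$.
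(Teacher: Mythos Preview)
Your argument is essentially correct and follows the same core strategy as the paper (Fell approximation $\to$ truncation \`a la Lemma~\ref{subtracial} $\to$ Mazur convexity to upgrade weak to $\|\cdot\|_2$-convergence), but the organization differs in two places worth noting.

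\emph{General vs.\ subtracial case.} The paper proves the subunital/subtracial ``moreover'' assertion first, and only then handles a general normal c.p.\ $\Phi$ by cutting it down: choosing projections $p_n\to 1$ so that $\Psi_n(x)=\Phi(p_nxp_n)$ has bounded density $p_nTp_n\in M$, rescaling $c_n^{-1}\Psi_n$ to be subunital \emph{and} subtracial, and then invoking the already-proved moreover assertion. You instead rescale $\Phi$ to subunital and run a one-sided version of Lemma~\ref{subtracial}. This is valid, but note that the paper's reduction buys you simplicity downstream: once everything is subtracial, $\|\Phi_i(x)\|_2\le\|x\|_2$, so the equicontinuity needed in the separable case is immediate and your Powers--St{\o}rmer argument becomes unnecessary. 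Your route works, but it is doing extra work that the paper avoids by reducing to the subtracial case first.

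\emph{Symmetric case.} Here there is a genuine discrepancy. You write ``taking $\mathcal K$ symmetric in the last case'' and then use a $J$ on $\mathcal K^{\oplus\infty}$ to produce $J$-invariant approximants $\eta_i$. But the lemma makes no symmetry hypothesis on $\mathcal K$, so this is an unjustified extra assumption. The paper's device is different and does not require $\mathcal K$ symmetric: given the subtracial coefficients $\Phi_i=\Phi_{\eta_i}$, one forms the trace-adjoint $\Phi_i^*$ (the c.p.\ map determined by $\tau(x\Phi_i^*(y))=\langle x\eta_i y,\eta_i\rangle$) and sets $\Psi_i=\tfrac12(\Phi_i+\Phi_i^*)$, which is symmetric, subunital, subtracial, and still converges weakly to the symmetric $\Phi$; a final Mazur step then gives $\|\cdot\|_2$-convergence. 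You should replace the $J$-invariance argument by this averaging trick.
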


\begin{proof}
First, we prove the ``if" part of the main assertion. Assume that $(\Phi_i)$  is a net of coefficients of $\mathcal K^{\oplus\infty}$ so that $\|\Phi_i(x)-\Phi(x)\|_2\rightarrow 0$, for every $x\in M$. 
Let $\xi_i\in\mathcal K^{\oplus\infty}$ such that $\tau(\Phi_i(x)y)=\langle x\xi_iy,\xi_i\rangle$.
 Since $\lim_i\tau(\Phi_i(x)y)=\tau(\Phi(x)y)$, we conclude that $\langle x\xi_\Phi y,\xi_\Phi\rangle=\lim_i\langle x\xi_iy,\xi_i\rangle$, for every $x,y\in M$. This implies that $\mathcal H_{\Phi}\subset_{\text{weak}}\mathcal K$.

Now, assume that $\Phi$ is subunital, subtracial and $\mathcal H_{\Phi}\subset_\text{weak}\mathcal K$. 
Then $\xi_\Phi\in\mathcal H_{\Phi}$ is a subtracial vector. By Lemma \ref{subtracial} we  find a net of subtracial vectors $(\eta_i)\subset\mathcal K^{\oplus\infty}$ with $\langle x\xi_\Phi y,\xi_\Phi\rangle=\lim_i\langle x\eta_i y,\eta_i\rangle$, for every $x,y\in M$. Thus, if $\Phi_i:=\Phi_{\eta_i}$, then $\tau(\Phi(x)y)=\lim_i\tau(\Phi_i(x)y)$, for every $x,y\in M$. 
Since $\Phi_i$ is completely positive, $\Phi_i(x)^*\Phi_i(x)\leq \Phi_i(x^*x)$ and thus $\|\Phi_i(x)\|\leq \|x\|$, for every $x\in M$ and every $i$.
This implies that $\Phi_i(x)\rightarrow\Phi(x)$, in the weak topology on $\text{L}^2(M)$, for every $x\in M$. Since the set of subunital, subtracial coefficients of $\mathcal K^{\oplus\infty}$   is a convex set, after taking convex combinations of $(\Phi_i)$, the moreover assertion follows. Assume additionally that $\Phi$ is symmetric.
Since $\eta_i$ is subtracial,  there is a completely positive map $\Phi_i^*:M\rightarrow M$ such that $\langle x\eta_i y,\eta_i\rangle=\tau(x\Phi_i^*(y))$, for every $x\in M$ and every $i$. Let $\Psi_i=\frac{1}{2}\big(\Phi_i+\Phi_i^*):M\rightarrow M$. Then $\Psi_i(x)\rightarrow\Phi(x)$, in the weak topology on $\text{L}^2(M)$, for every $x\in M$. Since $\Psi_i$ is symmetric for every $i$ and the set of symmetric, subunital, subtracial coefficients of $\mathcal K^{\oplus\infty}$   is a convex set, the furthermore assertion follows by taking convex combinations of $(\Psi_i)$.

Next, we prove the ``only if" part of the main assertion. let $\Phi:M\rightarrow M$ be a normal completely positive map such that $\mathcal H_{\Phi}\subset_{\text{weak}}\mathcal K$. 
Let $T\in\text{L}^1(M)$ be a positive element such that $\tau\circ\Phi=\tau(\cdot T)$. 
For $n\in\mathbb N$, let $T_n\in M$ such that $\|T_n-T\|_1\leq (8n^2)^{-1}$. 
Let $p_n\in M$ be a projection such that $p_nTp_n\in M$ and 
$\|p_n-1\|_2\leq (8n^2(\|T_n\|+1))^{-1/2}$.
 Define $\Psi_n:M\rightarrow M$ by  
 $\Psi_n(x)=\Phi(p_n xp_n)$.
Let $x\in (M)_1$. Since  $\|\Phi(x)\|_2^2=\tau(\Phi(x)^*\Phi(x))\leq \tau(\Phi(x^*x))=\tau(x^*xT)$, $\|p_nxp_n-x\|\leq 2$, and $\|p_nxp_n-x\|_2\leq 2\|p_n-1\|_2$, we get that

\begin{align*}
\|\Psi_n(x)-\Phi(x)\|_2^2\leq\tau((p_nxp_n-x)^*(p_nxp_n-x)T)\leq 4\|T_n-T\|_1+4\|T_n\|\|p_n-1\|_2^2\leq (n^2)^{-1}.
\end{align*}

Let $c_n:=\max\{\|\Psi_n(1)\|,\|p_n Tp_n\|\}>0$. Since $\tau\circ\Psi_n=\tau(\cdot (p_nTp_n))$, we get that $c_n^{-1}\Psi_n$ is a subunital, subtracial completely positive map.
 By applying the moreover assertion to $c_n^{-1}\Psi_n$ and using that  $\sup_{x\in (M)_1}\|\Psi_n(x)-\Phi(x)\|_2\leq 1/n$, we find a net $(\Phi_i)$ of coefficients of $\mathcal K^{\oplus\infty}$ such that $\|\Phi_i(x)-\Phi(x)\|_2\rightarrow 0$, for every $x\in M$.

 Finally, assume that $M$ is separable. Let $\Phi,(\Phi_i)$ be subunital, subtracial completely positive maps on $M$ such that $\|\Phi_i(x)-\Phi(x)\|_2\rightarrow 0$, for every $x\in M$. Since $\|\Phi(x)\|_2\leq \|x\|_2$ and $\|\Phi_i(x)\|_2\leq \|x\|_2$, we get that $\|\Phi_i(x)-\Phi(x)\|_2\leq 2\|x\|_2$, for every $x\in M$. It follows that we can find a subsequence $(\Phi_{i_n})$ of $(\Phi_i)$ such that $\|\Phi_{i_n}(x)-\Phi(x)\|_2\rightarrow 0$, for every $x\in M$. This implies that $(\Phi_i)$ can be taken to be a sequence in the assertions of Lemma \ref{trivialbimod}.
 \end{proof}


\subsection{An elementary lemma on homomorphisms to ultrapowers}
We conclude this section by recording some terminology and  a lemma that will be used in  the proofs of Theorems \ref{construction} and \ref{non-Gamma}.
Let $S$ be a set.
A {\it $*$-monomial} in variables $X_s,s\in S$, is an expression of the form $Y_1Y_2\cdots Y_k$, where $k\in\mathbb N$ and $Y_i\in\{X_s,X_s^*\mid s\in S\}$, for every $1\leq i\leq k$. A {\it $*$-polynomial} $p$ in variables $X_s,s\in S,$ is a complex linear combination of $*$-monomials.
If $A$ is a $*$-algebra, we denote by $p(x_s, s\in S)$ the {\it evaluation} of $p$ at some $\{x_s\mid s\in S\}\subset A$. 

\begin{lemma}\label{hom}
Let  $(M,\tau_M)$ and $(N,\tau_N)$ be tracial von Neumann algebras, $\{x_s\mid s\in S\}\subset (N)_1$  a generating set of $N$, and $\mathcal U$ be a cofinal ultrafilter on a directed set $I$.
For every $i\in I$, let $\{x_{i,s}\mid s\in S\}\subset (M)_1$ such that $\lim_i\tau_M(p(x_{i,s},s\in S))=\tau_N(p(x_s, s\in S))$, for every $*$-monomial $p$ in variables $X_s,s\in S$.
 Then there is a trace preserving $*$-homomorphism $\pi:N\rightarrow M^{\mathcal U}$ such that $\pi(x_s)=(x_{i,s})$, for every $s\in S$.

\end{lemma}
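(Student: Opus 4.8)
The plan is to define $\pi$ on the generators by $\pi(x_s) = (x_{i,s})_i \in M^{\mathcal U}$ and extend it by density, using the convergence of moments to control everything. First I would observe that since each $x_{i,s} \in (M)_1$, the tuple $(x_{i,s})_i$ indeed represents an element of $M^{\mathcal U}$ of norm at most $1$. Next I would consider the map sending a $*$-polynomial $p$ to $p((x_{i,s})_i, s\in S) = (p(x_{i,s},s\in S))_i \in M^{\mathcal U}$; this is a well-defined unital $*$-algebra homomorphism $\pi_0$ from the free $*$-algebra (equivalently, the $*$-algebra of $*$-polynomials in $X_s$, $s\in S$) into $M^{\mathcal U}$. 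The hypothesis $\lim_i \tau_M(p(x_{i,s},s\in S)) = \tau_N(p(x_s,s\in S))$ for every $*$-monomial $p$, hence by linearity for every $*$-polynomial $p$, says exactly that $\tau_{\mathcal U}(\pi_0(p)) = \tau_N(p(x_s,s\in S))$ for every $*$-polynomial $p$.

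The key step is to use this to show $\pi_0$ factors through the von Neumann algebra generated by the $x_s$ and is trace preserving and $\|\cdot\|_2$-isometric onto its image. Given a $*$-polynomial $p$, apply the moment identity to the $*$-polynomial $q = p^* p$ (whose monomials are again $*$-monomials in the $X_s$): we get
\begin{align*}
\|\pi_0(p)\|_{2,\mathcal U}^2 = \tau_{\mathcal U}(\pi_0(p)^*\pi_0(p)) = \tau_{\mathcal U}(\pi_0(p^*p)) = \tau_N((p^*p)(x_s,s\in S)) = \|p(x_s,s\in S)\|_2^2.
\end{align*}
Thus $p \mapsto p(x_s,s\in S) \in N$ and $p \mapsto \pi_0(p) \in M^{\mathcal U}$ agree in the sense that they have the same $\|\cdot\|_2$; in particular the map $p(x_s,s\in S) \mapsto \pi_0(p)$ is a well-defined $\|\cdot\|_2$-isometric $*$-algebra homomorphism from the (SOT-dense, by the generating assumption) unital $*$-subalgebra $A := *\text{-}\mathrm{alg}(x_s, s\in S) \subset N$ into $M^{\mathcal U}$. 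Since it preserves $\|\cdot\|$ as well (it is a $*$-homomorphism of $C^*$-algebras, or one checks $\|p(x_s)\|_N = \lim_{n}\|(p^*p)^n(x_s)\|_2^{1/(2n)}$ against the corresponding limit in $M^{\mathcal U}$), it is in particular $\|\cdot\|$-contractive.

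Finally I would extend by continuity. The unit ball of $A$ is $\|\cdot\|_2$-dense in the unit ball of $N$ (Kaplansky density), and our map is $\|\cdot\|_2$-isometric and $\|\cdot\|$-bounded on $A$, so it extends uniquely to a $\|\cdot\|_2$-continuous, trace-preserving map $\pi: N \to M^{\mathcal U}$; one checks on bounded $\|\cdot\|_2$-convergent nets that $\pi$ is multiplicative and $*$-preserving, hence a $*$-homomorphism, and that $\tau_{\mathcal U}\circ\pi = \tau_N$ by continuity of both traces in $\|\cdot\|_2$ on bounded sets. By construction $\pi(x_s) = (x_{i,s})_i$ for every $s \in S$. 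The main obstacle, and the only real point requiring care, is the passage from the dense $*$-subalgebra $A$ to all of $N$: one must verify that the extension remains multiplicative, which uses joint $\|\cdot\|_2$-continuity of multiplication on norm-bounded sets together with the uniform norm bound $\|\pi(a)\| \le \|a\|$ inherited from $A$ — a standard but essential argument (the cofinality of $\mathcal U$ on $I$ guarantees $M^{\mathcal U}$ is genuinely a tracial von Neumann algebra so these completeness statements hold).
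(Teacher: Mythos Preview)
Your proposal is correct and follows essentially the same approach as the paper: define $\pi$ on the $*$-algebra $A$ generated by the $x_s$, use the moment hypothesis to show $\|\pi(p(x_s))\|_2 = \|p(x_s)\|_2$ (hence well-definedness), and extend by density. The paper's proof is terser --- it defines $\pi$ directly on $A$ rather than factoring through the free $*$-algebra, and simply asserts the extension step --- but the substance is identical; your additional care about norm-boundedness and Kaplansky density just fills in what the paper leaves implicit (though note your parenthetical ``it is a $*$-homomorphism of $C^*$-algebras'' is not quite right since $A$ need not be norm-closed; your alternative via $\|a\| = \lim_n \|(a^*a)^n\|_2^{1/(2n)}$ is the correct justification).
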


\begin{proof}
Let  $A=\{p(x_s, s\in S)\mid \text{$p$ $*$-polynomial in $X_s,s\in S$}\}$ and define a map $\pi:A\rightarrow M^\mathcal U$ by letting $\pi(p(x_s,s\in S))=(p(x_{i,s},s\in S)),$ for every $*$-polynomial $p$ in $X_s,s\in S$.

Then 
$\pi$ is well-defined. If $p_1(x_s,s\in S) = p_2(x_s,s\in S)$, for $*$-polynomials $p_1,p_2$ on $X_s,s\in S$, then $p(x_s,s\in S)=0$, with $p=p_1-p_2$. Thus, we have that $$\lim_{i\rightarrow\mathcal U}\|p(x_{i,s},s\in S)\|_{2,\tau_M}=\lim\limits_{i\rightarrow\mathcal U}\tau_M(p^*p(x_{i,s},s\in S))^{1/2}=\tau_N(p^*p(x_s,s\in S))^{1/2}=0.$$ 

Hence,  $\lim_{i\rightarrow\mathcal U}\|p_1(x_{i,s},s\in S)-p_2(x_{i,s},s\in S)\|_{2,\tau_M}=0$, so $\pi(p_1(x_s,s\in S))=\pi(p_2(x_s,s\in S))$ in $M^\mathcal U$. Since $\pi$ is a trace preserving $*$-homomorphism on $A$ and $A$ is SOT-dense in $N$, $\pi$ extends to a trace preserving  $*$-homomorphism $\pi:N\rightarrow M^{\mathcal U}$. By definition, $\pi(x_s)=(x_{i,s})$, for every $s\in S$.
\end{proof}


\section{Shlyakhtenko's $M$-valued semicircular systems
}
\label{M-valued}
In \cite{Sh97}, Shlyakhtenko introduced a construction  which generalizes Voiculescu's free Gaussian functor (\cite{Vo83}) and  associates to every  von Neumann algebra $M$ and symmetric $M$-bimodule $\mathcal H$, a von Neumann algebra which contains $M$. We recall this construction here in the case when $M$ is tracial, following closely \cite[Section 3]{KV15}.

\begin{definition}
Let $(M,\tau)$ be a tracial von Neumann algebra, $(\mathcal H,J)$ a symmetric $M$-bimodule and denote by $\mathcal H^0\subset\mathcal H$ the set of bounded vectors. Define the Fock space associated to $\mathcal H$ by $$\mathcal F_M(\mathcal H)=\text{L}^2(M)\oplus\Big(\bigoplus_{n=1}^{\infty}\mathcal H^{\otimes^n_M}\Big).$$ For every $\xi\in\mathcal H^0$, we define $\ell(\xi)\in\mathbb B(\mathcal F_M(\mathcal H))$ by letting $\ell(\xi)(x)=\xi x$ and $$\text{$\ell(\xi)(\xi_1\otimes_M\cdots\otimes_M\xi_n)=\xi\otimes_M\xi_1\otimes_M\cdots\otimes_M\xi_n$, for every $x\in M$ and $\xi_1,\cdots,\xi_n\in\mathcal H^0$.}$$ 
Then $\ell(\xi)^*(x)=0$ and $$\text{$\ell(\xi)^*(\xi_1\otimes_M\cdots\otimes_M\xi_n)=\langle\xi,\xi_1\rangle_M\xi_2\otimes_M\cdots\otimes_M\xi_n$, for every $x\in M$ and $\xi_1,\cdots,\xi_n\in\mathcal H^0$.}$$

For $\xi\in\mathcal H^0$ with $J(\xi)=\xi$, we denote $s(\xi)=\ell(\xi)+\ell(\xi)^*$. Since $\ell(\xi)^*\ell(\xi)=\langle \xi,\xi\rangle_M$, we get that $\|\ell(\xi)\|=\|\langle\xi,\xi\rangle_M\|^{1/2}$ and thus $\|s(\xi)\|\leq 2\|\langle\xi,\xi\rangle_M\|^{1/2}.$

Shlyakhtenko's $M$-valued semicircular system associated to $\mathcal H$ is then defined as
$$\Gamma(M,\mathcal H)'':=\Big(M\cup\{s(\xi)\mid \xi\in \mathcal H^0, J(\xi)=\xi\}\Big)''\subset\mathbb B(\mathcal F_M(\mathcal H)).$$
Let $\Omega\in\mathcal F_M(\mathcal H)$ be the vacuum unit vector given by $\Omega=1\in\text{L}^2(M)$. Then  $\tau:\Gamma(M,\mathcal H)''\rightarrow\mathbb C$ given by $\tau(x)=\langle x\Omega,\Omega\rangle$ is a faithful normal tracial state (see \cite{Sh97} and \cite[Proposition 3.2]{KV15}) The map $\Gamma(M,\mathcal H)''\ni x\mapsto x\Omega\in\mathcal F_M(\mathcal H)$ extends to a unitary operator $\text{L}^2(\Gamma(M,\mathcal H)'')\rightarrow \mathcal F_M(\mathcal H)$.

 \end{definition}


\begin{lemma}\label{SOTapprox}
In the above notation, let  $\cH_1 \subseteq \cH^0$ be a dense subspace such that $M \cH_1 M \subset \cH_1$. Then for every $\eta \in \cH^0$ with $J \eta = \eta$, there exists a net $(\eta_n) \subset \cH_1$ such that $||\eta_n - \eta || \to 0$, $\sup_n ||s(\eta_n)|| < \infty$ and $s(\eta_n) \to s(\eta)$ in the SOT.
\end{lemma}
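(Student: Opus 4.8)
The goal is, given $\eta \in \mathcal H^0$ with $J\eta = \eta$, to produce a net $(\eta_n)$ in the ideal-like dense subspace $\mathcal H_1$ that converges to $\eta$ in norm, has uniformly bounded creation operators, and satisfies $s(\eta_n) \to s(\eta)$ in the strong operator topology. The starting point is to pick \emph{any} net $(\zeta_n) \subset \mathcal H_1$ with $\|\zeta_n - \eta\| \to 0$, which exists since $\mathcal H_1$ is dense in $\mathcal H^0$ (and $\mathcal H^0$ is dense in $\mathcal H$). The two problems with $\zeta_n$ are: (i) it need not be $J$-invariant, and (ii) $\|\langle \zeta_n, \zeta_n\rangle_M\|$ — which controls $\|s(\zeta_n)\|$ via $\|s(\zeta_n)\| \le 2\|\langle\zeta_n,\zeta_n\rangle_M\|^{1/2}$ — need not be bounded, because norm-convergence in $\mathcal H$ does not control the $M$-valued inner product (equivalently the bound $C$ with $\|\zeta_n x\| \le C\|x\|_2$) uniformly.

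\textbf{Fixing $J$-invariance and boundedness.} First I would symmetrize: replace $\zeta_n$ by $\tfrac12(\zeta_n + J\zeta_n)$. Since $J$ is anti-unitary with $J\eta = \eta$, this still lies in $\mathcal H_1$ (as $J\mathcal H_1 \subseteq \mathcal H_1$: indeed $J(x\xi y) = y^*(J\xi)x^* \in M\mathcal H_1 M \subseteq \mathcal H_1$, and $J$ is conjugate-linear, so $J$ preserves the subspace $\mathcal H_1$), is $J$-invariant, and still converges in norm to $\tfrac12(\eta + J\eta) = \eta$. So we may assume $J\zeta_n = \zeta_n$. To force boundedness of $\langle\zeta_n,\zeta_n\rangle_M$, I would use a spectral cutoff by projections in $M$, in the spirit of the argument in the proof of Lemma \ref{subtracial}: for a projection $p \in M$, the vector $p\zeta_n p$ lies in $M\mathcal H_1 M \subseteq \mathcal H_1$, is still $J$-invariant when $p$ is a projection (since $J(p\zeta_n p) = p^*(J\zeta_n)p^* = p\zeta_n p$), and satisfies $\langle p\zeta_n p, p\zeta_n p\rangle_M = p\langle\zeta_n, \zeta_n\rangle_M p$ — no, more carefully, one wants to bound $\langle \zeta_n p, \zeta_n p\rangle_M = p^*\langle\zeta_n,\zeta_n\rangle_M p$ only after also controlling the left action. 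The cleanest route: let $q_n$ be the spectral projection of $\langle \zeta_n, \zeta_n\rangle_M$ for $[0, 2\|\langle\eta,\eta\rangle_M\|]$; then $\langle \zeta_n q_n, \zeta_n q_n\rangle_M = q_n\langle\zeta_n,\zeta_n\rangle_M q_n$ has norm $\le 2\|\langle\eta,\eta\rangle_M\|$, and $\|\zeta_n q_n - \zeta_n\| = \|\zeta_n(1-q_n)\| = \|(1-q_n)\langle\zeta_n,\zeta_n\rangle_M(1-q_n)\|^{1/2}\cdot(\text{something})$ is small once $\langle\zeta_n,\zeta_n\rangle_M \to \langle\eta,\eta\rangle_M$ in an appropriate sense; then symmetrize $\zeta_n q_n$ by replacing it with $q_n \zeta_n q_n$ (which is $J$-invariant when $q_n$ is a projection) and argue the defect $q_n\zeta_n q_n - \zeta_n$ is small. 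This is the step I expect to require the most care: reconciling (a) $J$-invariance, (b) membership in $\mathcal H_1$, and (c) the uniform bound on $\langle\cdot,\cdot\rangle_M$, while (d) keeping norm-convergence. The key technical input is that $\zeta_n \to \eta$ in norm implies $\langle \zeta_n, \zeta_n\rangle_M \to \langle\eta,\eta\rangle_M$ only weakly/in a weak sense in general — but one can first extract, or use that $\eta$ is bounded so $\langle\eta,\eta\rangle_M$ is a genuine bounded operator, and choose the spectral cutoff generously (at level $2\|\langle\eta,\eta\rangle_M\| + 1$, say) so that eventually $1 - q_n$ is small.

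\textbf{From norm-convergence to SOT-convergence of $s(\eta_n)$.} Once we have a net $(\eta_n) \subset \mathcal H_1$ with $J\eta_n = \eta_n$, $\|\eta_n - \eta\| \to 0$ and $\sup_n \|\langle\eta_n,\eta_n\rangle_M\| =: C^2 < \infty$ (hence $\sup_n \|s(\eta_n)\| \le 2C < \infty$), the final step is routine: to show $s(\eta_n) \to s(\eta)$ in SOT it suffices, by the uniform bound $\sup_n \|s(\eta_n) - s(\eta)\| \le 4C$ and a standard $2\varepsilon$-argument, to check convergence on a dense subset of $\mathcal F_M(\mathcal H)$, namely on vectors of the form $\xi_1 \otimes_M \cdots \otimes_M \xi_k$ with each $\xi_j$ bounded (and on $\text{L}^2(M)$). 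On such a vector, $s(\eta_n) = \ell(\eta_n) + \ell(\eta_n)^*$ acts by $\ell(\eta_n)(\xi_1\otimes\cdots\otimes\xi_k) = \eta_n \otimes \xi_1 \otimes \cdots \otimes \xi_k$ and $\ell(\eta_n)^*(\xi_1\otimes\cdots\otimes\xi_k) = \langle\eta_n,\xi_1\rangle_M\,\xi_2\otimes\cdots\otimes\xi_k$; using Remark \ref{density} (the estimate $\|\zeta \otimes \mu\| \le \|\langle\zeta,\zeta\rangle_M\|^{1/2}\|\mu\|$ and its right-sided analogue), the difference $\|(\ell(\eta_n) - \ell(\eta))(\xi_1\otimes\cdots\otimes\xi_k)\| = \|(\eta_n - \eta)\otimes\xi_1\otimes\cdots\otimes\xi_k\|$ is controlled by $\|\eta_n - \eta\|$ times a constant depending on the bounded vectors $\xi_j$, hence $\to 0$; and for the annihilation part, $\|\langle\eta_n - \eta, \xi_1\rangle_M\|$ in $\|\cdot\|_2$ is bounded by $\|T_{\eta_n} - T_\eta\|\,\|\xi_1\|$-type estimates, which again goes to $0$ from $\|\eta_n - \eta\| \to 0$ together with the uniform boundedness giving $\|\langle\eta_n - \eta, \xi_1\rangle_M\|_2 \le \|(\eta_n-\eta)\|\cdot\|\langle\xi_1,\xi_1\rangle_M\|^{1/2}$. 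Assembling these gives $\|(s(\eta_n) - s(\eta))\nu\| \to 0$ for every $\nu$ in the dense subset, completing the proof.
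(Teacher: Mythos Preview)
Your overall strategy matches the paper's: approximate $\eta$ by arbitrary $\zeta_n \in \mathcal H_1$, modify $\zeta_n$ inside $M\mathcal H_1 M \subset \mathcal H_1$ by a spectral-type cutoff to force a uniform bound on the $M$-valued inner product, then deduce SOT convergence from uniform boundedness plus norm convergence on a total set. There are two issues. The $J$-invariance detour is unnecessary (the lemma does not demand $J\eta_n = \eta_n$, and the paper's proof does not arrange it), and your argument for $J\mathcal H_1 \subset \mathcal H_1$ is circular: $J(x\xi y) = y^*(J\xi)x^*$ lands in $M\mathcal H_1 M$ only once you already know $J\xi \in \mathcal H_1$; the hypothesis $M\mathcal H_1 M \subset \mathcal H_1$ alone does not give $J$-stability.

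The real gap is the cutoff estimate, which you flag but do not supply. You assert that $\langle\zeta_n,\zeta_n\rangle_M \to \langle\eta,\eta\rangle_M$ holds ``only weakly/in a weak sense'' and then hope a generous threshold makes $1-q_n$ small; the mechanism is missing. In fact the convergence holds in $\|\cdot\|_1$: from $\tau(x\,{}_M\langle a,b\rangle)=\langle xa,b\rangle$ one gets $\|{}_M\langle a,b\rangle\|_1 \le \|a\|\,\|b\|$, hence $\|{}_M\langle\zeta_n,\zeta_n\rangle - {}_M\langle\eta,\eta\rangle\|_1 \le (\|\zeta_n\|+\|\eta\|)\|\zeta_n-\eta\| \to 0$ (and likewise for $\langle\cdot,\cdot\rangle_M$), and this is exactly what controls $\|\zeta_n(1-q_n)\|$. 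The paper normalizes so that $\|T_\eta\|\le 1$ and, instead of projections, multiplies two-sidedly by the continuous function $f(t)=\min(1,t^{-1/2})$ of the left and right $M$-valued inner products: $\eta_n := f({}_M\langle\zeta_n,\zeta_n\rangle)\,\zeta_n\,f(\langle\zeta_n,\zeta_n\rangle_M)$ is then subtracial (so $\|s(\eta_n)\|\le 2$), and the pointwise inequality $(1-f(t))^2 t \le 1_{[1,\infty)}(t)(t-1)$ combined with the $L^1$-estimate gives $\|\eta_n-\zeta_n\|\to 0$. Your projection variant can be completed along the same lines once this $L^1$-convergence is recognized.
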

\begin{proof}
Since $\eta \in \cH^0$, we can assume $||T_{\eta}|| \le 1$. Let $(\xi_n) \subset \cH_1$ such that $||\xi_n - \eta|| \to 0$, and define $s_n  = f(_M\langle \xi_n, \xi_n \rangle),t_n = f(\langle \xi_n, \xi_n \rangle_M)$ respectively, where $f(t) = \begin{cases} 1 & \text{if $0\le t \le1$} \\ t^{-\frac{1}{2}} & \text{if $t \ge 1$} 
\end{cases}$. 

Let $\eta_n = s_n \xi_n t_n \in M\cH_1M\subset\cH_1$. Then $\eta_n$ is subtracial and thus $\|s(\eta_n)\|\leq 2$, for every $n$. We claim that  $||\eta_n -\xi_n||\to 0 $.
To this end, note that $$||\eta_n -\xi_n|| \le ||(1-s_n) \xi_n || + || s_n \xi_n (1-t_n) ||\leq || (1-s_n)\xi_n ||+|| \xi_n(1-t_n) ||.$$ Let  $p_n$ be the spectral projection  of $ _M\langle \xi_n, \xi_n  \rangle$ corresponding to the interval $[1, \infty)$. Since we have that
$1_{[1,\infty)}(t) (t-1) \ge (1-f(t))^2t$, we get that  $(1-s_n)^2 _M\langle \xi_n, \xi_n \rangle\leq p_n(_M\langle\xi_n,\xi_n\rangle-1)$ and therefore
\begin{align*}
    ||(1-s_n) \xi_n || ^2 &= \tau( (1-s_n)^2 _M\langle \xi_n, \xi_n \rangle)
\\
&\le \tau(p_n  {}_M\langle \xi_n, \xi_n \rangle - p_n  )\\
&\le \tau(p_n ( {}_M\langle \xi_n, \xi_n \rangle - {}_M\langle \eta, \eta \rangle  ) ) \\
&\le ||  {}_M\langle \xi_n, \xi_n \rangle - {}_M\langle \eta, \eta \rangle ||_1 \\
&\le ||{}_M\langle \xi_n, \xi_n - \eta \rangle ||_1 + || {}_M\langle \xi_n - \eta, \eta \rangle||_1\\
&\le (||\xi_n -\eta||_2 + 2|| \eta||_2)||\xi_n -\eta||_2,
\end{align*}
 where we used that  $\tau(p_n{}_M\langle\eta,\eta\rangle)\leq\tau(p_n)$ as ${}_M\langle \eta, \eta \rangle\leq 1$. Similarly, $$|| \xi_n (1-t_n)||^2 \le (||\xi_n -\eta||_2 + 2|| \eta||_2)||\xi_n -\eta||_2.$$ Thus,
 $||\eta_n -\xi_n|| \le 2 (||\xi_n -\eta||_2 + 2|| \eta||_2)^{1/2}||\xi_n -\eta||_2^{1/2} $, which implies the claim.
 The claim gives $\|\eta_n-\eta\|\rightarrow 0$. As $\sup_n\|s(\eta_n)\|<\infty$, it follows that $s(\eta_n) \to s(\eta)$ in the SOT.
\end{proof}

\begin{lemma}
In the above notation, let 
$\mathcal H^1\subset\mathcal H^0$ be a dense subspace such that  $J(\mathcal H^1)=\mathcal H^1$. Then 
$\Gamma(M,\mathcal H)''=\Big(M\cup\{s(\xi)\mid \xi\in \mathcal H^1, J(\xi)=\xi\}\Big)''$. 

Moreover, let $\{\xi_i\}_{i\in I}\subset\mathcal H^0$ be a family of vectors such that $J(\xi_i)=\xi_i$, for every $i\in I$, and the span of $\{M\xi_iM\}_{i\in I}$ is dense in $\mathcal H$. Then $\Gamma(M,\mathcal H)''=\Big(M\cup\{s(\xi_i)\mid i\in I\}\Big)''$.
\end{lemma}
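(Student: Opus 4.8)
The statement asserts two facts: first that $\Gamma(M,\mathcal H)''$ is generated by $M$ together with the semicircular elements $s(\xi)$ for $\xi$ ranging over a \emph{dense} $J$-invariant subspace $\mathcal H^1\subset\mathcal H^0$; second, the finer statement that it suffices to take $s(\xi_i)$ for a family $\{\xi_i\}_{i\in I}$ whose two-sided $M$-span is dense. In both cases the point is that we already have $\Gamma(M,\mathcal H)''=\big(M\cup\{s(\xi)\mid \xi\in\mathcal H^0,\,J\xi=\xi\}\big)''$ from the definition, so the only thing to prove is that every $s(\eta)$ with $\eta\in\mathcal H^0$, $J\eta=\eta$, already lies in the von Neumann algebra generated by $M$ and the smaller set. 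The natural tool is Lemma \ref{SOTapprox}: since the von Neumann algebra generated by a set is closed in the SOT, it is enough to produce, for each such $\eta$, a uniformly bounded net of elements of the candidate algebra converging to $s(\eta)$ in the SOT.

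\textbf{First assertion.} Let $N_1:=\big(M\cup\{s(\xi)\mid \xi\in\mathcal H^1,\,J\xi=\xi\}\big)''$; clearly $N_1\subseteq\Gamma(M,\mathcal H)''$. For the reverse inclusion, fix $\eta\in\mathcal H^0$ with $J\eta=\eta$. Here I would like to apply Lemma \ref{SOTapprox} with the dense subspace $\mathcal H^1$, but that lemma requires the extra hypothesis $M\mathcal H^1M\subset\mathcal H^1$, which a general dense $J$-invariant subspace need not satisfy. The fix is to pass to $\mathcal H_1:=\mathrm{span}(M\mathcal H^1M)$, which is still dense in $\mathcal H^0$ (it contains $\mathcal H^1$), is $M$-bi-invariant by construction, and is $J$-invariant since $J(x\xi y)=y^*(J\xi)x^*$ and $\mathcal H^1$ is $J$-invariant. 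Moreover every $s(x\xi y)$ with $x\xi y=J(x\xi y)$ lies in $N_1$: indeed one checks $\ell(x\xi y)=x\,\ell(\xi)\,y$ on the Fock space (using that left-bounded vectors satisfy $\ell(x\xi)=x\ell(\xi)$ and $\ell(\xi y)=\ell(\xi)y$, the latter because the right $M$-action on $\mathcal H^{\otimes_M^n}$ is compatible with the tensor leg), hence $s(x\xi y)=\ell(x\xi y)+\ell(x\xi y)^*\in N_1$ whenever it is self-adjoint under $s(\cdot)$; and general self-adjoint vectors of $\mathcal H_1$ are real-linear combinations of such, each of which produces an element of $N_1$ after symmetrizing with $J$ (replace $\zeta$ by $\tfrac12(\zeta+J\zeta)$, noting $s(\zeta)+s(J\zeta)$ need not directly make sense, so instead work with $s(\mathrm{Re}\,\zeta)$ in the obvious way). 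The cleanest route is: $\mathcal H_1$ is spanned over $\mathbb C$ by $J$-invariant vectors (if $\zeta\in\mathcal H_1$ then $\zeta=\tfrac{\zeta+J\zeta}{2}+i\,\tfrac{\zeta-J\zeta}{2i}$ with both summands $J$-invariant and in $\mathcal H_1$), and for each $J$-invariant $\zeta\in\mathcal H_1$ we have $s(\zeta)\in N_1$ because $\zeta$ is a real-linear combination of the generating $J$-symmetrizations of elements $x\xi y$. Then Lemma \ref{SOTapprox}, applied with $\mathcal H_1$ as the dense $M$-bi-invariant subspace, gives a uniformly bounded net $\eta_n\in\mathcal H_1$ with $s(\eta_n)\to s(\eta)$ in the SOT; since each $s(\eta_n)\in N_1$ and $N_1$ is SOT-closed, $s(\eta)\in N_1$. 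As $\eta$ was arbitrary, $\Gamma(M,\mathcal H)''\subseteq N_1$.

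\textbf{Second assertion.} Now let $\{\xi_i\}_{i\in I}\subset\mathcal H^0$ with each $J\xi_i=\xi_i$ and $\mathrm{span}\{M\xi_iM\}_{i\in I}$ dense in $\mathcal H$, and set $N_2:=\big(M\cup\{s(\xi_i)\mid i\in I\}\big)''$. The subspace $\mathcal H_1:=\mathrm{span}\{M\xi_iM\}_{i\in I}$ is dense, $M$-bi-invariant, and $J$-invariant (using $J\xi_i=\xi_i$ and the bimodularity of $J$). By the same argument as above, every $J$-invariant vector of $\mathcal H_1$ produces via $s(\cdot)$ an element of $N_2$: $s(x\xi_i y)=x\,s(\xi_i)\,y$-type expressions — more precisely $\ell(x\xi_iy)=x\ell(\xi_i)y$, so $s(x\xi_iy)=x\ell(\xi_i)y+(x\ell(\xi_i)y)^*=x\ell(\xi_i)y+y^*\ell(\xi_i)^*x^*\in N_2$, and this is exactly $s(x\xi_iy)$ when $x\xi_iy$ is $J$-invariant; the general $J$-invariant element of $\mathcal H_1$ is a real-linear combination of such after $J$-symmetrization, landing in $N_2$. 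Then Lemma \ref{SOTapprox} with this $\mathcal H_1$ gives $s(\eta)\in N_2$ for every $J$-invariant $\eta\in\mathcal H^0$, hence $\Gamma(M,\mathcal H)''\subseteq N_2$; the reverse containment is trivial.

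\textbf{Main obstacle.} The only genuinely delicate point is bookkeeping around the identity $\ell(x\xi y)=x\,\ell(\xi)\,y$ on $\mathcal F_M(\mathcal H)$ and the reduction from arbitrary self-adjoint vectors to $J$-invariant generators of the form $x\xi_iy$ — i.e.\ making sure the candidate algebra really does contain $s(\zeta)$ for all $J$-invariant $\zeta$ in the relevant dense bi-invariant subspace, not merely for the $\xi_i$ themselves. Once that is in hand, everything else is an immediate application of Lemma \ref{SOTapprox} and SOT-closedness of von Neumann algebras. A small technical caveat to state carefully: $x\ell(\xi)y$ is bounded because $\xi\in\mathcal H^0$, and $s(x\xi y)$ is well-defined since $x\xi y\in\mathcal H^0$ as $\mathcal H^0$ is an $M$-sub-bimodule.
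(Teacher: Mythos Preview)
Your overall strategy is exactly the paper's: pass to the $M$-bi-invariant span, check that $s(\zeta)$ lies in the candidate algebra for every $J$-invariant $\zeta$ in that span, and then invoke Lemma~\ref{SOTapprox}. The paper's one-line proof of the first assertion is precisely ``apply Lemma~\ref{SOTapprox} to $M\mathcal H^1M$'', and for the second assertion it carries out the same reduction you outline.

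There is, however, a genuine error in the step you yourself flag as the ``main obstacle''. You write
\[
s(x\xi_iy)=x\ell(\xi_i)y+y^*\ell(\xi_i)^*x^*\in N_2,
\]
but this does not follow: the creation operator $\ell(\xi_i)$ is \emph{not} in $N_2$ (indeed it is not even in $\Gamma(M,\mathcal H)''$), so you cannot conclude that $x\ell(\xi_i)y+y^*\ell(\xi_i)^*x^*$ lies in the von Neumann algebra generated by $M$ and $s(\xi_i)=\ell(\xi_i)+\ell(\xi_i)^*$. The same issue appears in your first-assertion argument.

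The fix is short. A $J$-invariant element of $\mathrm{span}\{M\xi_iM\}$ is a \emph{real}-linear combination of vectors of the form $v\xi_iw^*+w\xi_iv^*$ (write $\zeta=\tfrac12(\zeta+J\zeta)$ term by term), and for these one computes directly
\[
s(v\xi_iw^*+w\xi_iv^*)=v\ell(\xi_i)w^*+w\ell(\xi_i)v^*+w\ell(\xi_i)^*v^*+v\ell(\xi_i)^*w^*=v\,s(\xi_i)\,w^*+w\,s(\xi_i)\,v^*\in N_2.
\]
The paper phrases this via the polarization identity, reducing further to $y\xi_iy^*$ (where $s(y\xi_iy^*)=y\,s(\xi_i)\,y^*$ is immediate), but either route works. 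Once this computation is in place, your argument goes through.
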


\begin{proof} 
The main assertion follows by applying Lemma \ref{SOTapprox} to  $M\cH^1M$. 

We give an alternative proof of the main assertion which does not rely on Lemma \ref{SOTapprox}.
We let $\mathcal K\subset\mathcal F_M(\mathcal H)$ be the span of $\text{L}^2(M)\cup(\bigcup_{n\geq 1}\{\xi_1\otimes_M\cdots\otimes_M\xi_n\mid\xi_1,\cdots,\xi_n\in\mathcal H^1\})$ and denote $\mathcal M=\Big(M\cup\{s(\xi)\mid \xi\in \mathcal H^1, J(\xi)=\xi\}\Big)''.$
We claim that $\mathcal K\subset \mathcal M\Omega$. To this end, for $n\geq 1$, denote by $\mathcal K_n\subset\mathcal F_M(\mathcal H)$ the span of $\text{L}^2(M)\cup(\bigcup_{1\leq k\leq n}\{\xi_1\otimes_M\cdots\otimes_M\xi_k\mid\xi_1,\cdots,\xi_k\in\mathcal H^1\})$. Proceeding by induction, assume that $\mathcal K_n\subset\mathcal M\Omega$, for $n\geq 1$.
If $\xi,\xi_1,\cdots,\xi_n\in\mathcal H^1$ and $J(\xi)=\xi$, then 
$\xi\otimes_M\xi_1\otimes_M\cdots\otimes_M\xi_n=s(\xi)(\xi_1\otimes_M\cdots\otimes_M\xi_n)-\langle\xi,\xi_1\rangle_M(\xi_2\otimes_M\cdots\otimes_M\xi_n)\in s(\xi)\mathcal M\Omega+M(\mathcal M\Omega)=\mathcal M\Omega$. 
Since every $\eta\in\mathcal H^1$ can be written as $\eta=\eta_1+i\eta_2$, where $\eta_1=(\eta+J(\eta))/2$ and $\eta_2=(\eta-J(\eta))/(2i)$ satisfy $J(\eta_1)=\eta_1$ and $J(\eta_2)=\eta_2$, this proves the claim.
Since $\mathcal H^1$ is dense in $\mathcal H^0$, Remark \ref{density} implies that $\mathcal K$ is dense $\mathcal F_M(\mathcal H)$. Using the  claim, we deduce that $\mathcal M\Omega$ is dense in $\mathcal F_M(\mathcal H)$. Hence, $\mathcal M$ is dense in $\text{L}^2(\Gamma(M,\mathcal H)'')$ and therefore $\mathcal M=\Gamma(M,\mathcal H)''$. This proves the main assertion.

Let $\mathcal H^2\subset\mathcal H^0$ be the span of $\{M\xi_iM\}_{i\in I}$. Then $J(\mathcal H^2)=\mathcal H^2$. Thus, the main assertion implies that $\Gamma(M,\mathcal H)''$ is equal to $\Big(M\cup\{s(\xi)\mid \xi\in \mathcal H^2, J(\xi)=\xi\}\Big)''$.
Denote $\mathcal N=\Big(M\cup\{s(\xi_i)\mid i\in I\}\Big)''$. Since $\ell(x\xi)=x\ell(\xi)$, $\ell(\xi x)=\ell(\xi) x$, and $J(x\xi x^*)=xJ(\xi) x^*$, for every $x\in M$ and $\xi\in\mathcal H^0$, we get that $s(y\xi_i y^*)=ys(\xi_i)y^*\in\mathcal N$, for every $y\in M$ and $i\in I$.  Note that $\{\xi\in\mathcal H^2\mid J(\xi)=\xi\}$ is equal to the linear span of $\{v\xi_i w^*+w\xi_i v^*\mid v,w\in M, i\in I\}$ and further that of $\{y\xi_iy^*\mid y\in M, i\in I\}$. Since $s(\xi_1+\xi_2)=s(\xi_1)+s(\xi_2)$, for every $\xi_1,\xi_2\in\mathcal H^0$, we conclude that $s(\xi)\in\mathcal N$, for every $\xi\in\mathcal H^2$ with $J(\xi)=\xi$. Since $\mathcal N$ also contains $M$, we get that $\mathcal M=\Gamma(M,\mathcal H)''$.
\end{proof}

An ultrafilter  on a directed set $(I,\leq)$ is called
{\it cofinal} if it contains $\{i\in I\mid i\geq i_0\}$, for every $i_0\in I$. 
If $(x_i)_{i\in I}\subset\mathbb C$ is a net such that $\lim_i x_i=x$ and $\mathcal U$ is a cofinal ultrafilter on $I$, then $\lim_{i\rightarrow\mathcal U}x_i=x$. 
We now arrive at the main result of this section.

\begin{lemma}\label{ultraproduct}
Let $(M,\tau)$ be a tracial von Neumann algebra, $\Phi:M\rightarrow M$ a subunital, symmetric completely positive map, and $\Phi_i:M\rightarrow M$, $i\in I$, a net of subunital, symmetric completely positive maps such that $\|\Phi_i(x)-\Phi(x)\|_2\rightarrow 0$, for every $x\in M$. Let $\mathcal U$ be a cofinal ultrafilter on $I$. Then there is a trace preserving $*$-homomorphism $\pi:\Gamma(M,\mathcal H_{\Phi})''\rightarrow\prod_{\mathcal U}\Gamma(M,\mathcal H_{\Phi_i})''$ such that $\pi_{|M}=\emph{Id}_M$ and $\pi(s(\xi_\Phi))=(s(\xi_{\Phi_i}))$.
Moreover, $\mathcal H_\Phi\subset \emph{L}^2(\prod_{\mathcal U}\Gamma(M,\mathcal H_{\Phi_i})'')\ominus\emph{L}^2(M^\mathcal U)$, as an $M$-bimodule.

\end{lemma}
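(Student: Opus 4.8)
The strategy is to verify the hypotheses of Lemma~\ref{hom} for the generating set of $\Gamma(M,\mathcal H_\Phi)''$ provided by the previous lemma, namely $M\cup\{s(\xi_\Phi)\}$, with approximants $M\cup\{s(\xi_{\Phi_i})\}$ inside the $\Gamma(M,\mathcal H_{\Phi_i})''$'s. First I would observe that $\|s(\xi_\Phi)\|\le 2\|\langle\xi_\Phi,\xi_\Phi\rangle_M\|^{1/2}=2\|\Phi(1)\|^{1/2}\le 2$ since $\Phi$ is subunital, using formula \eqref{Phi}, and likewise $\|s(\xi_{\Phi_i})\|\le 2$, so all relevant elements lie in the unit ball after rescaling by $1/2$; apply Lemma~\ref{hom} to the family $\{\tfrac12 s(\xi_{\Phi_i})\}\cup\{x: x\in(M)_1\}$. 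The main computation is then to check that for every $*$-monomial $p$ in the variables $X_0$ (to be evaluated at $s(\xi_\Phi)$ or $s(\xi_{\Phi_i})$) and $X_m,\ m\in M$ (to be evaluated at $m$, which is self-adjoint-closed since $(s(\xi))^*=s(\xi)$ for the $\Phi$-variable and $M$ is a $*$-algebra), one has
$$\lim_{i}\ \tau_{\Gamma(M,\mathcal H_{\Phi_i})''}\big(p(s(\xi_{\Phi_i}),\,\cdot\,)\big)=\tau_{\Gamma(M,\mathcal H_\Phi)''}\big(p(s(\xi_\Phi),\,\cdot\,)\big).$$

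\textbf{Evaluating the moments.} The key point is that the vacuum trace of a word $a_0 s(\xi) a_1 s(\xi)\cdots s(\xi) a_k$ in $\Gamma(M,\mathcal H)''$ (with $a_j\in M$) is a universal noncommutative $*$-polynomial expression in the $a_j$'s and in the $M$-valued inner products $\langle\xi,a\,\xi\rangle_M=\Phi(a)$, with the latter appearing only through $\tau$. Concretely, expanding each $s(\xi)=\ell(\xi)+\ell(\xi)^*$ and using that $\langle x\Omega,\Omega\rangle$ only picks up terms where all creation and annihilation operators cancel (a sum over non-crossing pairings of the $s(\xi)$-letters), each surviving term is a trace in $M$ of an alternating product of the $a_j$'s and the elements $\langle\xi,a_{r}a_{r+1}\cdots a_{s}\,\xi\rangle_M$. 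By \eqref{Phi} the latter equals $\Phi(a_r\cdots a_s)$ up to the intervening $M$-elements; more precisely $\langle\xi,x\xi y\rangle_M=\Phi(x)y$, so the whole expression is a fixed $\tau$-trace of a word built from the $a_j$'s and finitely many applications of $\Phi$. Since $\|\Phi_i(y)-\Phi(y)\|_2\to 0$ for every $y\in M$, and since $\tau$, multiplication by a fixed element of $(M)_1$, and $\Phi_i$ (which is a contraction on $\mathrm{L}^2$, being subunital and completely positive, hence $\|\Phi_i(y)\|_2\le\|y\|_2$) are all $\|\cdot\|_2$-continuous, every such finite expression in the $\Phi_i$'s converges in $\|\cdot\|_2$, and hence its $\tau$-value converges, to the corresponding expression in $\Phi$. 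This gives the moment convergence and Lemma~\ref{hom} produces the desired $\pi$ with $\pi_{|M}=\mathrm{Id}_M$ and $\pi(s(\xi_\Phi))=(s(\xi_{\Phi_i}))$. (The symmetry hypothesis on $\Phi$ and the $\Phi_i$ is what guarantees $\mathcal H_\Phi$ and $\mathcal H_{\Phi_i}$ are genuinely symmetric $M$-bimodules, so that $\Gamma(M,-)''$ and the $s(\xi)$ are defined; it also makes $\langle\xi_\Phi,\xi_\Phi\rangle_M=\langle\xi_\Phi,\xi_\Phi\rangle_M^*$, etc.)

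\textbf{The bimodule statement.} For the ``moreover'', set $N=\prod_{\mathcal U}\Gamma(M,\mathcal H_{\Phi_i})''$. Via $\pi$ we get that $\mathrm{L}^2(\Gamma(M,\mathcal H_\Phi)'')$ embeds $M$-bimodularly into $\mathrm{L}^2(N)$, carrying the vacuum $\Omega$ to $1\in\mathrm{L}^2(N)$ and $\mathcal H_\Phi$ (identified with the degree-one part $\overline{\mathrm{span}}(M\xi_\Phi M)\subset\mathrm{L}^2(\Gamma(M,\mathcal H_\Phi)'')$) to the closed $M$-bimodule generated by $\pi(s(\xi_\Phi))\Omega$. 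It remains to check this image is orthogonal to $\mathrm{L}^2(M^{\mathcal U})\subset\mathrm{L}^2(N)$. Since $\mathrm{E}_{M^{\mathcal U}}$ on $N$ is the $\mathcal U$-limit of the conditional expectations $\mathrm{E}_M$ on each $\Gamma(M,\mathcal H_{\Phi_i})''$, and since for any $a,b\in M$ one has $\mathrm{E}_M\big(a\,s(\xi_{\Phi_i})\,b\big)=a\,\mathrm{E}_M(s(\xi_{\Phi_i}))\,b=0$ because $s(\xi_{\Phi_i})\Omega=\xi_{\Phi_i}\in\mathcal H_{\Phi_i}$ has no $\mathrm{L}^2(M)$-component (the Fock decomposition is orthogonal), we get $\mathrm{E}_{M^{\mathcal U}}\big(\pi(a\,s(\xi_\Phi)\,b)\big)=0$; by $M$-bimodularity and density this forces $\mathcal H_\Phi\perp \mathrm{L}^2(M^{\mathcal U})$, i.e. $\mathcal H_\Phi\subset \mathrm{L}^2(N)\ominus\mathrm{L}^2(M^{\mathcal U})$.

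\textbf{Main obstacle.} The one genuinely delicate point is making the moment computation in the second paragraph precise: one must argue that the vacuum expectation of an arbitrary word in $s(\xi)$ and elements of $M$ is a \emph{continuous} function of the data $(\Phi, \text{multiplication by fixed } a_j\in(M)_1)$ in the appropriate sense. The clean way is to induct on the number of $s(\xi)$-letters, repeatedly using $s(\xi)=\ell(\xi)+\ell(\xi)^*$, the relations $\ell(\xi)^*(\eta_1\otimes_M\cdots)=\langle\xi,\eta_1\rangle_M\eta_2\otimes_M\cdots$ and $\ell(\xi)^*x=0$, and the identity $\langle\xi_\Phi, x\xi_\Phi\rangle_M=\Phi(x)$, to reduce a word of length $n$ (in the $s(\xi)$'s) to an $M$-linear combination of words of length $n-2$ whose coefficients are products of $\Phi$-values — then apply the induction hypothesis and the $\|\cdot\|_2$-contractivity of $\Phi_i$. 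Everything else is bookkeeping with ultrafilters and the orthogonality of the Fock decomposition.
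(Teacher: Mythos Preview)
Your approach is correct and essentially the same as the paper's: the paper packages your moment computation into a separate Lemma~\ref{trace} (an inductive formula expressing $x_n s(\xi_\Phi)x_{n-1}\cdots x_1 s(\xi_\Phi)x_0\,\Omega$ in terms of the $x_j$'s and nested applications of $\Phi$), then argues convergence of the traces and the orthogonality $\pi(Ms(\xi_\Phi)M)\perp M^{\mathcal U}$ exactly as you outline. Two small points: Lemma~\ref{hom} as stated is for ultrapowers $M^{\mathcal U}$ rather than ultraproducts of varying algebras, so the paper argues directly instead of invoking it (the proof is identical); and your bound $\|\Phi_i(y)\|_2\le\|y\|_2$ requires subtraciality, which here follows from symmetric $+$ subunital, not from subunital $+$ completely positive alone.
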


In order to prove this result, we first introduce a definition and then establish a lemma.
\begin{definition}
Let $n\in\mathbb N$. 
 We define sets of formulas $S_{1}\subset\cdots\subset S_{n}$ involving non-commutative variables $X_0,X_1,\cdots,X_n$ and a function $\Psi$ which takes  values in an algebra and can be evaluated on all  polynomials in  $X_0,X_1,\cdots,X_n$,
as follows. Let $S_{1}=\{1,X_0,X_1\}$ and define inductively $S_{i}=\{X_{i}\}\cup S_{i-1}\cup\{X_i\Psi(a)b\mid a,b\in S_{i-1}\}$, for every $2\leq i\leq n$.
Given an algebra $M$, a function $\Phi:M\rightarrow M$ and an $(n+1)$-tuple ${\bf x}=(x_0,x_1,\cdots,x_n)\in M^{n+1}$, for every $f\in S_n$ we denote by $f^{\Phi}({\bf x})$
the element of $M$ obtained by replacing $X_0,X_1,\cdots,X_n$ with $x_0,x_1,\cdots,x_n$ and $\Psi$ with $\Phi$.

\end{definition}

\begin{lemma}\label{trace} There exist $\{\varepsilon(a)\mid a\in S_{n}\},\{\varepsilon(a_1,b_1,\cdots,a_k,b_k)\mid a_1,b_1,\cdots,a_k,b_k\in S_{n}\}\subset\mathbb Z$, for every $1\leq k\leq n$, such that the following holds.
 Let  $\Phi:M\rightarrow M$ be a normal, symmetric completely positive map, where $(M,\tau)$ is a tracial von Neumann algebra. Consider the associated  $M$-bimodule $(\mathcal H_{\Phi},\xi_{\Phi})$ 
and  the von Neumann algebra $\Gamma(M,\mathcal H_{\Phi})''\subset\mathbb B(\mathcal F_M(\mathcal H_{\Phi}))$. 
 Then for every  ${\bf x}=(x_0,x_1,\cdots,x_n)\in M^{n+1}$, the vector $x_ns(\xi_{\Phi})x_{n-1}\cdots x_1s(\xi_{\Phi})x_0\Omega\in\mathcal F_M(\mathcal H_{\Phi})$ is equal to
 $$\sum_{a\in S_{n}}\varepsilon(a)\; a^{\Phi}({\bf x})+ \sum_{k=1}^n\sum\limits_{\substack{a_0, a_1,b_1,\cdots,\\a_k,b_k\in S_{n}}}\varepsilon(a_0,a_1,b_1,\cdots,a_k,b_k)\; a_0^{\Phi}({\bf x}) (a_1^{\Phi}({\bf x})\xi_{\Phi}b_1^{\Phi}({\bf x}))\otimes\cdots\otimes (a_k^{\Phi}({\bf x})\xi_{\Phi}b_k^{\Phi}({\bf x})).$$
 In particular, $\tau(x_ns(\xi_{\Phi})x_{n-1}\cdots x_1s(\xi_{\Phi})x_0)=\sum_{a\in S_{n}}\varepsilon(a)\;\tau(a^{\Phi}({\bf x}))$.
\end{lemma}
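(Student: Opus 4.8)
The statement is purely combinatorial/algebraic: it asks us to expand the vector $x_n s(\xi_\Phi) x_{n-1} \cdots x_1 s(\xi_\Phi) x_0 \Omega$ in the Fock space $\mathcal F_M(\mathcal H_\Phi)$ and to observe that all the coefficients that arise are integers, and moreover that they are \emph{universal} — i.e.\ they depend only on $n$ and not on $M$, $\Phi$, or the chosen tuple $\mathbf x$. The plan is to proceed by induction on $n$, repeatedly applying the two defining relations for $s(\xi_\Phi) = \ell(\xi_\Phi) + \ell(\xi_\Phi)^*$ together with identity \eqref{Phi}, namely $\langle \xi_\Phi, x\xi_\Phi y\rangle_M = \Phi(x)y$, and the fact that $M$ acts on the left on each leg: $x\cdot(\eta_1\otimes_M\cdots\otimes_M\eta_k) = (x\eta_1)\otimes_M\cdots\otimes_M\eta_k$, while a scalar-from-the-left element of $M$ absorbs into the zeroth tensor leg via $a\cdot(\eta_1\otimes\cdots) $ — and crucially that $\xi_\Phi y = $ a vector whose inner products are governed by $\Phi$. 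The key point making the bookkeeping tractable is that, in $\mathcal H_\Phi$, we have $\overline{\mathrm{span}}(M\xi_\Phi M) = \mathcal H_\Phi$, so every vector appearing in the expansion is a finite sum of elementary tensors of the form $(a_1\xi_\Phi b_1)\otimes_M\cdots\otimes_M(a_k\xi_\Phi b_k)$ with $a_i,b_i$ built from $x_0,\dots,x_n$ and $\Phi$; the set $S_n$ is precisely engineered to be (a superset of) the collection of such ``words'' that can occur.

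Concretely, I would set up the induction so that the inductive hypothesis records the expansion of $x_j s(\xi_\Phi)x_{j-1}\cdots x_1 s(\xi_\Phi) x_0\Omega$ as an integer combination of terms $a^\Phi(\mathbf x) \cdot \big((a_1^\Phi\xi_\Phi b_1^\Phi)\otimes\cdots\otimes(a_k^\Phi\xi_\Phi b_k^\Phi)\big)$ with $a,a_i,b_i\in S_j$ and $0\le k\le j$, plus the ``$k=0$'' terms $a^\Phi(\mathbf x)\cdot\Omega \in \mathrm L^2(M)$. For the inductive step one multiplies on the left by $x_{j+1}s(\xi_\Phi) = x_{j+1}\ell(\xi_\Phi) + x_{j+1}\ell(\xi_\Phi)^*$. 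The creation operator $\ell(\xi_\Phi)$ prepends $\xi_\Phi$: a term with $k$ legs becomes a term with $k+1$ legs, with new leftmost leg $x_{j+1}\xi_\Phi$ (so $a_1 = X_{j+1}$, which is in $S_{j+1}$), and the old scalar factor $a$ gets pushed onto... here one must be careful: $x_{j+1}\ell(\xi_\Phi)(a\cdot \zeta) = x_{j+1}(\xi_\Phi \otimes_M (a\zeta))$? No — $a\in M$ acts on the left of the tensor, so $\ell(\xi_\Phi)(a\zeta) = \xi_\Phi\otimes_M(a\zeta) = (\xi_\Phi a)\otimes_M \zeta$ only if one moves $a$ across, which is not allowed in general. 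The correct statement is $\ell(\xi_\Phi)(a\zeta) = \xi_\Phi\otimes_M a\zeta$, and since $a$ sits on the first leg of $\zeta$, after prepending $\xi_\Phi$ it is still on the (now second) leg; this is the term $X_{j+1}\Psi(\cdot)\cdot$-type expression — and this is exactly why $S_i = \{X_i\}\cup S_{i-1}\cup\{X_i\Psi(a)b : a,b\in S_{i-1}\}$: when the annihilation operator $\ell(\xi_\Phi)^*$ hits, it produces $\langle\xi_\Phi, (\text{first leg})\rangle_M$ which by \eqref{Phi} becomes $\Phi(a_1^\Phi)b_1^\Phi$, an $M$-valued coefficient that then multiplies the rest; multiplying by $x_{j+1}$ on the left produces $x_{j+1}\Phi(a_1^\Phi)b_1^\Phi$, i.e.\ a word of the form $X_{j+1}\Psi(a)b$ — landing in $S_{j+1}$.

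The main obstacle, and where I'd spend the care, is precisely this bookkeeping: showing that the ``words'' generated under these two operations stay inside $S_{j+1}$ (closure of $S_n$ under the relevant operations), that the map $\mathbf x\mapsto$ (expansion) is genuinely universal — i.e.\ the integer coefficients $\varepsilon(\cdot)$ produced by the recursion are independent of $M,\Phi,\mathbf x$ — and that no non-integer scalars sneak in (they can't: the only scalars are the $\pm 1$'s from splitting $s(\xi_\Phi)$, and the $+1$ coefficients from the tensor/creation/annihilation relations, and everything else is an $M$-valued coefficient carried symbolically). Once the vector expansion is established, the ``in particular'' clause is immediate: take the inner product with $\Omega = 1\in\mathrm L^2(M)$, note that $\Omega$ is orthogonal to all the $\mathcal H^{\otimes^k_M}$ summands for $k\ge 1$, so only the $k=0$ terms $\sum_{a\in S_n}\varepsilon(a)\,a^\Phi(\mathbf x)\cdot\Omega$ survive, and $\langle a^\Phi(\mathbf x)\Omega,\Omega\rangle = \tau(a^\Phi(\mathbf x))$ by definition of the trace on $\Gamma(M,\mathcal H_\Phi)''$. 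I would also remark that symmetry of $\Phi$ (hence $J\xi_\Phi = \xi_\Phi$) is what guarantees $s(\xi_\Phi)$ is a well-defined self-adjoint element of $\Gamma(M,\mathcal H_\Phi)''$ and that $\xi_\Phi\in\mathcal H^0$ is bounded, which is needed for $\ell(\xi_\Phi)$ to be bounded and for all these manipulations to make sense.
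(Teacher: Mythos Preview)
Your approach is correct and is essentially the same as the paper's: both proceed by induction on $n$, applying $x_{j+1}s(\xi_\Phi)=x_{j+1}\ell(\xi_\Phi)+x_{j+1}\ell(\xi_\Phi)^*$ to the previously obtained expansion and using \eqref{Phi} to evaluate the annihilation term. The paper condenses the entire argument into the single computation
\[
x\,s(\xi_\Phi)\,(b\xi_\Phi c\otimes\eta)=x\xi_\Phi\otimes b\xi_\Phi c\otimes\eta+(x\Phi(b)c)\,\eta
\]
(together with $x\,s(\xi_\Phi)(a)=x\xi_\Phi a$ for the vacuum) and then declares the induction immediate; your write-up expands the bookkeeping that the paper leaves implicit, in particular the observation that annihilation produces exactly the words $X_{j+1}\Psi(a)b$ that motivate the recursive definition of $S_{j+1}$.

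One small wobble: you write that $\xi_\Phi\otimes_M(a\zeta)=(\xi_\Phi a)\otimes_M\zeta$ ``is not allowed in general.'' In the Connes tensor product over $M$ this identity \emph{is} always valid, since $a\in M$; but your decision to leave $a$ on the second leg is equally legitimate and does not affect the argument.
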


\begin{proof}
Let $x,a,b,c\in M$ and $\eta\in\mathcal H_{\Phi}^{\otimes_M^m}$, for $m\geq 1$. Then $xs(\xi_{\Phi})(a)=x\xi_{\Phi}a$ and \eqref{Phi} implies that $$xs(\xi_{\Phi})(b\xi_{\Phi}c\otimes\eta)=x\xi_{\Phi}\otimes b\xi_{\Phi}c\otimes\eta+x\langle\xi_{\Phi},b\xi_{\Phi}c\rangle_M\eta=x\xi_{\Phi}\otimes b\xi_{\Phi}c\otimes\eta+(x\Phi(b)c)\eta.$$
The proof is now immediate by induction.
\end{proof}

\begin{proof}[Proof of Lemma \ref{ultraproduct}]
Let $n\in\mathbb N$ and  ${\bf x}=(x_0,x_1,\cdots,x_n)\in M^{n+1}.$ 
Since $\|\Phi_i(y)-\Phi(y)\|_2\rightarrow 0$, for every $y\in M$, it follows that $\|a^{\Phi_i}({\bf x})-a^{\Phi}({\bf x})\|_2\rightarrow 0$, for every $a\in S_n$. By combining this fact with Lemma \ref{trace}, we conclude that 
\begin{equation}\label{tau}
    \text{$ \lim_i\tau(x_ns(\xi_{\Phi_i})x_{n-1}\cdots x_1s(\xi_{\Phi_i})x_0)=\tau(x_ns(\xi_{\Phi})x_{n-1}\cdots x_1s(\xi_{\Phi})x_0)$,  $\forall x_0,x_1,\cdots,x_n\in M$}.
\end{equation}
Since $\Phi_i$ is subunital,  $\|s(\xi_{\Phi_i})\|\leq 2$, for every $i\in I$.
This implies that \begin{equation}\label{bound}(x_ns(\xi_{\Phi_i})x_{n-1}\cdots x_1s(\xi_{\Phi_i})x_0)\in\prod_{\mathcal U}\Gamma(M,\mathcal H_{\Phi_i})'', \forall x_0,x_1,\cdots,x_n\in M.
\end{equation}
Since $\text{span}(M\xi_\Phi M)$ is dense in $\mathcal H_{\Phi}$, Lemma \ref{density} implies that the linear span of $$\{x_ns(\xi_{\Phi})x_{n-1}\cdots x_1s(\xi_{\Phi})x_0\mid x_0,x_1,\cdots,x_n\in M\}$$ is $\|\cdot\|_2$
-dense in $\Gamma(M,\mathcal H_{\Phi})''$. This fact, \eqref{tau} and \eqref{bound} together imply  the existence of a trace preserving $*$-homomorphism 
$\pi:\Gamma(M,\mathcal H_{\Phi})''\rightarrow\prod_{\mathcal U}\Gamma(M,\mathcal H_{\Phi_i})''$ such that $$\pi(x_ns(\xi_{\Phi})x_{n-1}\cdots x_1s(\xi_{\Phi})x_0)=(x_ns(\xi_{\Phi_i})x_{n-1}\cdots x_1s(\xi_{\Phi_i})x_0)$$ and $\pi(x)=x$,  for every  $x_0,x_1,\cdots,x_n,x\in M$. This finishes the proof of the main assertion.

 Finally, the definition of $\pi$ implies that we have $\pi(Ms(\xi_\Phi)M)\subset\prod_\mathcal U\Gamma(M,\mathcal H_{\Phi_i})''\ominus M^{\mathcal U}$. Since the $M$-bimodule $\overline{\text{span}(Ms(\xi_\Phi)M)}$ is isomorphic to $\mathcal H_\Phi$, the moreover assertion follows.
\end{proof}

\section{Proofs of main results}\label{proofs}

\subsection{Proof of Theorem \ref{ec}}
We will prove (1) in the case of existentially closed separable II$_1$ factors $M$, and that (1) $\Rightarrow$ (2) $\Rightarrow$ (3) $\Leftrightarrow$ (4), for general separable II$_1$ factors $M$.

(1) 
Assume that $M$ is an existentially closed separable II$_1$ factor. By Remark \ref{symmetrize}, for the purpose of proving (1), after replacing $\mathcal H$ with $\mathcal H\oplus\overline{\mathcal H}$, we may assume $\mathcal H$ to be a symmetric $M$-bimodule.
Put $N:=\Gamma(M,\mathcal H)''$. Since  $M\subset N$, we can find an embedding $\pi:N\rightarrow M^{\mathcal U}$, for some ultrafilter $\mathcal U$ on a set $I$, whose restriction $\pi_{|M}$ is the diagonal embedding of $M$. Moreover, if $\mathcal H$ is separable, then so is $N$, and we can take $I=\mathbb N$.
Then $\pi$ extends to an embedding of Hilbert $M$-bimodules $\text{L}^2(N)\subset \text{L}^2(M^\mathcal U)$. Since $\mathcal H\subset \text{L}^2(N)$, part (1) follows.

 (1) $\Rightarrow$ (2) Assume that (1) holds. Let $\mathcal H$ be an $M$-bimodule. Then $\mathcal H\subset\text{L}^2(M^{\mathcal U})$, for a free ultrafilter $\mathcal U$ on a set $I$.
 Fix $
(\xi_j)_{j=1}^k\subset\mathcal H$ such that $\|\xi_j\|_2\leq 1$ for every $1\leq j\leq k$, a finite set $F\subset (M)_1$  and $\varepsilon>0$.  Then we can find $\eta_j\in M^{\mathcal U}$ such that $\|\eta_j\|_2\leq 1$ and $\|\xi_j-\eta_j\|_2<\frac{\varepsilon}{2}$, for every $1\leq j\leq k$. Then  $|\langle x\xi_j y,\xi_{j'}\rangle-\langle x\eta_j y,\eta_{j'}\rangle|<\varepsilon$, for every $1\leq j, j'\leq k$ and $x,y\in F$. 
Represent $\eta_j=(\eta_{i,j})$, where $\eta_{i,j}\in M$, for every $i\in I,1\leq j\leq k$. Since $\langle x\eta_j y,\eta_{j'}\rangle=\lim\limits_{i\rightarrow\mathcal U}\langle x\eta_{i,j} y,\eta_{i,j'}\rangle$, there is $i\in I$ such that $$\text{$|\langle x\xi_j y,\xi_{j'}\rangle-\langle x\eta_{i,j} y,\eta_{i,j'}\rangle|<\varepsilon$, for every $x,y\in F$ and $1\leq j,j'\leq k$.}$$ This implies that $\mathcal H$ belongs to the closure of 
$\text{L}^2(M)$ in the Fell topology, thus proving (2).

(2) $\Rightarrow$ (3) This implication is obvious.

(3) $\Rightarrow$ (4) Assume that (3) holds.
Let $\Phi:M\rightarrow M$ be a normal completely positive map. Then $\mathcal H_{\Phi}\subset_{\text{weak}}\text{L}^2(M)$. Since $\mathcal P_M$ is  the set of coefficients of $\text{L}^2(M)^{\oplus\infty}$ and $\mathcal S_M$ is the set of subunital, subtracial coefficients of $\text{L}^2(M)^{\oplus\infty}$, applying Lemma \ref{trivialbimod} gives (4).

(4) $\Rightarrow$ (3) Assume that (4) holds. 
If $\mathcal H$ is an $M$-bimodule, then $\mathcal H=\bigoplus_i\overline{\text{span}(M\xi_iM)}$, for a family of bounded vectors $(\xi_i)\subset \mathcal H$. Thus, in order to prove (3), it suffices to argue that any $M$-bimodule of the form $\mathcal H=\overline{\text{span}(M\xi M)}$, for some bounded vector $\xi\in\mathcal H$, is weakly contained  in $\text{L}^2(M).$ We denote by $\Phi:=\Phi_\xi:M\rightarrow M$ the associated normal completely positive map. Then $\mathcal H$ is isomorphic to $\mathcal H_{\Phi}$.
Since (4) holds, we can find a sequence $(\Phi_n)$ of coefficients of $\text{L}^2(M)^{\oplus\infty}$ such that $\|\Phi_n(x)-\Phi(x)\|_2\rightarrow 0$, for every $x\in M$. Thus, $\mathcal H_{\Phi}\subset_{\text{weak}}\text{L}^2(M)$ and so $\mathcal H\subset_{\text{weak}}\text{L}^2(M)$.
\hfill$\square$

 Next we prove Theorem \ref{bimod} and then use it to deduce Corollary \ref{hyper}.
  
 \subsection{Proof of Theorem \ref{bimod}}
Let $\mathcal H,\mathcal K$ be $M$-bimodules such that $\mathcal H\subset_{\text{weak}}\mathcal K$. Let  $(\xi_j)_{j\in J}\subset\mathcal H$ be a family of non-zero subtracial vectors such that denoting $\mathcal H_j=\overline{\text{span}(M\xi_jM)}$, we have $\mathcal H=\bigoplus_{j\in J}\mathcal H_j$. For $j\in J$, consider the subunital, subtracial completely positive map $\Phi_j:=\Phi_{\xi_j}:M\rightarrow M$. 
 Then  $\mathcal H_{\Phi_j}$ is isomorphic to $\mathcal H_j$ and so $\mathcal H_{\Phi_j}\subset_{\text{weak}}\mathcal K$. By Lemma \ref{trivialbimod}, we can find a net of subunital, subtracial coefficients $(\Phi_{i,j})_{i\in I}$ of $\mathcal K^{\oplus\infty}$ such that $\lim_i\|\Phi_{i,j}(x)-\Phi_j(x)\|_2\rightarrow 0$, for every $x\in M$. Here, $I$ is the set of pairs $(F,\varepsilon)$, with $F\subset M$ finite and $\varepsilon>0$, ordered by $(I,\varepsilon)\leq (I',\varepsilon')$ if and only if $I\subset I'$ and $\varepsilon\geq\varepsilon'$. If $M$ is separable, we can take $I=\mathbb N$. 
 Let $\mathcal U$ be a cofinal ultrafilter on $I$.

Since $\mathcal H_{\Phi_{i,j}}\subset\mathcal K^{\oplus\infty}$, we have 
 a natural trace preserving embedding $\Gamma(M,\mathcal H_{\Phi_{i,j}})''\subset\Gamma(M,\mathcal K^{\oplus\infty})''$.
By Lemma \ref{ultraproduct}, we find a trace preserving $*$-homomorphism $\pi_j:\Gamma(M,\mathcal H_j)''\rightarrow (\Gamma(M,\mathcal K^{\oplus\infty})'')^{\mathcal U}$ such that $(\pi_j)_{|M}=\text{Id}_M$.
Note that if $(M_j,\tau_j)_{j\in J}$ are tracial von Neumann algebras containing $(M,\tau)$ such that $\tau={\tau_j}_{|M}$, for every $j\in J$, then we have a natural embedding $*_{M,j\in J}M_j^{\mathcal U}\subset (*_{M,j\in J}M_j)^{\mathcal U}$. 
This implies the existence of a $*$
-homomorphism $$\pi=*_{M,j\in J}\pi_j:*_{M,j\in J}\Gamma(M,\mathcal H_j)''\rightarrow *_{M,j\in J}(\Gamma(M,\mathcal K^{\oplus\infty})'')^{\mathcal U}\subset(*_{M,j\in J}\Gamma(M,\mathcal K^{\oplus\infty})'')^{\mathcal U}$$ with $\pi_{|M}=\text{Id}_M$. 
By \cite[Proposition 2.18]{Sh97}, we get that  $*_{M,j\in J}\Gamma(M,\mathcal H_j)''=\Gamma(M,\mathcal H)''$ and $*_{M,j\in J}\Gamma(M,\mathcal K^{\oplus\infty})''=\Gamma(M,\bigoplus_{j\in J}\mathcal K^{\oplus\infty})''=\Gamma(M,\mathcal K\otimes\ell^2(S))''$, where $S=\mathbb N\times J$. This proves the main assertion.

Finally, assume that $M$ and $\mathcal H$ are separable.  Since $M$ is separable, we can take $I=\mathbb N$. Since $\mathcal H$ is separable, we can take $J=\mathbb N$. Altogether, this implies that we can take $I=S=\mathbb N$, proving the moreover assertion.
\hfill$\square$

 \subsection{Proof of Corollary \ref{hyper}}
Let $\mathcal H$ be an $R$-bimodule. By Remark \ref{symmetrize}, after replacing $\mathcal H$ with $\mathcal H\oplus\overline{\mathcal H}$, we may assume that $\mathcal H$ is symmetric. Then $\mathcal H\subset\text{L}^2(\Gamma(R,\mathcal H)'')$, as $R$-bimodules.
Let $\mathcal K=\text{L}^2(R)\otimes\text{L}^2(R)$. Then by \cite[Example 3.3(a)]{Sh97}, $\Gamma(R,\mathcal K)''=R*\text{L}(\mathbb Z)$. Further, applying \cite[Proposition 2.18]{Sh97} implies that $\Gamma(R,\mathcal K\otimes\ell^2(S))''=R*\text{L}(\mathbb F_S)$, for every set $S$.
Since $R$ is hyperfinite, we have that $\mathcal H\subset_{\text{weak}}\mathcal K$. Theorem \ref{bimod} thus provides a trace preserving $*$-homomorphism $\pi:\Gamma(R,\mathcal H)''\rightarrow (R*\text{L}(\mathbb F_S))^{\mathcal U}$ such that $\pi_{|R}=\text{Id}_R$, for a set $S$ and a cofinal ultrafilter $\mathcal U$
 on a set $I$. Thus, $\text{L}^2(\Gamma(R,\mathcal H)'')\subset\text{L}^2((R*\text{L}(\mathbb F_S))^{\mathcal U})$ and therefore
 \begin{equation}\label{eq1}
     \text{$\mathcal H\subset\text{L}^2((R*\text{L}(\mathbb F_S))^{\mathcal U})$, as $R$-bimodules.}
 \end{equation}
 To finish the proof, we will use the following fact (see \cite{CP09}):
 
 \begin{fact} \label{fact}
 Let $\mathcal V_j$ be an ultrafilter on a set $I_j$, for all $j\in\{1,2\}$. Let $\mathcal V_1\otimes\mathcal V_2$ be the ultrafilter on $I_1\times I_2$ defined as follows:  $\text{$\lim_{(i_1,i_2)\rightarrow\mathcal V_1\otimes\mathcal V_2}f(i_1,i_2)=\lim_{i_1\rightarrow\mathcal V_1}(\lim_{i_2\rightarrow\mathcal V_2}f(i_1,i_2))$, for every $f\in\ell^\infty(I_1\times I_2)$}.$ Then for every tracial von Neumann algebra $(N,\tau)$, we have a trace preserving $*$-isomorphism $N^{\mathcal V_1\otimes\mathcal V_2}\cong (N^{\mathcal V_2})^{\mathcal V_1}$ given by $x\mapsto ((x_{i_1,i_2})_{i_2\in I_2})_{i_1\in I_1}$, for $x=(x_{i_1,i_2})_{(i_1,i_2)\in I_1\times I_2}\in\ell^{\infty}(I_1\times I_2,N)$.

 \end{fact}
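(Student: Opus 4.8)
The plan is to verify by hand that the map of the statement --- call it $\Theta$ --- is a well-defined, trace preserving $*$-isomorphism. The entire content is to unwind the two constructions of the ultraproduct against the definition of $\mathcal V_1\otimes\mathcal V_2$; the only point that calls for any care is one standard fact about norm-bounded representatives in tracial ultraproducts.

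First I would set up $\Theta$ at the level of representing sequences. Given a bounded family $x=(x_{i_1,i_2})\in\ell^\infty(I_1\times I_2,N)$, for each fixed $i_1$ the family $(x_{i_1,i_2})_{i_2}$ lies in $\ell^\infty(I_2,N)$ and so represents an element $y_{i_1}\in N^{\mathcal V_2}$ with $\|y_{i_1}\|\le\|x\|$; hence $(y_{i_1})_{i_1}\in\ell^\infty(I_1,N^{\mathcal V_2})$ represents an element $\Theta(x)\in(N^{\mathcal V_2})^{\mathcal V_1}$, namely $\Theta(x)=((x_{i_1,i_2})_{i_2})_{i_1}$. The crux is the pair of elementary identities
\[
\|y_{i_1}\|_2=\lim_{i_2\to\mathcal V_2}\|x_{i_1,i_2}\|_2,\qquad \tau_{\mathcal V_2}(y_{i_1})=\lim_{i_2\to\mathcal V_2}\tau(x_{i_1,i_2}),
\]
which come directly from the definition of the trace on $N^{\mathcal V_2}$, followed by their $\mathcal V_1$-limits, which by the very definition of $\mathcal V_1\otimes\mathcal V_2$ read
\[
\|\Theta(x)\|_2=\lim_{i_1\to\mathcal V_1}\lim_{i_2\to\mathcal V_2}\|x_{i_1,i_2}\|_2=\lim_{(i_1,i_2)\to\mathcal V_1\otimes\mathcal V_2}\|x_{i_1,i_2}\|_2,\qquad \tau_{\mathcal V_1}(\Theta(x))=\lim_{(i_1,i_2)\to\mathcal V_1\otimes\mathcal V_2}\tau(x_{i_1,i_2}).
\]
The first identity shows that $x$ is a $\|\cdot\|_2$-null family along $\mathcal V_1\otimes\mathcal V_2$ precisely when $\Theta(x)=0$, so $\Theta$ descends to a well-defined, $\|\cdot\|_2$-isometric map $N^{\mathcal V_1\otimes\mathcal V_2}\to(N^{\mathcal V_2})^{\mathcal V_1}$; the second shows this map is trace preserving. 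It is a $*$-homomorphism because sum, product and adjoint are computed coordinatewise in both ultraproducts.

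It remains to see that $\Theta$ is bijective. Injectivity is automatic: a $\|\cdot\|_2$-isometry has trivial kernel and the trace on $N^{\mathcal V_1\otimes\mathcal V_2}$ is faithful. For surjectivity, take $z\in(N^{\mathcal V_2})^{\mathcal V_1}$, represented by $(z_{i_1})_{i_1}\in\ell^\infty(I_1,N^{\mathcal V_2})$ with $\|z_{i_1}\|\le C$ for every $i_1$; using the standard fact that an element of a tracial ultraproduct of operator norm at most $C$ admits a representing sequence all of whose entries have operator norm at most $C$, pick for each $i_1$ such a representative $(z_{i_1,i_2})_{i_2}$ of $z_{i_1}$, so that $x:=(z_{i_1,i_2})_{(i_1,i_2)}\in\ell^\infty(I_1\times I_2,N)$ and $\Theta(x)=z$. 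Since a bijective $*$-homomorphism between von Neumann algebras is automatically a normal $*$-isomorphism, and trace preservation has already been checked, this finishes the proof. The hard part, such as it is, is exactly the quoted fact about norm-bounded representatives (proved by a routine truncation via functional calculus); everything else is bookkeeping with ultrafilters.
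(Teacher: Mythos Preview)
Your proof is correct; the argument is the standard unwinding of the two ultraproduct constructions, and you have handled the one nontrivial point (choosing uniformly norm-bounded representatives for surjectivity) properly. Note, however, that the paper does not give its own proof of this fact: it is stated as a known result with a reference to \cite{CP09}, so there is no argument in the paper to compare your approach against.
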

 Let $J$ be the collection of finite subsets $T\subset S$ ordered by inclusion, and $\mathcal V$ a cofinal ultrafilter on $J$. Then the map $R*\text{L}(\mathbb F_S)\ni x\mapsto (\text{E}_{R*\text{L}(\mathbb F_T)}(x))_{T\in J}\in\prod_{\mathcal V}(R*\text{L}(\mathbb F_T))$ is a trace preserving $*$-homomorphism. For every finite subset $T\subset S$, view $\mathbb F_T$ as a subgroup of $\mathbb F_\infty$.  By combining the last two facts, we get a trace preserving $*$-homomorphism $\delta:R*\text{L}(\mathbb F_S)\rightarrow (R*\text{L}(\mathbb F_\infty))^{\mathcal V}$ such that $\delta_{|R}=\text{Id}_R.$ Since $\text{L}(\mathbb F_\infty)$ is Connes embeddable, we can find a trace preserving $*$-homomorphism $\rho:R*\text{L}(\mathbb F_\infty)\rightarrow R^{\mathcal W}$, with $\mathcal W$ a free ultrafilter on $\mathbb N$. Since any embedding of $R$ into $R^{\mathcal W}$ is unitarily conjugate to the diagonal embedding, we may assume that $\rho_{|R}=\text{Id}_R$. 
 Thus, $\rho^{\mathcal V}:(R*\text{L}(\mathbb F_\infty))^{\mathcal V}\rightarrow (R^{\mathcal W})^{\mathcal V}$ given by $\rho^{\mathcal V}((x_j)_{j\in J})=(\rho(x_j))_{j\in J}$ is a trace preserving $*$-homomorphism.  
 
 Define $\sigma:=(\rho^{\mathcal V}\circ\delta)^{\mathcal U}:(R*\text{L}(\mathbb F_S))^{\mathcal U}\rightarrow ((R^{\mathcal W})^{\mathcal V})^{\mathcal U}.$
 Using Fact \ref{fact}, we can view $\sigma$ as trace preserving $*$-homomorphism $\sigma:(R*\text{L}(\mathbb F_S))^{\mathcal U}\rightarrow R^{\mathcal U\otimes\mathcal V\otimes W}$. Moreover, it is immediate that $\sigma_{|R}=\text{Id}$. In particular, we derive that
 \begin{equation}\label{eq2}
   \text{$ \text{L}^2((R*\text{L}(\mathbb F_S))^{\mathcal U})\subset\text{L}^2(R^{\mathcal U\otimes\mathcal V\otimes W})$, as $R$-bimodules}.
 \end{equation}
 By combining \eqref{eq1} and \eqref{eq2}, we conclude that $\mathcal H\subset\text{L}^2(R^{\mathcal U\otimes\mathcal V\otimes\mathcal W})$, as $R$-bimodules.
 This proves the main assertion.

 Finally, assume that $\mathcal H$ is separable. The moreover part of Theorem \ref{bimod} implies that we can take $S=I=\mathbb N$. Then $J$ is countable. Thus, $\mathcal U,\mathcal V$, $\mathcal W$, and consequently $\mathcal U\otimes\mathcal V\otimes\mathcal W$ are ultrafilters on countable sets. This implies the moreover assertion.
   \hfill$\square$

   We next use Theorem \ref{bimod} and its proof to derive Corollary \ref{relamen}.
   \subsection{Proof of Corollary \ref{relamen}}

By Remark \ref{symmetrize}, in order to prove (1) and (2), we may assume that $\mathcal H$ is a symmetric $M$-bimodule.

   (1) Assume that $\mathcal H$ is a symmetric $M$-bimodule with $\mathcal H\subset_{\text{weak}}\text{L}^2(M)$.
   By combining Proposition 2.18 and Example 3.3(b) in \cite{Sh97}, we get that $\Gamma(M,\text{L}^2(M)\otimes\ell^2(\mathbb N))''=\widehat{M}$, where $\widehat{M}=M\overline{\otimes}\text{L}(\mathbb F_\infty)$.
   By Theorem \ref{bimod}, there exists a trace preserving $*$-homomorphism $\pi:\Gamma(M,\mathcal H)''\rightarrow\widehat{M}^{\mathcal U}$, where $\mathcal U$ is an ultrafilter on $\mathbb N$, such that $\pi_{|M}=\text{Id}_M$. This implies that 
   $\mathcal H\subset \text{L}^2(\Gamma(M,\mathcal H)'')\subset\text{L}^2(\widehat{M}^\mathcal U)$, as $M$-bimodules.

   Conversely, if $\mathcal H\subset\text{L}^2(\widehat{M}^\mathcal U)$, then $\mathcal H\subset_{\text{weak}}\text{L}^2(\widehat{M})=\text{L}^2(M)\otimes\ell^2(\mathbb N)$ and therefore $\mathcal H\subset_{\text{weak}}\text{L}^2(M)$.

(2) Assume that $\mathcal H$ is a symmetric $M$-bimodule with $\mathcal H\subset_{\text{weak}}\text{L}^2(M)\otimes\text{L}^2(M)$.
   By combining Proposition 2.18 and Example 3.3(a) in \cite{Sh97}, we get that $\Gamma(M,(\text{L}^2(M)\otimes\text{L}^2(M))\otimes\ell^2(\mathbb N))''=\overline{M}$, where $\overline{M}=M*\text{L}(\mathbb F_\infty)$.
   By Theorem \ref{bimod}, there exists a $*$-homomorphism $\pi:\Gamma(M,\mathcal H)''\rightarrow\overline{M}^{\mathcal U}$ such that $\pi_{|M}=\text{Id}_M$. This implies that 
   $\mathcal H\subset \text{L}^2(\Gamma(M,\mathcal H)'')\subset\text{L}^2(\overline{M}^\mathcal U)$, as $M$-bimodules. Moreover, a close inspection of the proof of Theorem \ref{bimod} (see the moreover assertion of Lemma \ref{ultraproduct}) shows that  $\mathcal H\subset\text{L}^2(\overline{M}^\mathcal U)\ominus\text{L}^2(M^\mathcal U)$, as desired.

   Conversely, if $\mathcal H\subset\text{L}^2(\overline{M}^\mathcal U)\ominus\text{L}^2(M^{\mathcal U})$, then $\mathcal H\subset_{\text{weak}}\text{L}^2(\overline{M})\ominus\text{L}^2(M)$. On the other hand, we have $\text{L}^2(\overline{M})\ominus\text{L}^2(M)\cong(\text{L}^2(M)\otimes\text{L}^2(M))\otimes\ell^2(\mathbb N)$, and thus we conclude that $\mathcal H\subset_{\text{weak}}\text{L}^2(M)\otimes\text{L}^2(M)$.

To prove (3) and (4), let $\mathcal K=\text{L}^2(M)\otimes_Q\text{L}^2(M)$ and recall that we defined  $\widetilde M=M*_Q(Q\overline{\otimes}\text{L}(\mathbb Z))$. Then $\widetilde M \cong \Gamma(M,\mathcal K)''$ by \cite[Example 3.3(c)]{Sh97}.

 (3)
If $P$ is amenable relative to $Q$ inside $M$, by \cite[Proposition 2.4]{PV11} we find a net of subtracial vectors $(\xi_n)\subset \mathcal K_{+}$ such that we have $\lim_n\|\langle\xi_n,\xi_n\rangle_M-\tau(\cdot)\|_1=0$ and $\lim_n\|y\xi_n-\xi_ny\|=0$, for every $y\in P$. Moreover, since $M$ is separable, we can take $(\xi_n)$ to be a sequence.

Define $x_n=s(\xi_n)\in \widetilde M$. Since $\|s(\xi_n)\|\leq 2 \|\langle\xi_n,\xi_n\rangle_M\|\leq 2$ and $\text{E}_M(s(\xi_n))=0$, for every $n$, we get that $x=(x_n)$ belongs to ${\widetilde M}^{\mathcal U}$ and $\text{E}_{M^{\mathcal U}}(x)=0$.  If $\Omega=\widehat{1}\in\mathcal F_M(\mathcal K)$ is the vacuum vector, then $(yx_n-x_ny)\Omega=y\xi_n-\xi_n y$ and thus $\lim_n\|yx_n-x_ny\|_2=0$, for every $y\in P$. This implies that $x\in P'\cap \widetilde M^{\mathcal U}$. Since $\langle\xi_n,\xi_n\rangle_M\leq 1$, we get that $\|\langle \xi_n,\xi_n\rangle_M-1\|_2\leq 2\|\langle \xi_n,\xi_n\rangle_M-1\|_1^{1/2}$ and thus $\|\langle\xi_n,\xi_n\rangle_M-1\|_2\rightarrow 0$. Since $\text{E}_M(x_n^*x_n)=\langle \xi_n,\xi_n\rangle_M$, we get that $\text{E}_{M^\mathcal U}(x^*x)=(\text{E}_M(x_n^*x_n))=1$. This proves the only if assertion.

Conversely, assume that there exists $x\in P'\cap\widetilde M^{\mathcal U}$ such that $\text{E}_{M^\mathcal U}(x)=0$ and $\text{E}_{M^{\mathcal U}}(x^*x)=1$. Then we can find a net $(x_n)\subset \widetilde M\ominus M$ such that $\sup\|x_n\|<\infty$, $\lim_n\|\text{E}_M(x_n^*x_n)-1\|_2=0$ and $\lim_n\|yx_n-x_ny\|_2=0$, for every $y\in P$.
This gives that $\tau(x_n^* yx_nz)=\tau(yz)$, for every $y\in P$ and $z\in M$.
Thus, the $P$-$M$-bimodule $\text{L}^2(M)$ is weakly contained in $\text{L}^2(\widetilde M)\ominus\text{L}^2(M)$.
On the other hand,  \cite[Lemma 2.10 (1)]{Io12} gives that $\text{L}^2(\widetilde M)\ominus\text{L}^2(M)=\mathcal L\otimes_Q\text{L}^2(M)$, as $M$-bimodules, for an $M$-$Q$ bimodule $\mathcal L$. By \cite[Proposition 2.4]{PV11},  $P$ is amenable relative to $Q$ inside $M$.

(4) Assume that there exists a  $*$-homomorphism $\pi:\widetilde M\rightarrow {\widehat{M}}^{\mathcal U}$ whose restriction to $M$ is the diagonal embedding of $M$. Then $\text{L}^2(\widetilde M)\subset\text{L}^2(\widehat{M}^{\mathcal U})$, as $M$-bimodules. If $u\in\text{L}(\mathbb Z)\subset\widetilde M$ is a Haar unitary, then the $M$-bimodule $\overline{\text{span}(MuM)}$ is isomorphic to $\text{L}^2(M)\otimes_Q\text{L}^2(M)$. Thus, $\text{L}^2(M)\otimes_Q\text{L}^2(M)\subset\text{L}^2(\widehat{M}^\mathcal U)$, which implies that $\text{L}^2(M)\otimes_Q\text{L}^2(M)\subset_{\text{weak}}\text{L}^2(\widehat{M})=\text{L}^2(M)\otimes\ell^2(\mathbb N)$. Hence $\text{L}^2(M)\otimes_Q\text{L}^2(M)\subset_{\text{weak}}\text{L}^2(M)$, as claimed.

Conversely, assume that $\mathcal K\subset_{\text{weak}}\text{L}^2(M)$. Since $\Gamma(M,\mathcal K)''=\widetilde M$, $\Gamma(M,\text{L}^2(M)\otimes\ell^2(\mathbb N))''=\widehat{M}$ and $\mathcal K=\mathcal H_{\text{E}_Q}$, by combining Lemma \ref{trivialbimod} and Lemma \ref{ultraproduct} we deduce the existence of a trace preserving $*$-homomorphism $\pi:\widetilde{M}\rightarrow\widehat{M}^\mathcal U$ such that $\pi_{|M}=\text{Id}_M$.
\hfill$\square$

\subsection{Proof of Theorem \ref{construction}}
Let $M_0$ be a separable II$_1$ factor. We will construct an existentially closed separable II$_1$ factor $M\supset M_0$. To this end,  let $X=\{x_k\mid k\in\mathbb N\}\subset (M_0)_1$ be a sequence which generates $M_0$ and $\mathcal U$ be a free ultrafilter on $\mathbb N$. The proof relies on the following claim:

\begin{claim}\label{inductive}
There exists a separable II$_1$ factor $M_1\supset M_0$ such that the following holds: for any separable tracial von Neumann algebra $N\supset M_0$, there is an embedding of $N$ into $M_1^{\mathcal U}$ whose restriction to $M_0$ is the diagonal embedding.
\end{claim}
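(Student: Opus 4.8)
The goal is to produce a single separable II$_1$ factor $M_1\supset M_0$ that simultaneously ``absorbs'' every separable tracial von Neumann algebra over $M_0$ into its ultrapower, diagonally over $M_0$. The natural strategy is a two-step construction: first handle a single extension $N\supset M_0$, then amalgamate over a countable dense family of such extensions and take an inductive limit.

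\emph{Step 1: a universal countable list of extensions.} Up to trace-preserving isomorphism fixing $M_0$, there are only set-many separable tracial von Neumann algebras $N\supset M_0$; moreover, since every such $N$ is generated by $M_0$ together with a countable sequence of contractions, and the isomorphism type over $M_0$ is determined by the mixed $*$-moments of these generators together with the $x_k$, one sees that the relevant data lies in a separable space. Concretely, I would fix an enumeration $(N_m)_{m\in\mathbb N}$ of a countable family of separable extensions $N_m\supset M_0$ such that every separable $N\supset M_0$ trace-preservingly embeds (over $M_0$) into some $N_m$; for instance one can take the $N_m$ to realize a dense set of possible moment-sequences, or simply work with one ``sufficiently generic'' extension and enlarge later. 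The key point to justify here is that density of moments suffices: if $N$ embeds over $M_0$ into $N_m$ and $N_m$ embeds over $M_0$ into $M_1^{\mathcal U}$, then $N$ does too.

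\emph{Step 2: amalgamation and diagonal embedding into the ultrapower.} Set $P_0=M_0$ and inductively define $P_{m}=P_{m-1}*_{M_0}N_m$, the amalgamated free product over $M_0$; each $P_m$ is a separable II$_1$ factor containing $M_0$, and $P_{m-1}\subset P_m$. Let $M_1=\overline{\bigcup_m P_m}^{\mathrm{WOT}}$, a separable II$_1$ factor (after passing to the II$_1$ factor it generates, or noting it is already a factor since $M_0$ is). It remains to show that each $N_m$ embeds over $M_0$ into $M_1^{\mathcal U}$. For this I would use Lemma \ref{hom}: since $N_m\subset M_1$ and $M_1$ is an increasing union, one can approximate the generators of $N_m$ inside the $P_k$'s; but more directly, one exhibits for each $k$ an embedding $\iota_k\colon N_m\hookrightarrow M_1$ such that $\iota_k\to$ ``diagonal over $M_0$'' in the appropriate moment sense, then assembles these via Lemma \ref{hom} into $\pi\colon N_m\to M_1^{\mathcal U}$ with $\pi|_{M_0}$ diagonal. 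The maps $\iota_k$ come from the free-product structure: inside $P_m$ we already have a copy of $N_m$, and conjugating by suitable free Haar unitaries (coming from the later free factors $N_{m+1},N_{m+2},\dots$, which are free from $N_m$ over $M_0$) produces copies of $N_m$ that become asymptotically free from any fixed finite portion of $M_1$ over $M_0$; this asymptotic freeness is exactly what forces the mixed moments with elements of $M_1$ to converge to the values they would have for the diagonal embedding $N_m\subset M_1\subset M_1^{\mathcal U}$. Feeding these into Lemma \ref{hom} yields the desired $\pi$.

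\emph{Main obstacle.} The delicate point is Step 1 — making precise the sense in which a \emph{countable} list $(N_m)$ is ``universal'' among all separable extensions of $M_0$, so that the single factor $M_1$ works for all $N$ at once. One must argue that the embeddability of $N$ into $M_1^{\mathcal U}$ over $M_0$ depends only on countably much data (a sequence of $*$-moments of generators relative to $M_0$), and that this data can be captured, up to arbitrarily good approximation, by some $N_m$ from a fixed countable family; the approximation has to be compatible with the diagonal constraint on $M_0$. Once this separability/universality bookkeeping is in place, Step 2 is routine: amalgamated free products over $M_0$ keep things separable and factorial, asymptotic freeness over $M_0$ gives moment convergence, and Lemma \ref{hom} converts moment convergence into the required $*$-homomorphism into the ultrapower fixing $M_0$ pointwise.
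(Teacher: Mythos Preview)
Your overall architecture --- pick a countable family of extensions $(N_m)_m$ of $M_0$, form the amalgamated free product $M_1=\ast_{M_0,m}N_m$, and invoke Lemma~\ref{hom} --- is exactly the paper's. The paper makes Step~1 precise by putting the weak-$*$ topology of moments on the space $\mathcal F$ of pairs $(N,Y)$ (with $Y\supset X$ a generating sequence of contractions), observes that this space is separable, and takes $(N_n,Y_n)$ dense.

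Your Step~2, however, is misdirected. Once $M_1=\ast_{M_0,n}N_n$ is built, there is \emph{nothing} to prove about embedding each $N_m$ into $M_1^{\mathcal U}$ over $M_0$: the inclusion $N_m\subset M_1\subset M_1^{\mathcal U}$ already restricts to the diagonal on $M_0$. The asymptotic-freeness/conjugation argument you sketch is aimed at a nonexistent difficulty. What actually remains is to embed an \emph{arbitrary} $N\supset M_0$ (one not on the list), and this is where the moment-density from Step~1 is spent, not merely to factor $N$ through some $N_m$. Concretely: pick generators $y_k$ of $N$ extending $X$, choose a subsequence $(N_{n_l},Y_{n_l})$ converging to $(N,Y)$ in moments, and note that the generators $y_{n_l,k}$ already live in $M_1$; Lemma~\ref{hom} then directly produces $\pi:N\to M_1^{\mathcal U}$ with $\pi(x_k)=x_k$. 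No free Haar unitaries, no conjugation, no asymptotic freeness is needed.

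Relatedly, your Step~1 wavers between two inequivalent meanings of ``universal'': (i) every $N$ literally embeds over $M_0$ into some $N_m$, versus (ii) the $(N_m)$ are merely moment-dense. You need only (ii), and that is what the paper uses; (i) is much stronger and you give no construction of such a family. Once you commit to (ii), the transition ``density $\Rightarrow$ embedding into the ultrapower'' is the one-line application of Lemma~\ref{hom} above. Finally, to ensure $M_1$ is a II$_1$ factor, the paper simply replaces $M_1$ by $M_1\ast\mathrm L(\mathbb Z)$.
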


\begin{proof}[Proof of Claim \ref{inductive}]
We denote by $\mathcal F$ the set of pairs $(N,Y)$, where $N$ is a separable tracial von Neumann algebra which contains $M_0$ and $Y = X \cup \{y_k\mid k\in\mathbb N\}\subset (N)_1$ is a  sequence which generates $N$ and contains $X$.
For $m\in\mathbb N$, we denote by $\mathcal P_m$ the set of $*$-monomials in the variables $\{X_1,\cdots,X_m,Y_1,\cdots,Y_m\}$ of degree at most $m$.
For $(N,Y)\in\mathcal F$, $m \in \mathbb{N}$ and $\varepsilon >0$, we denote by $\mathcal{U}_{(N,Y)}(m,\varepsilon) $ the set of  
$(\bar{N},\bar{Y})\in\mathcal F$ such that writing $\bar{Y} = X \cup \{\bar{y}_k\mid k\in\mathbb N\}$ we have:
\[
\sum_{p\in\mathcal P_m}
|\tau_{\bar{N}}(p(x_1,...,x_m,\bar{y}_1,...,\bar{y}_m)) - \tau_{N}(p(x_1,...,x_m,y_1,...,y_m))| < \varepsilon.
\]
Consider the topology on $\mathcal F$ which has $\{\mathcal U_{(N,Y)}(m,\varepsilon)\}_{m\in\mathbb N,\varepsilon>0}$ as a neighborhood basis of any $(N,Y)\in\mathcal F$. Note that the elements of $\mathcal F$ can be viewed as representatives of non-commutative laws on infinitely many variables;  the topology we defined on $\mathcal F$ is precisely the weak$^*$-topology on the space of laws.
Since $\mathcal F$ is separable in this topology, it admits a dense sequence
$\{(N_n,Y_n)\}_{n\in\mathbb{N}}$. Define $M_1 = *_{M_0, n\in\mathbb N}N_n$. Write $Y_n=X\cup\{y_{n,k}\mid k\in\mathbb N\}$, for every $n\in\mathbb N$.

Let  $N$ be a separable tracial von Neumann algebra which contains $M_0$. Choose $Y\subset (N)_1$ such that
$(N,Y) \in \mathcal{F}$. Then $(N,Y)$ is the limit of a subsequence $(N_{n_l},Y_{n_l})$ of $(N_n,Y_n)$. Thus, we have $$\tau_{N_{n_l}}(p(x_1,...,x_m, y_{n_l,1},...,y_{n_l,m}))\rightarrow\tau_N(p(x_1,...,x_m,y_1,...,y_m)),\;\;\text{as $l\rightarrow\infty$},$$ for every $m\in\mathbb N$ and $*$-monomial $p$ in $X_1,...,X_m,Y_1,...,Y_m$.  

Since $\cup_{n\in\mathbb N}Y_n\subset M_1$,  Lemma \ref{hom} gives a trace preserving $*$-homomorphism $\pi:N\rightarrow M_1^{\mathcal U}$ such that $\pi(p(x_1,...,x_m,y_1,...,y_m))=(p(x_1,...,x_m, y_{n_l,1},...,y_{n_l,m}))$, for every $m\in\mathbb N$ and $*$-monomial $p$ in $X_1,...,X_m,Y_1,...,Y_m$. In particular, $\pi(x_k)=x_k$, for every $k\in\mathbb N$, and thus the restriction of $\pi$ to $M_0$ is the diagonal embedding of $M_0$ into $M_1^{\mathcal U}$. Moreover, since this property still holds if we replace $M_1$ by $M_1*\text{L}(\mathbb Z)$, we can assume that $M_1$ is a II$_1$ factor. Here we are using the fact that $P*\text{L}(\mathbb Z)$ is a II$_1$ factor for any non-trivial tracial von Neumann algebra $P$.
\end{proof}

By Claim \ref{inductive}, we can inductively construct an increasing sequence of II$_1$ factors $M_n\supset M_0$, $n\geq 1$,  such that for every $n\geq 0$ and any separable tracial von Neumann algebra $N\supset M_n$, there is an embedding of $N$ into $M_{n+1}^{\mathcal U}$ whose restriction to $M_n$ is the diagonal embedding.

Let $M = ({{\bigcup_{n\in\mathbb N} M_n})}^{''}$. Then $M\supset M_0$ is a separable II$_1$ factor.
We claim that $M$ is existentially closed. 
Let $N$ be a II$_1$ factor containing $M$. Let $\{z_k\mid k\in\mathbb N\}\subset (M)_1$ and $\{t_m\mid m\in\mathbb N\}\subset (N)_1$ be sequences which generate $M$ and $N$, respectively. We may assume that $z_k\in M_k$, for every $k\in\mathbb N$.  


Let $n\in\mathbb N$ and denote by $P$ the von Neumann subalgebra of $N$ generated by $M$ and $\{t_1,\cdots,t_n\}$.
Since $M_n\subset M\subset P$ and $P$ is separable there is a trace preserving $*$-homomorphism $\pi:P\rightarrow M_{n+1}^{\mathcal U}$ such that $\pi(x)=x$, for every $x\in M_n$.
In particular,  $\pi(z_k)=z_k$, for every $1\leq k\leq n$. Write $\pi(t_m)=(t_{l,m})$, where $t_{l,m}\in (M_{n+1})_1$, for every $l\in\mathbb N$ and $1\leq m\leq n$.
Since $\pi$ is trace preserving, for every $*$-polynomial $p$ in variables $Z_k,T_m, 1\leq k,m\leq n$, we have $$\lim_{l\rightarrow\mathcal U}\tau_{M}(p(z_k,t_{l,m}, 1\leq k,m\leq n))=\tau_N(p(z_k,t_m,1\leq k,m\leq n)).$$
This implies that we can find $l\in\mathbb N$ such that denoting $u_{n,m}:=t_{l,m}$, for every $1\leq m\leq n$, we have $$|\tau_{M}(p(z_k,u_{n,m}, 1\leq k,m\leq n))-\tau_N(p(z_k,t_m,1\leq k,m\leq n))|<\frac{1}{n},$$ for every $*$-monomial $p$ of degree $\leq n$ in variables $Z_k,X_m, 1\leq k,m\leq n$.

Then $\lim_n\tau_M(p(z_k,u_{n,m}, k,m\in\mathbb N))=\tau_N(p(z_k,t_m,k,m\in\mathbb N)),$ for every $*$-monomial $p$ in variables $Z_k,X_m, k,m\in\mathbb N$.
Since the set $\{z_k\mid k\in\mathbb N\}\cup\{t_m\mid m\in \mathbb N\}$ generates $N$, Lemma \ref{hom} gives a trace preserving $*$-homomorphism $\rho:N\rightarrow M^{\mathcal V}$ such that $\rho(z_k)=z_k$ and $\rho(t_m)=(u_{n,m})$, for every $k,m\in\mathbb N$. Since the set $\{z_k\mid k\in\mathbb N\}$ generates $M$, the restriction of $\rho$ to $M$ is the diagonal embedding into $M^{\mathcal V}$. Thus, $M$ is existentially closed in $N$, which finishes the proof. \hfill$\square$

\subsection{Proof of Theorem \ref{non-Gamma}}
Our next goal is to prove Theorem \ref{non-Gamma}. We first establish the following lemma.

\begin{lemma}\label{AFP}
Let $(M_1,\tau_1),(M_2,\tau_2)$ be tracial von Neumann algebras with a common von Neumann subalgebra $(B,\tau)$. Assume that $\tau={\tau_i}|B$ and $\emph{L}^2(M_i)\ominus\emph{L}^2(B)\subset_{\emph{weak}}\emph{L}^2(B)\otimes\emph{L}^2(B)$, as $B$-bimodules, for every $i\in\{1,2\}$. Then  $\emph{L}^2(M_1*_BM_2)\ominus\emph{L}^2(B)\subset_{\emph{weak}}\emph{L}^2(B)\otimes\emph{L}^2(B)$, as $B$-bimodules.
\end{lemma}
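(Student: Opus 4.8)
The plan is to decompose the $B$-bimodule $\text{L}^2(M_1*_BM_2)\ominus\text{L}^2(B)$ into the standard ``alternating word'' pieces and show that each piece is weakly contained in the coarse $B$-bimodule $\text{L}^2(B)\otimes\text{L}^2(B)$; a direct-sum argument then gives the conclusion.

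Write $M=M_1*_BM_2$ and $\mathcal H_i=\text{L}^2(M_i)\ominus\text{L}^2(B)$ for $i\in\{1,2\}$, equipped with the $B$-bimodule structure inherited from $M_i$. First I would recall the standard $B$-bimodule decomposition of the $\text{L}^2$-space of an amalgamated free product (see, e.g., \cite{AP18}),
\[
\text{L}^2(M)\ominus\text{L}^2(B)\;\cong\;\bigoplus_{n\geq 1}\ \bigoplus_{\substack{i_1,\dots,i_n\in\{1,2\}\\ i_1\neq i_2\neq\cdots\neq i_n}}\mathcal H_{i_1}\otimes_B\mathcal H_{i_2}\otimes_B\cdots\otimes_B\mathcal H_{i_n},
\]
where $\otimes_B$ is the Connes tensor product. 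The index set of alternating words over $\{1,2\}$ is countable, and for $B$-bimodules $\mathcal K\subset_{\text{weak}}\mathcal L$ is equivalent to $\|\pi_{\mathcal K}(T)\|\leq\|\pi_{\mathcal L}(T)\|$ for all $T\in B\otimes_{\text{alg}}B^{\text{op}}$; since $\|\pi_{\bigoplus_n\mathcal K_n}(T)\|=\sup_n\|\pi_{\mathcal K_n}(T)\|$, a countable direct sum of $B$-bimodules each weakly contained in $\text{L}^2(B)\otimes\text{L}^2(B)$ is again weakly contained in $\text{L}^2(B)\otimes\text{L}^2(B)$. Hence it suffices to show that each word bimodule $\mathcal H_{i_1}\otimes_B\cdots\otimes_B\mathcal H_{i_n}$ with $n\geq 1$ is weakly contained in $\text{L}^2(B)\otimes\text{L}^2(B)$.

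For this I would invoke two facts about Connes tensor products. The first is an absorption property: for every $B$-bimodule $\mathcal L$ one has $(\text{L}^2(B)\otimes\text{L}^2(B))\otimes_B\mathcal L\subset_{\text{weak}}\text{L}^2(B)\otimes\text{L}^2(B)$. This is checked directly: the map $(b_1\otimes b_2)\otimes_B\zeta\mapsto b_1\otimes b_2\zeta$ extends to a $B$-bimodular unitary $(\text{L}^2(B)\otimes\text{L}^2(B))\otimes_B\mathcal L\cong\text{L}^2(B)\otimes\mathcal L$, with left $B$-action on the first leg and right $B$-action on $\mathcal L$; realizing $\mathcal L$, as a Hilbert right $B$-module, as a submodule of $\text{L}^2(B)\otimes\ell^2(I)$ for a suitable set $I$ embeds $\text{L}^2(B)\otimes\mathcal L$ $B$-bimodularly into $(\text{L}^2(B)\otimes\text{L}^2(B))^{\oplus|I|}$, which gives the weak containment. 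The second is the (standard) stability of weak containment of $B$-bimodules under Connes tensor products (see, e.g., \cite{AP18}): if $\mathcal K\subset_{\text{weak}}\mathcal K'$ then $\mathcal K\otimes_B\mathcal L\subset_{\text{weak}}\mathcal K'\otimes_B\mathcal L$. Combining the two, whenever $\mathcal K\subset_{\text{weak}}\text{L}^2(B)\otimes\text{L}^2(B)$ we get $\mathcal K\otimes_B\mathcal L\subset_{\text{weak}}(\text{L}^2(B)\otimes\text{L}^2(B))\otimes_B\mathcal L\subset_{\text{weak}}\text{L}^2(B)\otimes\text{L}^2(B)$.

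To conclude, fix a word $i_1\neq\cdots\neq i_n$ with $n\geq 1$ and set $\mathcal L=\mathcal H_{i_2}\otimes_B\cdots\otimes_B\mathcal H_{i_n}$, with the convention $\mathcal L=\text{L}^2(B)$ when $n=1$, using $\mathcal H_{i_1}\otimes_B\text{L}^2(B)\cong\mathcal H_{i_1}$. By hypothesis $\mathcal H_{i_1}=\text{L}^2(M_{i_1})\ominus\text{L}^2(B)\subset_{\text{weak}}\text{L}^2(B)\otimes\text{L}^2(B)$, so $\mathcal H_{i_1}\otimes_B\mathcal L\subset_{\text{weak}}\text{L}^2(B)\otimes\text{L}^2(B)$ by the previous paragraph; since this holds for every word, the decomposition above finishes the proof. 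The only nontrivial ingredients are the two ``bimodule bookkeeping'' facts — chiefly the stability of weak containment under Connes tensor products and the verification that the coarse bimodule absorbs Connes tensor products — and one must be a little careful that the hypotheses supply only \emph{weak} containment in the coarse bimodule rather than genuine containment, so the ``weak'' versions of these facts are exactly what is needed; once they are in place the remainder is just the combinatorics of reduced words.
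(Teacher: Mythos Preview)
Your argument is correct and follows essentially the same route as the paper: decompose $\text{L}^2(M_1*_BM_2)\ominus\text{L}^2(B)$ into alternating Connes tensor products of the $\mathcal H_i$, and use that the coarse $B$-bimodule absorbs Connes tensor products together with stability of weak containment under $\otimes_B$. The paper phrases the key step as ``if $\mathcal H_1,\mathcal H_2\subset_{\text{weak}}\text{L}^2(B)\otimes\text{L}^2(B)$ then $\mathcal H_1\otimes_B\mathcal H_2\subset_{\text{weak}}\text{L}^2(B)\otimes\text{L}^2(B)$'' and leaves its justification (and the ensuing induction on word length) implicit; your version unpacks this by proving the absorption $(\text{L}^2(B)\otimes\text{L}^2(B))\otimes_B\mathcal L\subset_{\text{weak}}\text{L}^2(B)\otimes\text{L}^2(B)$ for \emph{arbitrary} $\mathcal L$, which lets you handle each word in one shot (no induction, and only the hypothesis on $\mathcal H_{i_1}$ is actually used).
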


\begin{proof}
If two $B$-bimodules $\mathcal H_1$ and $\mathcal H_2$ are  weakly contained in $\text{L}^2(B)\otimes\text{L}^2(B)$, then so is $\mathcal H_1\otimes_B\mathcal H_2$.
Since the $B$-bimodule $\text{L}^2(M_1*_BM_2)\ominus\text{L}^2(B)$ decomposes as
$$\bigoplus_{n\geq 1}\bigoplus_{\substack{{i_1,i_2,\cdots,i_n\in\{1,2\}}\\{ i_1\not= i_2\not=\cdots\not=i_n}}}(\text{L}^2(M_{i_1})\ominus \text{L}^2(B))\otimes_B\cdots\otimes_B(\text{L}^2(M_{i_n})\ominus\text{L}^2(B)),$$ the conclusion follows.
\end{proof}

{\it Proof of Theorem \ref{non-Gamma}.}
Let $M_0$ be a separable II$_1$ factor. After replacing $M_0$ by $M_0*\text{L}(\mathbb Z)$, we may assume that $M_0$ is non-Gamma.
We will construct a non-Gamma separable II$_1$ factor $M$ containing $M_0$ such that $M$  is existentially closed in any II$_1$ factor $N$ which contains $M$ and satisfies that $\text{L}^2(N)\ominus\text{L}^2(M)\subset_{\text{weak}}\text{L}^2(M)\otimes\text{L}^2(M)$, as $M$-bimodules. Let $\mathcal U$ be a free ultrafilter on $\mathbb N$.

The rest of the proof follows closely the proof of Theorem \ref{construction}, whose notation we keep here. Let $\mathcal G$ be the subset of $\mathcal F$ consisting of $(N,Y)\in\mathcal F$ such that  $\text{L}^2(N)\ominus\text{L}^2(M_0)\subset_{\text{weak}}\text{L}^2(M_0)\otimes\text{L}^2(M_0)$ as $M_0$-bimodules.
Since $\mathcal G$ is separable in the topology defined in the proof of Theorem \ref{construction}, it admits a dense sequence
$\{(N_n,Y_n)\}_{n\in\mathbb{N}}$. Define $M_1 = (*_{M_0,n\in\mathbb N}N_n)*\text{L}(\mathbb Z)$.  Then $M_1$ is a II$_1$ factor. Using Lemma \ref{AFP} and  induction implies that
  $\text{L}^2(M_1)\ominus\text{L}^2(M_0)\subset_{\text{weak}}\text{L}^2(M_0)\otimes\text{L}^2(M_0)$, as  $M_0$-bimodules.

The proof of Claim \ref{inductive} shows that for every separable
tracial von Neumann algebra $N\supset M_0$ such that $\text{L}^2(N)\ominus\text{L}^2(M_0)\subset_{\text{weak}}\text{L}^2(M_0)\otimes\text{L}^2(M_0)$ as $M_0$-bimodules, there is a trace preserving  $*$-homomorphism $\pi:N\rightarrow M_1^{\mathcal U}$ whose restriction to $M_0$ is the diagonal embedding of $M_0$ into $M_1^{\mathcal U}$.

We then inductively construct an increasing sequence of II$_1$ factors $M_n$, $n\geq 1$, containing $M_0$ such that for every $n\geq 0$ the following properties hold:
\begin{enumerate}
\item\label{1} For any separable tracial von Neumann algebra $N$ which contains $M_n$ and satisfies that $\text{L}^2(N)\ominus\text{L}^2(M_n)\subset_{\text{weak}}\text{L}^2(M_n)\otimes\text{L}^2(M_n)$, as $M_n$-bimodules, there is  a trace preserving $*$-homomorphism $\pi:N\rightarrow M_{n+1}^{\mathcal U}$ whose restriction to $M_n$ is the diagonal embedding of $M_n$. 
\item\label{2}  $\text{L}^2(M_{n+1})\ominus\text{L}^2(M_n)\subset_{\text{weak}}\text{L}^2(M_n)\otimes\text{L}^2(M_n)$, as  $M_n$ bimodules.

\end{enumerate}
Then $M = ({{\bigcup_{n\in\mathbb N} M_n})}^{''}$ is a separable II$_1$ factor which contains $M_0$.
We will prove that $M$ has the desired properties.

First, let   $N\supset M$ be a II$_1$ factor with $\text{L}^2(N)\ominus\text{L}^2(M)\subset_{\text{weak}}\text{L}^2(M)\otimes\text{L}^2(M)$, as $M$-bimodules.
We claim that $M$ is existentially closed in $N$. Let $\{t_m\mid m\in\mathbb N\}\subset (N)_1$ be a sequence which  generates $N$.
Let $n\in\mathbb N$ and denote by $P\subset N$ the von Neumann subalgebra generated by $M$ and $\{t_1,\cdots,t_n\}$. 
By \eqref{2}, for every $k\geq n$ we have  $\text{L}^2(M_{k+1})\ominus\text{L}^2(M_k)\subset_{\text{weak}}\text{L}^2(M_n)\otimes\text{L}^2(M_n)$, as  $M_n$-bimodules.
Since we also have $\text{L}^2(M)\ominus\text{L}^2(M_n)=\bigoplus_{k\geq n}(\text{L}^2(M_{k+1})\ominus\text{L}^2(M_k))$, we get $\text{L}^2(M)\ominus\text{L}^2(M_n)\subset_{\text{weak}}\text{L}^2(M_n)\otimes\text{L}^2(M_n)$, as $M_n$-bimodules. From this we further derive that $\text{L}^2(N)\ominus\text{L}^2(M_n)\subset_{\text{weak}}\text{L}^2(M_n)\otimes\text{L}^2(M_n)$, and hence $\text{L}^2(P)\ominus\text{L}^2(M_n)\subset_{\text{weak}}\text{L}^2(M_n)\otimes\text{L}^2(M_n)$, as $M_n$-bimodules. We can now apply \eqref{1} to get  a trace preserving $*$-homomorphism $\pi:P\rightarrow M_{n+1}^{\mathcal U}$ whose restriction to $M_n$ is the diagonal embedding of $M_n$ into $M_{n+1}^{\mathcal U}$.
Proceeding as in the proof of Theorem \ref{construction} gives that $M$ is existentially closed in $N$.

Second, assume by contradiction that $M$ has property Gamma.
Then $M'\cap M^{\mathcal U}$ is diffuse, hence there is a unitary $u\in M'\cap M^{\mathcal U}$ of trace zero. Since $M_0$ is non-Gamma, $M_0'\cap M_0^{\mathcal U}=\mathbb C1$ and thus we get that $u\in M^{\mathcal U}\ominus M_0^{\mathcal U}$. Since $u$ commutes with $M_0$, we deduce that $\text{L}^2(M_0)\subset_{\text{weak}}\text{L}^2(M)\ominus\text{L}^2(M_0)$, as $M_0$-bimodules.
On the other hand, \eqref{2} implies that $\text{L}^2(M)\ominus\text{L}^2(M_0)\subset_{\text{weak}}\text{L}^2(M_0)\otimes\text{L}^2(M_0)$, as $M_0$-bimodules. Combining these facts gives that $\text{L}^2(M_0)\subset_{\text{weak}}\text{L}^2(M_0)\otimes\text{L}^2(M_0)$, as $M_0$-bimodules. In other words, $M_0$ is amenable, which contradicts the fact that $M_0$ is non-Gamma. \hfill$\square$

We continue by proving the following result mentioned 
in Remark \ref{optimal}. 
\begin{lemma}\label{freegroup}
Let $M=\emph{L}(\mathbb F_n)$, for some $n\geq 1$, and $\mathcal U$ be a free ultrafilter on $\mathbb N$. Then we have that $\emph{L}^2(M^{\mathcal U})\ominus\emph{L}^2(M)\subset_{\emph{weak}}\emph{L}^2(M)\otimes\emph{L}^2(M)$, as $M$-bimodules.
\end{lemma}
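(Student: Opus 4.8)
The plan is to deduce the statement from the Akemann--Ostrand property of $\mathbb{F}_n$ together with the elementary fact that compact operators annihilate weakly null sequences along an ultrafilter. (For $n=1$ there is nothing to do: $\text{L}(\mathbb{Z})$ is amenable, so all of its bimodules are weakly contained in the coarse one.) Write $M=\text{L}(\mathbb{F}_n)$, identify $\text{L}^2(M)=\ell^2(\mathbb{F}_n)$, let $\lambda$ be the left regular representation and, for $b\in M$, let $r(b)$ denote right multiplication by $b$ on $\ell^2(\mathbb{F}_n)$; set $\mathcal{H}=\text{L}^2(M^{\mathcal U})\ominus\text{L}^2(M)$ and $\mathcal{C}=\text{L}^2(M)\otimes\text{L}^2(M)$. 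Since $\mathcal{H}$ is a direct sum of cyclic $M$-bimodules, by the norm characterization of weak containment it suffices to prove $\|\pi_{\mathcal H}(\sum_i a_i\otimes b_i^{\text{op}})\|\leq\|\pi_{\mathcal C}(\sum_i a_i\otimes b_i^{\text{op}})\|$ for all $a_i,b_i\in M$. Moreover, using Kaplansky density to approximate finite subsets of $M$ in $\|\cdot\|_2$ by finite subsets of $\mathbb{C}[\mathbb{F}_n]$ of controlled operator norm, together with the density of bounded vectors in $\mathcal{H}$, a routine Fell-topology argument reduces us to establishing this inequality for $a_i,b_i\in\mathbb{C}[\mathbb{F}_n]$.

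So fix such $a_i,b_i$ and a vector $\xi\in\mathcal{H}$, represented by a bounded sequence $(\xi_k)\subset\ell^2(\mathbb{F}_n)$ with $\langle\xi_k,\eta\rangle\to0$ along $\mathcal U$ for every $\eta\in\text{L}^2(M)$, i.e.\ weakly null along $\mathcal U$. Then $\pi_{\mathcal H}(\sum_i a_i\otimes b_i^{\text{op}})\xi$ is represented by $(\widetilde{T}\xi_k)$, where $\widetilde{T}=\sum_i\lambda(a_i)r(b_i)\in\mathbb{B}(\ell^2\mathbb{F}_n)$, and $\pi_{\mathcal C}(\sum_i a_i\otimes b_i^{\text{op}})=\sum_i\lambda(a_i)\otimes r(b_i)$. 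The key input is the \emph{Akemann--Ostrand property} of $\mathbb{F}_n$ (equivalently, biexactness of $\mathbb{F}_n$, or property AO$^+$ of $\text{L}(\mathbb{F}_n)$): there is a bounded operator $V:\ell^2(\mathbb{F}_n)\to\mathcal{K}$, with $\mathcal{K}$ a countable multiple of $\mathcal{C}$, which is isometric modulo the compacts, i.e.\ $1-V^*V\in\mathbb{K}(\ell^2\mathbb{F}_n)$, and which intertwines the two bimodule structures modulo the compacts: $V\lambda(a)-(\lambda(a)\otimes1)^{\oplus\infty}V\in\mathbb{K}$ and $Vr(b)-(1\otimes r(b))^{\oplus\infty}V\in\mathbb{K}$ for every $a,b\in\mathbb{C}[\mathbb{F}_n]$ (so in particular $V\widetilde{T}-\pi_{\mathcal K}(\sum_i a_i\otimes b_i^{\text{op}})V\in\mathbb{K}$). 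This is the point where the tree structure of $\mathbb{F}_n$ enters: $V$ can be constructed explicitly from the geodesic ``last edge towards the boundary'' map on the Cayley tree, or, equivalently, from the facts that $\mathbb{F}_n\curvearrowright\partial\mathbb{F}_n$ is amenable and small at infinity. Carrying out this construction and verifying the membership-in-$\mathbb{K}$ statements by the combinatorics of reduced words is the main technical step; it is also precisely the ingredient that makes the argument ``direct'' for free group factors, rather than appealing to the abstract biexactness machinery.

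Granting $V$, one finishes quickly. A spectral-decomposition argument shows that for every compact operator $K$ and every bounded sequence $(v_k)$ that is weakly null along $\mathcal U$ one has $\lim_{k\to\mathcal U}\|Kv_k\|=0$. Applying this to $(\xi_k)$ and to the (still weakly null) sequence $(\widetilde{T}\xi_k)$, and using $1-V^*V\in\mathbb{K}$, we get $\lim_{\mathcal U}\|V\xi_k\|^2=\lim_{\mathcal U}\|\xi_k\|^2=\|\xi\|^2$ and
$$\lim_{\mathcal U}\|\widetilde{T}\xi_k\|^2=\lim_{\mathcal U}\|V\widetilde{T}\xi_k\|^2=\lim_{\mathcal U}\big\|\pi_{\mathcal K}\big(\textstyle\sum_i a_i\otimes b_i^{\text{op}}\big)V\xi_k\big\|^2\leq\big\|\pi_{\mathcal K}\big(\textstyle\sum_i a_i\otimes b_i^{\text{op}}\big)\big\|^2\,\|\xi\|^2 .$$
Since $\mathcal K$ is a multiple of the coarse bimodule, $\|\pi_{\mathcal K}(\cdot)\|=\|\pi_{\mathcal C}(\cdot)\|$, so $\|\pi_{\mathcal H}(\sum_i a_i\otimes b_i^{\text{op}})\xi\|\leq\|\pi_{\mathcal C}(\sum_i a_i\otimes b_i^{\text{op}})\|\,\|\xi\|$, which is the desired norm inequality and hence yields $\text{L}^2(M^{\mathcal U})\ominus\text{L}^2(M)\subset_{\text{weak}}\text{L}^2(M)\otimes\text{L}^2(M)$. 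The hard part is Step 2 — the explicit construction of the mod-compact intertwiner $V$ from the tree — everything else being formal.
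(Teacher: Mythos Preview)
Your argument is correct and constitutes a genuinely different proof from the one in the paper. The paper's proof uses Popa's free malleable deformation: one takes the automorphisms $(\alpha_t)_{t\in\mathbb R}$ of $\widetilde M=M*M$, exploits that $x\mapsto \text{E}_M(\alpha_t(x))$ is compact on $\text{L}^2(M)$ for $t\neq0$, and transports a weakly null sequence $(\eta_n)\subset M$ representing a vector in $\text{L}^2(M^{\mathcal U})\ominus\text{L}^2(M)$ to vectors $\zeta_n=\alpha_{-t}(\eta_n)-\text{E}_M(\alpha_{-t}(\eta_n))\in\text{L}^2(\widetilde M)\ominus\text{L}^2(M)\cong(\text{L}^2(M)\otimes\text{L}^2(M))^{\oplus\infty}$ whose bimodule coefficients match up to $\varepsilon$. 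Your route instead invokes the Akemann--Ostrand property directly, deducing the norm inequality $\|\pi_{\mathcal H}(T)\|\leq\|\pi_{\mathcal C}(T)\|$ from the fact that the product map $C^*_\lambda\otimes_{\min}C^*_\rho\to\mathbb B(\ell^2\mathbb F_n)/\mathbb K$ is bounded, together with the observation that compacts annihilate weakly null sequences along $\mathcal U$. Both proofs hinge on a ``compactness modulo the coarse'' phenomenon, but encode it differently: the deformation approach is concrete and self-contained for free groups (one writes $\alpha_t$ explicitly via rotations of generators), whereas your approach is more structural and immediately generalizes to all biexact groups---which is exactly the result the paper attributes to Marrakchi in Remark~\ref{optimal} and to Ding--Peterson in the remark following Lemma~\ref{freegroup}.

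Two minor comments. First, you do not actually need the intertwiner $V$: the Calkin-algebra form of AO already gives, for each $\varepsilon>0$, a compact $K_0$ with $\|\widetilde T-K_0\|\leq\|\pi_{\mathcal C}(T)\|+\varepsilon$, and then $\lim_{\mathcal U}\|\widetilde T\xi_k\|\leq\|\pi_{\mathcal C}(T)\|\,\|\xi\|$ follows immediately. This sidesteps the tree construction you flag as the hard part. Second, your Kaplansky reduction to $a_i,b_i\in\mathbb C[\mathbb F_n]$ is correct but deserves one more word: the point is that the Fell-topology coefficients $\langle x\xi y,\eta\rangle$ are normal in $x,y$ and that one may restrict to bounded (e.g.\ subtracial) vectors in $\mathcal H$, so $\|\cdot\|_2$-approximation of $x,y$ with controlled operator norm suffices on both sides.
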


\begin{proof}
    The proof uses Popa's malleable deformation of $M$ into $\widetilde M=M*M$. More precisely, by \cite{Po86,Po06} there exist automorphisms $(\alpha_t)_{t\in\mathbb R}$ of $\widetilde M$ such that
    \begin{enumerate} 
    \item $\lim_{t\rightarrow 0}\|\alpha_t(x)-x\|_2=0$, for every $x\in  M$, and 
    \item the map $M\ni x\mapsto \text{E}_M(\alpha_t(x))\in M$ extends to a compact operator on $\text{L}^2(M)$, for any $t\not=0$.
    \end{enumerate}
    
    Let $F\subset (M)_1$ be a finite set, $\xi\in\text{L}^2(M^{\mathcal U})\ominus \text{L}^2(M)$ with $\|\xi\|_2\leq 1$, and $\varepsilon>0$. Let $\eta\in M^{\mathcal U}\ominus M$ such that $\|\xi-\eta\|_2<\frac{\varepsilon}{4}$ and $\|\eta\|_2\leq 1$.
    Then \begin{equation}\label{approxim} \text{$|\langle x\eta y,\eta\rangle-\langle x\xi y,\xi\rangle|< \frac{\varepsilon}{2}$, for every $x,y\in F$.}
    \end{equation}
  By (1), we can find $t>0$  such that $\|\alpha_t(x)-x\|_2<\frac{\varepsilon}{4(\|\eta\|+1)}$, for every $x\in F$. Since $F\subset (M)_1$ and $\|\eta\|_2\leq 1$, we get that
   \begin{equation}\label{approx2}
       \text{$|\langle\alpha_t(x)\eta\alpha_t(y),\eta\rangle-\langle x\eta y,\eta\rangle|\leq (\|\alpha_t(x)-x\|_2+\|\alpha_t(y)-y\|_2)\|\eta\|<\frac{\varepsilon}{2}$, for every $x,y\in F$.}
   \end{equation}
   Write $\eta=(\eta_n)$, where $(\eta_n)\subset M$ is a sequence with $\sup\|\eta_n\|<\infty$. Since $\eta\in M^{\mathcal U}\ominus M$, we have that $\lim_{n\rightarrow\mathcal U}\eta_n=0$, weakly. Then (2) implies that $\lim_{n\rightarrow\mathcal U}\|\alpha_{-t}(\eta_n)\|_2=0$.
   Thus, denoting $\zeta_n=\alpha_{-t}(\eta_n)-\text{E}_M(\alpha_{-t}(\eta_n))\in\widetilde M\ominus M$ we derive that for every $x,y\in M$ we have that
   \begin{equation}\label{approx3}
\text{$\langle\alpha_t(x)\eta\alpha_t(y),\eta\rangle=
\lim_{n\rightarrow\mathcal U}\langle\alpha_t(x)\eta_n\alpha_t(y),\eta_n\rangle=\lim_{n\rightarrow\mathcal U}\langle x\alpha_{-t}(\eta_n)y,\alpha_{-t}(\eta_n)\rangle=\lim_{n\rightarrow\mathcal U}\langle x\zeta_ny,\zeta_n\rangle$.}
   \end{equation}
   By combining \eqref{approxim}, \eqref{approx2} and \eqref{approx3}, we deduce that there is $n\in\mathbb N$ such that $\zeta=\zeta_n\in \widetilde M\ominus M$ satisfies $|\langle x\zeta y,\zeta\rangle-\langle x\xi y,\xi\rangle|<\varepsilon$, for every $x,y\in F$. Since the $M$-bimodule $\text{L}^2(\widetilde M)\ominus\text{L}^2(M)$ is isomorphic to $(\text{L}^2(M)\otimes\text{L}^2(M))\otimes\ell^2(\mathbb N)$, the conclusion follows.
\end{proof}

\begin{remark}
We are grateful to Changying Ding and Jesse Peterson for informing us that in a forthcoming paper they define a notion of biexactness for tracial von Neumann algebras $M$, and prove that the condition $\text{L}^2(M^{\mathcal U})\ominus\text{L}^2(M)\subset_{\text{weak}}\text{L}^2(M)\otimes\text{L}^2(M)$ from Lemma \ref{freegroup} is equivalent to the W$^*$AO property defined in \cite[Definition 2.1]{Ca22} and is implied by biexactnenss.
\end{remark}

\subsection{Proof of Theorem \ref{hap}}


Let $M$ be a separable II$_1$ factor. 

Assume first that $M$ has Haagerup's property. By applying \cite[Theorem 9]{OOT15} or \cite[Theorem 3.4]{BF07} we derive the existence of a strictly mixing $M$-bimodule $\mathcal M$ such that $\text{L}^2(M)\subset_{\text{weak}}\mathcal H$.  
By using Lemma \ref{subtracial} and after replacing $\mathcal H$ with $\mathcal H^{\oplus\infty}$, we get a strictly mixing $M$-bimodule $\mathcal H$ admitting a sequence of subtracial vectors $(\eta_n)$ such that $\lim_n\langle x\eta_n y, \eta_n\rangle=\tau(xy)$, for every $x,y\in M$. 
Next, note that the $M$-bimodule $\mathcal K:=\mathcal H\otimes_M\overline{\mathcal H}$ is symmetric, as witnessed by the involution $J(\xi\otimes_M\overline{\zeta})=\zeta\otimes_M\overline{\xi}$, and strictly mixing (see \cite[Proposition 7]{OOT15}). Then the vectors $\xi_n:=\eta_n\otimes_M\overline{\eta}_n\in\mathcal K$ are subtracial and satisfy $\lim_n\langle x\xi_n y, \xi_n\rangle=\tau(xy)$, for every $x,y\in M$. 

Let $\widetilde M=\Gamma(M,\mathcal K)''$. Then $\widetilde M$ is a factor (see \cite{KV15}, Theorem 5.1). We will prove that $\widetilde M$ has the desired properties. First, note that the $M$-bimodule $\text{L}^2(\widetilde M)\ominus\text{L}^2(M)$  is isomorphic to $\mathcal F_M(\mathcal K)\ominus\text{L}^2(M)$ and thus to $\mathcal K\otimes_M\mathcal F_M(\mathcal K)$. Since $\mathcal K$ is strictly mixing, so is  $\text{L}^2(\widetilde M)\ominus\text{L}^2(M)$ by  \cite[Proposition 7]{OOT15}. Second, if $\Omega=1\in\text{L}^2(M)\subset\mathcal F_M(\mathcal K)$ is the vacuum vector, then $(xs(\xi_n)-s(\xi_n)x)\Omega=x\xi_n-\xi_n x$, for every $n$, and thus $$\text{$\lim_n\|x s(\xi_n)-s(\xi_n)x\|_2=\lim_n\|(xs(\xi_n)-s(\xi_n)x)\Omega\|=\lim_n\|x\xi_n-\xi_n x\|=0$, for every $x\in M$.}$$
Since $\xi_n$ is subtracial, $\langle\xi_n,\xi_n\rangle_M\leq 1$ and so $\|s(\xi_n)\|\leq 2$, for every $n$. Thus, $(s(\xi_n))\in M'\cap\widetilde{M}^{\mathcal U}$. 
Since $\text{E}_M (s(\xi_n)) = 0$ and $\lim_n ||s(\xi_n)||_2 =\lim_n ||s(\xi_n)\Omega|| = \lim_n\|\xi_n\|=1$, we derive that $(s(\xi_n)) \notin M^{\mathcal U}$.

Conversely, assume that there exists a separable II$_1$ factor $\widetilde M\supset M$ such that the $M$-bimodule $\text{L}^2(\widetilde M)\ominus\text{L}^2(M)$ is strictly mixing  and  $M'\cap \widetilde M^{\mathcal U}\not\subset M^{\mathcal U}$. Let $y=(y_n)\in (M'\cap\widetilde M^{\mathcal U})\setminus M^{\mathcal U}$. 
Then $z=y-\text{E}_{M^{\mathcal U}}(y)\not=0$ and $z=(z_n)$, where $z_n=y_n-\text{E}_M(y_n)\in\text{L}^2(\widetilde M)\ominus\text{L}^2(M)$. Since $z\in M'\cap \widetilde M^{\mathcal U}$, we get that $\lim_{n\rightarrow\mathcal U}\|xz_n-z_nx\|_2=0$, for every $x\in M$. Since $M$ is  a factor, and $\text{E}_M(zz^*)\in M'\cap M$, we get that $\text{E}_M(zz^*)=\tau(zz^*)1$. Thus, for every $x\in M$ we have that $$\lim_{n\rightarrow\mathcal U}\langle xz_n,z_n\rangle=\lim_{n\rightarrow\mathcal U}\tau(xz_nz_n^*)=\tau(xzz^*)=\tau(x\text{E}_M(zz^*))=\tau(x)\tau(zz^*).$$ It follows that if we let $\zeta_n=\tau(zz^*)^{-\frac{1}{2}}z_n\in\text{L}^2(\widetilde M)\ominus\text{L}^2(M)$, then $\lim_{n\rightarrow\mathcal U}\|x\zeta_n-\zeta_n x\|_2=0$ and $\lim_{n\rightarrow\mathcal U}\langle x\zeta_n,\zeta_n\rangle=\tau(x)$, for every $x\in M$. This implies that $\text{L}^2(M)\subset_{\text{weak}}\text{L}^2(\widetilde M)\ominus\text{L}^2(M)$. Since the $M$-bimodule $\text{L}^2(\widetilde M)\ominus\text{L}^2(M)$ is also strictly mixing, 
applying \cite[Theorem 9]{OOT15} or \cite[Theorem 3.4]{BF07} gives that $M$ has Haagerup's property. \hfill$\square$ 



\end{document}